\documentclass[oneside, 12pt]{amsart}
\usepackage{amsmath}
\usepackage{amssymb}
\usepackage{amsbsy}
\usepackage{color}
\usepackage[dvipsnames,x11names,svgnames]{xcolor}
\usepackage[hidelinks]{hyperref}
\usepackage{amsthm}
\usepackage{amscd}
\usepackage[margin=1in]{geometry}
\usepackage{mathrsfs}
\IfFileExists{dsfont.sty}{\usepackage{dsfont}}{}
\usepackage{mathtools}
\usepackage{enumitem}
\usepackage{tikz-cd}
\usepackage{cleveref}
\usepackage{graphicx}
\usepackage{lscape}

\newtheorem{thm}{Theorem}

\newtheorem{prop}[thm]{Proposition}

\newtheorem{lem}[thm]{Lemma}
\newtheorem{cor}[thm]{Corollary}
\crefname{thm}{theorem}{theorems}
\Crefname{thm}{Theorem}{Theorems}
\crefname{prop}{proposition}{propositions}
\Crefname{prop}{Proposition}{Propositions}
\crefname{lem}{lemma}{lemmas}
\Crefname{lem}{Lemma}{Lemmas}
\crefname{cor}{corollary}{corollaries}
\Crefname{cor}{Corollary}{Corollaries}
\theoremstyle{definition}
\newtheorem*{rem}{Remark}

\newcommand{\dt}{\, \textup{d}t}

\newcommand{\be}{\begin{equation*}}
	\newcommand{\ee}{\end{equation*} }

\newcommand{\ba}{\begin{align*}}
	\newcommand{\ea}{\end{align*}}

\newcommand{\ben}{\begin{equation}}
	\newcommand{\een}{\end{equation} }

\newcommand{\bs}{\begin{split}}
	\newcommand{\es}{\end{split}}

\newcommand{\bmu}{\begin{multline*}}
	\newcommand{\emu}{\end{multline*}}

\newcommand{\bmun}{\begin{multline}}
	\newcommand{\emun}{\end{multline}}

\author[M.~V.~Hagen]{Markus Valås Hagen}
\address{Department of Mathematical Sciences, Norwegian University of Science and Technology (NTNU), 7491 Trondheim, Norway} 
\email{markus.v.hagen@ntnu.no}

\thanks{Research supported in part by Grant 334466 of the Research Council of Norway.}

\subjclass{11M06, 11M35}

\begin{document}
	
	\title{Sharp unconditional moment bounds for products of $L$-functions}
	
	\begin{abstract}
		We investigate the mixed moments of the products of irreducible $\textup{GL}(1)$ and $\textup{GL}(2)$ $L$-functions on the critical line, and establish sharp moment bounds for these quantities. In particular this establishes the order of these mixed moments in certain ranges. This determines unconditionally, for the first time, the order of magnitude of the first moment of cubic Dedekind zeta functions with negative discriminant.  The proof of the lower bound introduces a new method for computing mixed moments, that is a mix of the classical method of Heath--Brown with the modern machinery of Harper, Heap, Radziwi\l\l\, and Soundararajan. 
	\end{abstract}
	\maketitle
	
	\section{Introduction}
	In analytic number theory, the study of moments and its ramifications is one of the most central ones. In general it refers to averaging some object $I(f)$ over some family $\mathcal{F}$, often in the following form $\frac{1}{|\mathcal{F}|}\sum_{f \in \mathcal{F}} |I(f)|^{2k}$. One of the initial main motivations for studying these objects is that they provide good distributional information for the $L$-functions in question. In this paper we are interested in this study in the setting of mixed moments of $L$-functions on the critical line.
	
	More specifically, let $L_1,\dots,L_r$ be distinct $L$-functions with degrees $d_1,\dots,d_r \in \{1,2\}$. If $d_i = 1$, we allow $L_i$ to be either the Riemann zeta function $\zeta(s)$ or a Dirichlet $L$-function $L(s,\chi)$ associated to a primitive Dirichlet character. If $d_i=2$, $L_i$ can be any $L$-function $L(s,f)$ where $f$ is a primitive holomorphic Hecke cusp form of integral weight $\kappa \geq 1$ and unitary nebentypus. The mixed moments we are interested in take the form
	\begin{equation}\label{mixedMomentDefinition}
		I_{\boldsymbol k}(T)=I_{k_1,\ldots,k_r}(T)
		:=\int_T^{2T}|L_1(\tfrac12+it)|^{2k_1}\cdots |L_r(\tfrac12+it)|^{2k_r}\dt,
	\end{equation}
	where $\boldsymbol{k} = (k_1,\dots,k_r)$ is fixed and $k_1,\ldots,k_r>0$.

	To elaborate more on the statistical interpretation of moments mentioned above, it is illustrative to look at how moments shed light on the independence between $L$-functions. Indeed they can be used to study the large deviation regime of the distributional function $$\Phi_T(V_1,\dots,V_r) = \frac{1}{T}\textup{meas}\left( t \in [1,T] : \frac{\log|L_j(\tfrac12+it)|}{\sqrt{\frac12 \log \log T}}\geq V_j, 1\leq j \leq r\right)$$ in the large deviations regime $V_j \asymp \sqrt{\log\log T}$, like in \cite[Theorem 1.3]{Hagen1}. 
	
	If one is willing to assume the Grand Riemann Hypothesis (GRH) and the Generalized Ramanujan--Petersson conjecture, the order of $I_{\boldsymbol{k}}(T)$ is known for any $\boldsymbol{k}\in \mathbb{R}^r_{>0}$: $$I_{\boldsymbol{k}}(T)\asymp T(\log T)^{k_1^2+\dots+k_r^2}$$ where each of $L_1,\dots,L_r$ is allowed to be any irreducible cuspidal automorphic $L$-function with unitary character. This result is due to the author \cite{Hagen1}, and sharpens the earlier result $I_{\boldsymbol{k}}(T)\ll T(\log T)^{k_1^2+\dots+k_r^2+\varepsilon}$ of Milinovich--Turnage-Butterbaugh \cite{MilinoButter1}. 
	
	In the unconditional case however, the range in which the results above are valid is more limited. For asymptotics, there are only a handful of cases. Indeed, if the exponents are equal, the only known case in which the asymptotic is known is Motohashi's asymptotic for a second moment of a quadratic field \cite{Motohashi1}. This can also be generalized slightly to a product of two irreducible Dirichlet $L$-functions if one follows the argument of Heap's PhD Thesis \cite{Heap2}. As far as the author can see, no upper bound up to order has been recorded beyond these two examples.
	
	For lower bounds, the story is a bit more positive. For positive
	rational $k_j$, sharp lower bounds follow from the general theorem
	of Akbary--Fodden \cite{AkbaryFodden1}, which extends Heath--Brown's
	method. The rationality restriction comes from this method:
	extending the argument directly to arbitrary real exponents
	encounters difficulties in choosing analytic branches in the
	presence of zeros. Assuming GRH removes this obstruction,
	but we are seeking unconditional bounds.
	
	For common real exponents $k>1$, Sono \cite{Sono1} obtained sharp
	lower bounds for Dedekind zeta functions using the method of
	Radziwi\l\l--Soundararajan \cite{RS1}. For the Riemann zeta
	function, Heap--Soundararajan \cite{HeapSound1} obtained
	unconditional sharp lower bounds for every fixed real $k>0$. The next two theorems extend these results in the setting of mixed moments. The purpose of this paper is to prove them.
	
	\begin{thm}\label{mainThm1}
		For any fixed $k_1,\ldots,k_r>0$ such that $d_1k_1+\dots+d_rk_r \leq 2$,
		\[
		I_{k_1,\dots,k_r}(T) \ll
		T(\log T)^{k_1^2+\dots+k_r^2}.
		\]
	\end{thm}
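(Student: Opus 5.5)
We will follow the unconditional upper bound method of Heap--Radziwi\l\l--Soundararajan (for moments of $\zeta$ with $0\le k\le 2$, using the second moment as the input), adapted to the mixed setting. The key observation is that the constraint $d_1k_1+\dots+d_rk_r\le 2$ is precisely what makes the relevant ``twisted second moment'' input available. Writing $\Lambda(s)=\prod_j L_j(s)$, the product $\prod_j |L_j|^{2k_j}$ will be compared with $|\mathcal{L}(\tfrac12+it)|^{2}$ times a short Dirichlet polynomial, where $\mathcal{L}$ is a product of the $L_j$ of total degree at most $2$. Concretely, write
\[
\prod_j |L_j(\tfrac12+it)|^{2k_j} = \Bigl(\prod_j |L_j|^{2\theta_j}\Bigr)\cdot \prod_j |L_j|^{2(k_j-\theta_j)} ,
\]
and apply H\"older with exponents chosen so that one factor is a genuine second moment of an object of degree $\sum_j d_j\lambda_j\le 2$. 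Such objects are, e.g., $\zeta$, $L(s,\chi)$, products of two degree-one functions, or a single $L(s,f)$. Twisted second moments of these with short Dirichlet polynomials of length $T^{\vartheta}$ are known unconditionally: Balasubramanian--Conrey--Heath-Brown type results for degree $\le 2$ products of $\textup{GL}(1)$ functions, and results of Good and Blomer--Harcos type for $L(s,f)$ twisted by short polynomials. The other factor is controlled by Euler-product-type Dirichlet polynomials.

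The steps, in order, are as follows.

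\textbf{Step 1 (mollifier-type decomposition).} Following Harper and Heap--Radziwi\l\l--Soundararajan, let $T^{\beta_0}, T^{\beta_1},\dots$ be the usual increasing sequence of scales with $\beta_j=e^{j}\beta_0$. For each $t$, define $\mathcal{P}_{i,j}(t)=\sum_{T^{\beta_{i-1}}<p\le T^{\beta_i}} a_j(p)p^{-1/2-it}$ with $a_j(p)$ the Hecke coefficients of $L_j$. Split $[T,2T]$ according to the first index $i$ at which some combined sum $\sum_j k_j\,\mathrm{Re}\,\mathcal{P}_{\ell,j}$ is large, for some $\ell\ge i$. On the good set, the key pointwise inequality is the unconditional HRS bound
\[
|L_j(\tfrac12+it)|^{2\cdot\frac{k_j}{K}} \lesssim |L_j|^{2}\cdot\Bigl|\prod_{\ell\le i}\mathcal{N}_{\ell,j}(t)\Bigr|^{2(\frac{k_j}{K}-1)}\!\!\cdot(\text{positive terms}),
\]
with $K=\sum d_jk_j/2\le 1$. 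This bound comes from Young's inequality $x^{a}y^{b}\le a x+ b y$ with $a+b=1$, applied to $|L_j|^2|\mathcal{N}|^{-2(1-\theta)}$ and a truncated exponential. Here $\mathcal{N}_{\ell,j}$ is the truncated Taylor series of $\exp(-\mathcal{P}_{\ell,j})$, i.e.\ a short mollifier. Multiplying over $j$, and using $\sum_j d_j k_j\le 2$ to ensure the exponent weights sum to at most one, we bound the integrand by a linear combination of
\[
\Bigl|\mathcal{L}(\tfrac12+it)\Bigr|^2\,\Bigl|\prod_{\ell\le i}\prod_j \mathcal{N}_{\ell,j}(t)^{\,\cdot}\Bigr|^2
\]
and of purely Dirichlet-polynomial terms $\prod_{\ell}\prod_j|\exp(k_j\mathcal{P}_{\ell,j})|^2$ truncated.

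\textbf{Step 2 (twisted second moments).} Evaluate the mean value of $|\mathcal{L}|^2$ times the square of the short Dirichlet polynomial, which has length $T^{\beta_i\cdot O(1)}$, small. When $\mathcal{L}=L_aL_b$ is a product of two degree-one functions, use the twisted second-moment formula for products of two Dirichlet $L$-functions (Motohashi/Heap). When $\mathcal{L}=L(s,f)$, use the twisted second moment of a $\textup{GL}(2)$ $L$-function with short twists. When $\mathcal{L}$ is a single $\zeta$ or $L(s,\chi)$, use BCHB. The main term is a bilinear form in the coefficients of the Dirichlet polynomial with a kernel of the form $\sum_{hm=kn}\cdots$. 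Inserting the multiplicative structure, this mean value factors over primes and yields $T(\log T)^{\sum_j k_j^2}\cdot e^{O(\text{error in }i)}$. The needed inputs here are Selberg orthogonality $\sum_{p\le x}a_j(p)\overline{a_{j'}(p)}/p=\delta_{jj'}\log\log x+O(1)$ (using Rankin--Selberg) and $|a_j(p)|^2$ averages. These produce the exponent $\sum k_j^2$: the cross terms vanish by distinctness.

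\textbf{Step 3 (bad sets and summation over $i$).} On the set where $\mathcal{P}_{\ell}$ is large, use Chebyshev with a high moment of $\mathcal{P}_\ell$, together with the mean value theorem for Dirichlet polynomials, to get a saving $e^{-c/\beta_{i}}$. This compensates the factor $\beta_i^{-O(1)}$ lost by truncating at scale $T^{\beta_i}$. Summing over $i$ then gives the theorem.

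\textbf{Main obstacle.} The hardest point is Step 2 in the degree-$2$ cases. The problem is an unconditional twisted second moment with uniformly controlled error for twists of length $T^{\vartheta}$, with $\vartheta$ small but fixed. This is needed for the cusp form case and for products $L_aL_b$ of distinct Dirichlet $L$-functions, including extracting the main term's off-diagonal structure. I would first try to reduce it to known results: for $L(s,f)$, the twisted second moment with twist length $T^{\vartheta}$ via the approximate functional equation and a shifted convolution bound (Blomer); for $L_aL_b$, Heap's thesis argument. If only the diagonal is obtained, an upper bound still suffices, since we need only $\ll$.
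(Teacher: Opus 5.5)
\textbf{The gap: the inputs you name cannot reach the stated range.} In your H\"older step the total weight is at most $1$. If a degree-one $L_a$ occurs at most once in each $\mathcal L$, then $|L_a|$ receives total exponent at most $2$. That is the case for every input you actually cite: $\mathcal L=L_a$ via Balasubramanian--Conrey--Heath-Brown, and $\mathcal L=L_aL_b$ with $a\neq b$ via Motohashi/Heap. So you can never produce $|L_a|^{2k_a}$ with $k_a>1$, although the theorem allows $k_a$ up to $2$ for degree-one factors. Already $r=1$, $L_1=\zeta$, $1<k\le 2$ is out of reach, as is $\zeta$, $L(s,\chi)$ with $k_1=3/2$, $k_2=1/4$.

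The missing ingredient is the twisted \emph{fourth} moment of a single degree-one $L$-function. More generally, the needed object is $|L_a|^{4/d_a}$. If your phrase ``products of two degree-one functions'' was meant to include $L_a^2$, note that this is not a Balasubramanian--Conrey--Heath-Brown-type or Motohashi/Heap-type result. For $\zeta$ it is Hughes--Young or Bettin--Bui--Li--Radziwi\l\l{}, and for $L(s,\chi)$ the paper needs an analogous formula.

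The paper uses H\"older weights $\vartheta_a=d_ak_a/2$ on $\int|L_a|^{4/d_a}|\mathcal N\mathcal Q|^2$, and the remaining weight $1-\sum_a d_ak_a/2\ge 0$ on a pure Dirichlet-polynomial factor. This is exactly where $\sum_a d_ak_a\le 2$ enters. Conversely, the input you single out as the main obstacle, twisted second moments of $L_aL_b$ for distinct $a,b$, is never needed. Each $L_i$ is separated by H\"older, and the twist in its factor approximates $\exp\bigl(\tfrac12\sum_a\alpha_a\mathcal P_a\bigr)$, with $\alpha_i=2k_i-4/d_i$ (a mollifier for $L_i$) and $\alpha_a=2k_a$ for $a\neq i$ (amplifiers for the other $L$-functions). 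Rankin--Selberg orthogonality makes the cross covariances and the non-distinguished linear kernel terms $O(1)$, which yields the exponent $k_i^2+\sum_{a\neq i}k_a^2$.

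\textbf{Secondary points that would also need fixing.}

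First, your remark that ``if only the diagonal is obtained, an upper bound still suffices'' is wrong. Off-diagonal terms have no sign, and in the fourth-moment case they contribute main terms (this is the quadratic divisor problem). You need the full twisted-moment formula with a power-saving error.

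Second, the good/bad decomposition must be made factor by factor, as with the paper's first-bad-block vector $\boldsymbol\nu$, not via a single combined sum $\sum_j k_j\Re\mathcal P_{\ell,j}$. Each H\"older factor uses a different linear combination of the $\mathcal P_{\ell,a}$, and the Taylor approximations require every $|\mathcal P_{\ell,a}|$ to be small individually.

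Third, on the bad sets the high power of the bad-block sum must sit inside the twisted moment, not only in a pure mean-value estimate. For the distinguished $L_i$, the linear kernel terms of that block are as large as its variance. The cutoff constant must therefore be taken large enough to beat the resulting factor $\exp\bigl(2\sqrt{\ell|W^+W^-|/V}\bigr)$.
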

	
	\begin{thm}\label{mainThm2}
		For any fixed $k_1,\dots,k_r>0$
		\[
		I_{k_1,\dots,k_r}(T)\gg
		T(\log T)^{k_1^2+\dots+k_r^2}.
		\]
	\end{thm}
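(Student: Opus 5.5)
We prove \Cref{mainThm2} by a Hölder-inequality lower bound argument in the spirit of Heath--Brown, combined with the Harper/Heap--Radziwi\l\l--Soundararajan machinery. That machinery replaces each $|L_j|^{2k_j}$ by a short Euler-product proxy, so that only low moments ever need to be computed unconditionally.

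\textbf{Step 1 (proxies).} Let $P_j(s)=\sum_{p\le T^{\theta}}a_j(p)p^{-s}$ be the prime sum attached to $L_j$. For $\theta$ small, split it into blocks $P_{j,i}$ over ranges $(T^{\theta_{i-1}},T^{\theta_i}]$. For each block set
\[
N_{j,i}(s;\alpha)=\sum_{m\le \ell_i}\frac{(\alpha P_{j,i}(s))^m}{m!},
\]
with even $\ell_i$ and $\ell_i\theta_i$ small. This is a truncated exponential, so $N_j(s;\alpha)=\prod_i N_{j,i}(s;\alpha)$ is a Dirichlet polynomial of length $T^{\varepsilon}$ approximating $\exp(\alpha P_j)$. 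The target mixed quantity is $\mathcal N(s)=\prod_j N_j(s;k_j-1)$.

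\textbf{Step 2 (Hölder).} Apply Hölder in the form
\[
\int_T^{2T}\prod_j L_j(\tfrac12+it)\,\overline{N_j(\tfrac12+it;k_j-1)}\,N_j(\tfrac12+it;k_j)\dt \le I_{\boldsymbol k}(T)^{a}\,J_1^{b}\,J_2^{c}.
\]
Here $J_1,J_2$ are integrals of products of powers of $|N_j(\cdot;k_j)|$ and $|N_j(\cdot;k_j-1)|$ alone. The exponents are chosen, as in Heap--Soundararajan, so that the $|L_j|$ exponent in the first factor is exactly $2k_j$ and the remaining factors contain no $L$-function. In the mixed setting we take a single common Hölder split $(\frac1{2k_{\max}}$-type$)$ across all $j$, compensating with the factors $N_j(\cdot;k_j)$ and $N_j(\cdot;k_j-1)$ per index. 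If no common exponent works, we instead use a Hölder inequality with several factors, one grouping per $j$. Since each $|N_{j,i}|$ is, up to negligible error, $\exp(\mathrm{Re}\,\alpha P_{j,i})$ unless $|P_{j,i}|$ is large, the integrals $J$ are moments of short Dirichlet polynomials. We compute them by mean value theorems for Dirichlet polynomials (Montgomery--Vaughan), factoring over the blocks as in Radziwi\l\l--Soundararajan. For distinct $L_j$ the cross terms are controlled by Rankin--Selberg orthogonality $\sum_{p\le x}a_i(p)\overline{a_j(p)}/p=\delta_{ij}\log\log x+O(1)$, which is unconditional for $\textup{GL}(1)\times\textup{GL}(2)$ pairs. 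This yields $J\ll T(\log T)^{\sum k_j^2}$, with the correct exponents.

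\textbf{Step 3 (twisted first moment).} The left-hand integral needs a lower bound $\gg T(\log T)^{\sum k_j^2}$. Expand $\prod_j \overline{N_j(\cdot;k_j-1)}N_j(\cdot;k_j)$ as a Dirichlet polynomial $\sum_{n\le T^{\varepsilon}} b(n)n^{-it}$. Use the approximate functional equation for each $L_j$, or simply for $L_j$ truncated to length $T$ since the integrand is a product of first powers. The main contribution comes from the diagonal: the coefficients of $\prod_j L_j$ (a product of degrees $\sum d_j$) are matched against $b(n)$. Off-diagonal terms are small because the total length is $T^{\sum d_j/2+\varepsilon}$; when this is too long, we treat each $L_j$ only to its own conductor scale, as in Akbary--Fodden. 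The diagonal evaluates multiplicatively block by block to $\asymp \prod_j(\log T)^{k_j^2}$, because $\exp(P_j)\exp((k_j-1)\overline{P_j})\exp(k_jP_j)$ pairs with the $L_j$ coefficients to give $\exp\bigl(\sum k_j |a_j(p)|^2/p\bigr)$-type sums.

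\textbf{Main obstacle.} The main obstacle is Step 3 for several $L$-functions of total degree greater than $2$. The unconditional twisted moment of a product $\prod_j L_j$ is not available in general, so we would need a device such as splitting $\prod_j L_j$ into a short mollified main part plus an error bounded by Hölder with lower moments. The clean Hölder bookkeeping in Step 2 for unequal $k_j$, including $k_j<1$, is secondary but also delicate.
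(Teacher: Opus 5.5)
\paragraph{Step 2 fails.} Your argument breaks at Step 2, and the problem is structural rather than bookkeeping. The left side contains $\prod_j|L_j|$, while $I_{\boldsymbol k}(T)^a$ supplies $\prod_j|L_j|^{2k_ja}$. If $J_1,J_2$ contain no $L$-function, you need $2k_ja=1$ for every $j$, and H\"older needs $a\le1$. So Step 2 as written covers only $k_1=\dots=k_r\ge\frac12$.

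Otherwise the leftover powers $|L_j|^{1-2k_ja}$ must be absorbed by the only unconditional inputs. These are the twisted moments $\int|L_j|^{4/d_j}|D_j|^2$: fourth moments in degree one, second moments in degree two. They enter with H\"older weights $\frac{d_j}{4}(1-2k_ja)$. The weight left for the pure Dirichlet-polynomial factor is then $(1-\frac14\sum_jd_j)+a(\frac12\sum_jd_jk_j-1)$. This is negative as soon as $\sum_jd_j>4$ and $\sum_jd_jk_j\le2$, for example five Dirichlet $L$-functions with small exponents. In that range, measured in degree-one units, the first moment of the product has order $\sum_jd_j$. That exceeds both the target order $2\sum_jd_jk_j$ and the order $4$ of every available twisted moment, so H\"older cannot interpolate.

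The missing idea is to lower the left side to a weighted fractional moment $\int|F|^{1/n}|\mathcal N|^2\,dt$, with $F=\prod_jL_j$ and $n$ a large integer. H\"older then has total weight $n$, and the pure-polynomial factor receives a positive weight.

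\paragraph{What replaces Step 3.} That fractional moment is not holomorphic, so your Step 3 no longer applies. The paper bounds it below by Heath--Brown's convexity argument. Gabriel's theorems are applied to $B=F\mathcal N^{2n}$. The root $F^{1/(2n)}\mathcal N$ is expanded as a Dirichlet series on $\Re s>1$, with prime coefficients $\sum_ac_a\lambda_a(p)$, and compared with its truncation to length $T^{1/2}$. Only coefficient sums and Montgomery--Vaughan are needed.

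In addition, the H\"older inequality is run only on the set where every prime block is small, which is where $|\mathcal N(\cdot;\alpha)|^c\approx|\mathcal N(\cdot;c\alpha)|$ holds. One must then show that the complement contributes at most half of the fractional moment. This uses the individual twisted moments together with the high-power polynomials attached to the first bad block. Your proposal never addresses this exceptional set.

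\paragraph{Other remarks.} The obstacle you single out is not actually one. With a smooth weight and the approximate functional equation for $F$, the first moment of $F$ twisted by a polynomial of length $T^{\varepsilon}$ has negligible off-diagonal and dual terms in any degree. This is what the Radziwi\l\l--Soundararajan method relies on. Finally, your conjugates are swapped. The product $L_j\overline{N_j(\cdot;k_j-1)}N_j(\cdot;k_j)$ has diagonal of size $(\log T)^{k_j^2-1}$; you want $L_jN_j(\cdot;k_j-1)\overline{N_j(\cdot;k_j)}$.
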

	
	In particular, the order $$I_{k_1,\dots,k_r}(T)\asymp T(\log T)^{k_1^2+\dots+k_r^2}$$ is established for $d_1k_1+\dots+d_rk_r \leq 2$.
	
	\begin{rem}
		While this manuscript was in its final preparation, a noteworthy result was announced by Gun, Kumar and Thakur \cite{GunKumarThakur} on arxiv. They partially recover, and improve our lower bound theorem. In particular they establish sharp lower bounds for the moment $$\int_T^{2T} |L(\tfrac12+it,\pi)|^{2k} \dt$$ for any $k>0$, where $\pi$ is an irreducible unitary cuspidal
		automorphic representation of degree at most four. For higher degree automorphic representations it assumes Hypothesis H of Rudnick--Sarnak.  Their theorem also covers mixed moments whose exponents have rational
		ratios, extending the rational-exponent results of
		Akbary--Fodden \cite{AkbaryFodden1}, whereas
		Theorem~\ref{mainThm2} allows arbitrary positive real exponents.
	\end{rem}

	As a nice consequence of our main theorems we are able to obtain the order of the moment of some Dedekind zeta functions. The next corollary presents this.
	
	\begin{cor}\label{dedekindZetaCorollary}
		Let $K$ be a number field with degree $n=[K:\mathbb{Q}]$. Let $M$ be the Galois closure of $K$ and put $G=\textup{Gal}(M/\mathbb{Q})$ and $H=\textup{Gal}(M/K)$. Assume that every  irreducible constituent of the representation $\textup{Ind}_{H}^G \mathbf{1}_H$ is either one-dimensional or has dimension two and is such that its Artin $L$-function $L(s,\rho)$ equals $L(s,f_\rho)$ for some primitive holomorphic weight-one form. Write $$\textup{Ind}_{H}^G \mathbf{1}_H = \bigoplus_{j=1}^{\eta} \rho_j^{m_j}, \qquad \mathfrak{P}_K = \sum_{j=1}^{\eta} m_j^2.$$  Then
		\[
		\int_T^{2T}|\zeta_K(\tfrac12+it)|^{2k}\,dt
		\asymp T(\log T)^{\mathfrak P_Kk^2}
		\]
		whenever
		$0<k\leq2/[K:\mathbb Q]$.
		The lower bound is unconditional for every $k>0$.
	\end{cor}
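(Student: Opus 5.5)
The corollary follows from Theorems~\ref{mainThm1} and~\ref{mainThm2} once $\zeta_K$ is written as a product of distinct $L$-functions of the admissible type. By the Artin formalism,
\[
\zeta_K(s)=L(s,\textup{Ind}_H^G\mathbf 1_H)=\prod_{j=1}^{\eta}L(s,\rho_j)^{m_j},
\]
and therefore
\[
|\zeta_K(\tfrac12+it)|^{2k}=\prod_{j=1}^{\eta}|L(\tfrac12+it,\rho_j)|^{2km_j}.
\]
We take $L_j=L(s,\rho_j)$ and $k_j=km_j>0$.

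We first check that the $L_j$ are admissible. A one-dimensional $\rho_j$ is a Dirichlet character by class field theory over $\mathbb Q$. After passing to the primitive character inducing it, $L(s,\rho_j)$ is either $\zeta(s)$ (for the trivial constituent, which occurs with $m=1$) or $L(s,\chi)$ with $\chi$ primitive. Changing the finitely many Euler factors alters $|L(\tfrac12+it)|$ only by a factor bounded above and below, since each local factor at $\sigma=\tfrac12$ is $\asymp 1$. For a two-dimensional $\rho_j$, the hypothesis gives $L(s,\rho_j)=L(s,f_{\rho_j})$ with $f_{\rho_j}$ a primitive holomorphic weight-one newform whose nebentypus is $\det\rho_j$, which is unitary. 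The constituents $\rho_j$ are pairwise non-isomorphic irreducibles, so the $L_j$ are distinct. The Artin $L$-functions of distinct irreducible characters are distinct, as their Dirichlet coefficients at primes determine the characters by Chebotarev and orthogonality.

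Next we compute the exponents. We have
\[
\sum_j k_j^2=k^2\sum_j m_j^2=\mathfrak P_Kk^2.
\]
For the size condition, let $d_j=\dim\rho_j\in\{1,2\}$. Then
\[
\sum_j d_jk_j=k\sum_j m_j\dim\rho_j=k\dim\textup{Ind}_H^G\mathbf 1_H=k[G:H]=kn.
\]
The condition $d_1k_1+\dots+d_rk_r\le 2$ of Theorem~\ref{mainThm1} is thus exactly $k\le 2/n$, and in that range Theorem~\ref{mainThm1} gives the upper bound $\ll T(\log T)^{\mathfrak P_Kk^2}$. Theorem~\ref{mainThm2} has no size restriction, so it gives the matching lower bound for every $k>0$.

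The only step needing care is the identification of each factor with one of the $L$-functions allowed in the theorems. This means handling imprimitive characters and the Euler factors at ramified primes, which requires a uniform two-sided bound for those factors on the critical line, and confirming that the degree-two factors are genuinely primitive cusp forms, not Eisenstein series. The latter is guaranteed by the irreducibility of $\rho_j$ together with the stated hypothesis.
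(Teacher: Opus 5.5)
Your proposal is correct and is the argument the paper leaves to the reader. You factor $\zeta_K=\prod_j L(s,\rho_j)^{m_j}$ by the Artin formalism, take $k_j=km_j$, check $\sum_j k_j^2=\mathfrak P_Kk^2$ and $\sum_j d_jk_j=k[K:\mathbb Q]$, and then invoke Theorems~\ref{mainThm1} and~\ref{mainThm2}. Your adjustment of ramified Euler factors is harmless but unnecessary, since the Artin $L$-function of a one-dimensional $\rho_j$ already equals the $L$-function of the primitive Dirichlet character attached to it.
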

	
	\begin{proof}
		The proof follows from considering the Artin $L$-function attached to $\textup{Ind}_H^G \mathbf{1}_H$, and some basic algebraic number theory and representation theory. The details are left to the reader.
	\end{proof}
	
	As an example of the corollary above we record the order of the Dedekind zeta function $\zeta_K$ of any cubic field with negative discriminant, $$\int_T^{2T} |\zeta_K(\tfrac12+it)|^{2k}\dt \asymp T(\log T)^{2k^2},$$ for any $0<k\leq 2/3$.  Two remarks are in order.
	
	\begin{rem}
		The lower bound in Corollary \ref{dedekindZetaCorollary} also appears in the very recent preprint by Gun--Kumar--Thakur mentioned above \cite{GunKumarThakur}. In fact they prove it for any Dedekind zeta function $\zeta_K$. 
	\end{rem}

	\begin{rem}
		The corollary also covers cyclic cubic fields. The non-Galois
		cubic case with positive discriminant involves a dihedral Maass
		form in the $L$-function factorization and thus lies outside the scope of this paper.
	\end{rem}

	There are also other interesting consequences of knowing the order of these mixed moments: for example sharp moment bounds for Hurwitz zeta functions of rational parameter (see \cite[Corollary 1.6]{Hagen1}) or variance of arithmetic functions in short intervals (see \cite[Theorem 1.4]{MilinoButter1}). We refer the reader to the mentioned articles and the references therein for more details on these aspects.

	The rest of the introduction is devoted to discussing our proof strategy, possible extensions, and finally a comment on the structure of the paper.
	
	\subsubsection*{The upper bounds strategy.} The proof strategy of Theorem \ref{mainThm1} is a direct adaptation of the method of Harper \cite{Harper1}, Radziwi\l\l--Soundararajan \cite{RS2} and Heap--Radziwi\l\l--Soundararajan \cite{HeapRadSound1}. Expositions of these methods can be found in many places in the literature (see e.g. the proof strategy section in the author's paper \cite{Hagen1} or Heap's paper \cite{Heap3}), and in the interest of keeping the paper as short as possible, we have to decided to not dwell to much on it here. The main difference from the implementation of this method in \cite{HeapRadSound1} is the fact that we are considering several $L$-functions at once here, and so the combinatorics become more cumbersome. 
	
	\subsubsection*{The lower bounds strategy} The proof strategy for Theorem \ref{mainThm2} introduces a novel unconditional method for computing mixed moment bounds. To motivate our method it is illustrative to look at why the Heap--Soundararajan method when adapted naively to our case runs into difficulty. The essential underlying reason is that when several $L$-functions are involved
	and the exponents are small, the argument of Heap--Soundararajan can stop being
	interpolating. Consider three $L$-functions
	and suppose that suitable twisted second-moment estimates
	are available.
	Writing $L_j=L_j(\tfrac12+it)$ and suppressing the Dirichlet polynomials,
	one would try to apply Hölder's inequality in the form
	\[
	\begin{aligned}
		\left|\int_T^{2T}L_1L_2L_3\dt\right|
		&\leq
		\left(\int_T^{2T}
		|L_1|^{2k_1}|L_2|^{2k_2}|L_3|^{2k_3}\dt\right)^\alpha\\
		&\quad{}\times
		\left(\int_T^{2T}|L_1|^2\dt\right)^\beta
		\left(\int_T^{2T}|L_2|^2\dt\right)^\gamma
		\left(\int_T^{2T}|L_3|^2\dt\right)^\delta.
	\end{aligned}
	\]
	There is usually an additional factor involving only Dirichlet
	polynomials. This uses a non-negative Hölder weight, so omitting it
	only makes the conditions easier to satisfy. Even without that factor,
	Hölder requires non-negative weights satisfying
	\[
	\begin{gathered}
		\alpha+\beta+\gamma+\delta=1,\\
		2k_1\alpha+2\beta
		=2k_2\alpha+2\gamma
		=2k_3\alpha+2\delta=1.
	\end{gathered}
	\]
	The last equality comes from balancing out both sides of the inequality, i.e. trying to make it as sharp as possible. Writing $K=k_1+k_2+k_3$, these conditions give
	\[
	\alpha=\frac{1}{2(K-1)},
	\]
	which is negative when $K<1$. The obstruction is that the proposed
	argument is no longer interpolating. Indeed, after taking absolute
	values, identifying $L_1=L_2=L_3$ would require controlling a third
	absolute moment using moments of orders $2K$ and $2$. For small $k_j$,
	both available orders lie below the order on the left.
	
	To restore interpolation, we take inspiration from an unpublished idea
	of Radziwi\l\l, that was communicated to the author by Heap. It is based on Heath--Brown's method \cite{Heath-Brown1}.
	Instead of the first moment above on the left-hand side of the inequality above, we consider a suitably weighted
	version of
	\[
	\int_T^{2T}|L_1L_2L_3|^{1/n}\dt
	\]
	for a sufficiently large fixed integer $n$. Lowering the powers on the
	left leaves room in H\"older's inequality for the additional factor
	involving only Dirichlet polynomials. The required weights can then be
	chosen positive, restoring the interpolation between the target mixed
	moment, the available twisted moments, and the pure Dirichlet-polynomial
	moment. The absolute values allow us to obtain the required lower bound
	for the weighted fractional moment by adapting Heath--Brown's method. On the right-hand side of the inequality we adapt the method of Heap and Soundararajan for the computational aspects. Similar to the upper bound, the combinatorics becomes more cumbersome in this setting compared to the single $L$-function case.
	
	The lowering-mechanism via Heath-Brown also appears in Gun--Kumar--Thakur \cite{GunKumarThakur}, but the motivation is a bit different. In their method it allows the authors to completely circumvent the need for twisted moment formulae. This is essentially what allows them to make a result that is valid for any irreducible unitary cuspidal automorphic representation under the hypotheses stated above. 
	
	A related approach to ours also appears in Sourmelidis \cite{Sourmelidis1},
	who lowers the common exponent so that Hölder's inequality
	can be applied using individual twisted second moments.
	He combines these with the
	resonance method to prove unconditional simultaneous large values
	of arbitrarily many distinct primitive $\mathrm{GL}(1)$ and
	$\mathrm{GL}(2)$ $L$-functions.
	
	It is an interesting question whether the method of Gun--Kumar--Thakur \cite{GunKumarThakur} can be adapted to the mixed-moment case with arbitrary
	positive real exponents. In particular, it would be interesting to see if we could make our lower bound method independent of twisted moment formulae. This would allow us to extend Theorem \ref{mainThm2} to degree three and four, and higher degree on assuming Hypothesis H.
	
	Another interesting direction to look at further is if the methods presented here, perhaps in conjunction with \cite{GunKumarThakur}, can be adapted to remove the GRH-assumption in the recent preprint of Durkan, Karak and Mahatab \cite{DurkanKarakMahatab} where they exhibit sharp lower bounds for mixed moments of Dedekind zeta functions.
	
	\subsubsection*{Structure of the paper}
	We have tried to make the paper as self-contained as possible. For example the reader might find an appendix on Rankin--Selberg convolution, which contains the information that we need in the main body of the paper, but which any expert can safely skip. The structure of the paper is as follows. Section~\ref{holderReductionsSection} isolates the Hölder reductions that prove the two main theorems.  Section~\ref{twistedMomentsSection} records the twisted-moment formulae and the Dirichlet-polynomial estimates needed later.  Section~\ref{momentComputationsSection} constructs the factorwise prime blocks and establishes the analytic mean-value inputs used by the reductions.  The appendices contain the classical twisted-moment identities, combinatorial identities and Rankin--Selberg prime sums.

	\section*{Acknowledgements}
	I am deeply grateful to my supervisors Winston Heap and Kristian Seip for {discussions} and encouragement. This has been work in progress since late 2024. I am grateful to Junxian Li for comments on earlier versions of this paper which led to a significantly more general result. I also want to thank Louis-Pierre Arguin and the number theory group at the University of Oxford, for letting me stay there May--June 2025. The excellent working conditions provided there inspired some parts of this paper. I also want to thank Maksym Radziwi\l\l\, for letting me use his unpublished idea.
	
	The author also acknowledges the use of OpenAI's ChatGPT during the development of portions of the combinatorial parts, and for assisting in writing up technical details of the paper.

	{
		\section{Hölder reductions and proofs of the main theorems}\label{holderReductionsSection}
		
		For both the upper and lower bounds, we begin by applying
		Hölder's inequality to reduce the proof to certain moment computations.
		We call this step a \emph{reduction}.  The three reductions in this
		paper happen in \eqref{upperPointwiseBalancedIdentity},
		\eqref{caseAHolder}, and \eqref{caseBHolder}.
		We first introduce the prime-block construction and the Taylor
		approximation used in both proofs.
		
		\subsection{Prime-block notation and Taylor approximation}
		
		Write \(\boldsymbol L=(L_1,\ldots,L_r)\) for the fixed tuple of $L$-functions from the
		introduction and
		\[
		L_a(s)=\sum_{n\geq1}\frac{\lambda_a(n)}{n^s}.
		\]
		Thus \(\lambda_a(p)\) is respectively \(1\), \(\chi_a(p)\), or
		\(\lambda_{f_a}(p)\), according as \(L_a\) is zeta, a primitive
		Dirichlet \(L\)-function, or a primitive holomorphic cusp-form
		\(L\)-function.  We omit from every prime sum the finite set of primes
		ramified in at least one member of \(\boldsymbol L\).\footnote{
			Here a prime is called ramified if it divides the modulus
			of a Dirichlet character or the level of a cusp form
			occurring in $L_1,\ldots,L_r$.}  Put
		\[
		d_a=\deg L_a\in\{1,2\},\qquad
		\kappa(\boldsymbol x)=\sum_{a=1}^r x_a^2,
		\]
		and denote by \(\boldsymbol e_i\) the \(i\)th coordinate vector and by
		\(\boldsymbol 1=(1,\ldots,1)\).
		
		By a \emph{Taylor parameter} we mean a real number $\alpha$
		occurring as the second argument of
		$\mathcal N_{a,j}(s;\alpha)$.\footnote{As we will see shortly,
			$\mathcal N_{a,j}(s;\alpha)$ is a truncation of the Taylor series
			for $\exp(\alpha\mathcal P_{a,j}(s)/2)$, which explains the name.}
		For each reduction, $\mathscr A$ denotes the finite set of Taylor
		parameters used.  We specify this set before choosing the constants
		and forming the prime blocks.
		
		The remaining constants are chosen in the following order.
		First fix $D_0\geq1$, depending only on $\boldsymbol L$, such that,
		for the blocks $(T_{j-1},T_j]$ defined a few lines below, uniformly in $j$,
		\begin{equation}\label{unweightedVarianceComparison}
			\sum_{T_{j-1}<p\leq T_j}\frac1p
			\leq D_0\left(1+\max_a
			\sum_{T_{j-1}<p\leq T_j}\frac{|\lambda_a(p)|^2}{p}\right).
		\end{equation}
		The existence of such a $D_0$ follows from Mertens' theorem
		and \Cref{rankinSelbergPrimeSumsProposition}.
		
		Next choose the constant $C_{\rm cut}$ with
		\begin{equation}\label{baseBadCutoffChoice}
			C_{\rm cut}\geq4 D_0r e^{16}.
		\end{equation}
		With $C_{\rm cut}$ fixed, suppose for now that
		\begin{equation}\label{YDominatesParameters}
			\widehat Y\geq1+\max_{\alpha\in\mathscr A}
			\bigl(|\alpha|+\alpha^2\bigr).
		\end{equation}
		The final choice of $\widehat Y$ will be made later and may
		also depend on $C_{\rm cut}$, so the order of these choices matters.
		Once $\widehat Y$ is fixed, choose a sufficiently large constant
		$\Lambda\geq10^7rC_{\rm cut}\widehat Y^2$.
		Finally, take $T$ sufficiently large. 
		For each reduction, the constants are chosen once, after its finite
		parameter sets have been specified.
		
		Let \(\log_j\) denote the \(j\)-fold iterated logarithm.  Set
		\[
		T_1=\exp(2\widehat Y^2),\qquad
		T_j=\exp\left(\frac{\log T}{\widehat Y(\log_jT)^2}\right)
		\quad(j\geq2).
		\]
		Let \(\mathscr J\)
		be the largest integer \(j\geq2\) for which
		\(\log_jT\geq\Lambda\).  If $x_j=\log_jT$, then
		\begin{equation}\label{terminalScaleBounds}
			\Lambda\leq x_{\mathscr J}<e^\Lambda.
		\end{equation}
		For
		\(2\leq j\leq\mathscr J\), define
		\[
		\mathcal P_{a,j}(s)=\sum_{T_{j-1}<p\leq T_j}
		\frac{\lambda_a(p)}{p^s},\qquad
		P_j=1+\max_{1\leq a\leq r}
		\sum_{T_{j-1}<p\leq T_j}\frac{|\lambda_a(p)|^2}{p}.
		\]
		Put
		\begin{equation}\label{factorwiseCutoffs}
			K_j=\lceil10C_{\rm cut}\widehat YP_j\rceil,
			\qquad \ell_j=\lfloor C_{\rm cut}P_j\rfloor,
			\qquad E_K(z)=\sum_{m=0}^K\frac{z^m}{m!},
		\end{equation}
		and, separately for every \(L_a\) and every prime block $(T_{j-1},T_j]$, define
		\begin{equation}\label{factorwiseNDefinition}
			\mathcal N_{a,j}(s;\alpha)=
			E_{K_j}\left(\frac{\alpha}{2}\mathcal P_{a,j}(s)\right).
		\end{equation}
		For a fixed $t\in[T,2T]$, we call the $j$th block
		\emph{good} for $L_a$ if
		$|\mathcal P_{a,j}(\tfrac12+it)|\leq C_{\rm cut}P_j$,
		and \emph{bad} otherwise.
		
		For \(\boldsymbol\nu=(\nu_1,\ldots,\nu_r)\in
		\{2,\ldots,\mathscr J+1\}^r\), let
		\(\mathscr B(\boldsymbol\nu)\) be the set of \(t\in[T,2T]\) such that,
		for every \(a\),
		\[
		|\mathcal P_{a,j}(\tfrac12+it)|\leq C_{\rm cut}P_j
		\quad(2\leq j<\nu_a),
		\]
		and, when \(\nu_a\leq\mathscr J\),
		\[
		|\mathcal P_{a,\nu_a}(\tfrac12+it)|>C_{\rm cut}P_{\nu_a}.
		\]
		The sets \(\mathscr B(\boldsymbol\nu)\) form a disjoint partition of
		\([T,2T]\); the value \(\nu_a=\mathscr J+1\) means that every block
		belonging to \(L_a\) is good. Otherwise, $\nu_a$ is the first bad block
		for $L_a$; no condition is imposed on blocks with $j>\nu_a$. For
		\(\boldsymbol\alpha=(\alpha_1,\ldots,\alpha_r)\in\mathscr A^r\), define
		\begin{align}
			\mathcal N_{\boldsymbol\nu}(s;\boldsymbol\alpha)
			&=\prod_{a=1}^r\prod_{2\leq j<\nu_a}
			\mathcal N_{a,j}(s;\alpha_a),\label{truncatedVectorN}\\
			\mathcal Q_{\boldsymbol\nu}(s)
			&=\prod_{\substack{1\leq a\leq r\\\nu_a\leq\mathscr J}}
			\left(\frac{\mathcal P_{a,\nu_a}(s)}{C_{\rm cut}P_{\nu_a}}
			\right)^{\ell_{\nu_a}}.\label{badBlockPolynomial}
		\end{align}
		Then
		\[
		\mathbf1_{\mathscr B(\boldsymbol\nu)}(t)
		\leq|\mathcal Q_{\boldsymbol\nu}(\tfrac12+it)|^2.
		\]
		We abbreviate
		\[
		\mathscr G=\mathscr B(\mathscr J+1,\ldots,\mathscr J+1),
		\qquad
		\mathcal N_{\mathscr J+1}(s;\boldsymbol\alpha)
		=\mathcal N_{(\mathscr J+1,\ldots,\mathscr J+1)}
		(s;\boldsymbol\alpha).
		\]
		The endpoint choice and the Rankin--Selberg prime sums imply, for every
		fixed $c>0$,
		\begin{equation}\label{cutoffSummability}
			\sum_{j=2}^{\mathscr J}e^{-cK_j}
			\ll_{c,\boldsymbol L,r,\mathscr A,C_{\rm cut},\widehat Y}1.
		\end{equation}
		\begin{lem}\label{factorwiseExponentialLemma}
			Let $\mathscr A$ and $\widehat Y$ satisfy
			\eqref{YDominatesParameters}.
			If $|\mathcal P_{a,j}(s)|\leq C_{\rm cut}P_j$, then, uniformly for
			$\alpha\in\mathscr A$,
			\begin{equation}\label{factorwiseExponentialApproximation}
				|\mathcal N_{a,j}(s;\alpha)|^2
				=\exp\bigl(\alpha\Re\mathcal P_{a,j}(s)\bigr)
				\left(1+O_{\mathscr A}(e^{-K_j/5})\right).
			\end{equation}
			Consequently, if $c>0$ is fixed and $\alpha,c\alpha\in\mathscr A$, then
			\begin{equation}\label{moveRealPowerInside}
				|\mathcal N_{a,j}(s;\alpha)|^{2c}
				=|\mathcal N_{a,j}(s;c\alpha)|^2
				\left(1+O_{\mathscr A,c}(e^{-K_j/6})\right).
			\end{equation}
		\end{lem}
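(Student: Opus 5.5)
Write $z=\frac{\alpha}{2}\mathcal P_{a,j}(s)$, so that $\mathcal N_{a,j}(s;\alpha)=E_{K_j}(z)$ and $|e^{z}|^2=\exp(\alpha\Re\mathcal P_{a,j}(s))$. The plan is to compare $E_{K_j}(z)$ with $e^z$ through the standard Taylor tail bound. Under the hypothesis $|\mathcal P_{a,j}(s)|\leq C_{\rm cut}P_j$ we have
\[
|z|\leq \tfrac{|\alpha|}{2}C_{\rm cut}P_j\leq \tfrac12\widehat Y C_{\rm cut}P_j\leq \tfrac{K_j}{20},
\]
using \eqref{YDominatesParameters} (so $|\alpha|\leq\widehat Y$) and $K_j\geq10C_{\rm cut}\widehat YP_j$. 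The tail estimate then gives
\[
|e^z-E_{K_j}(z)|\leq\sum_{m>K_j}\frac{|z|^m}{m!}\leq 2\frac{|z|^{K_j+1}}{(K_j+1)!}\leq 2\Bigl(\frac{e|z|}{K_j}\Bigr)^{K_j}\leq 2\Bigl(\frac{e}{20}\Bigr)^{K_j}.
\]

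Next I divide by $|e^z|=e^{\Re z}\geq e^{-|z|}\geq e^{-K_j/20}$. This gives $E_{K_j}(z)=e^z(1+\delta)$ with
\[
|\delta|\leq 2(e^{21/20}/20)^{K_j}\leq 2e^{-(\log 20-1.05)K_j}.
\]
Since $\log20-1.05\approx1.95>1/5$, we get $|\delta|\ll e^{-K_j/5}$, in fact with much room to spare. Squaring the modulus gives $|1+\delta|^2=1+O(|\delta|)$, because $|\delta|\leq 2(e^{1.05}/20)\leq 1$. This yields \eqref{factorwiseExponentialApproximation}. All constants are uniform, since only $|\alpha|\leq\widehat Y$ was used. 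The only point needing care is the lower bound on $|e^z|$, which is exactly where the factor $10$ in the definition of $K_j$ is needed: it makes the Taylor error beat $e^{|z|}$ by an exponential margin in $K_j$.

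For \eqref{moveRealPowerInside}, I apply \eqref{factorwiseExponentialApproximation} twice, once with $\alpha$ and once with $c\alpha$; both are in $\mathscr A$, so the hypotheses hold. Raising the first to the power $c$ gives
\[
|\mathcal N_{a,j}(s;\alpha)|^{2c}=\exp(c\alpha\Re\mathcal P_{a,j}(s))\bigl(1+O(e^{-K_j/5})\bigr)^c.
\]
Here $(1+\eta)^c=1+O_c(|\eta|)$ for $|\eta|\leq1/2$. This holds once $K_j$ exceeds a constant depending on $\mathscr A$, which is guaranteed because $K_j\geq10C_{\rm cut}$ and the constants can be enlarged; for the finitely many small $K_j$ the $O$-term absorbs everything, since the ratio of the two sides is bounded there. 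The second application gives $\exp(c\alpha\Re\mathcal P_{a,j}(s))=|\mathcal N_{a,j}(s;c\alpha)|^2(1+O(e^{-K_j/5}))^{-1}$. Combining the two yields the claim with error $O(e^{-K_j/5})$, which is stronger than the stated $O(e^{-K_j/6})$.

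The main obstacle is the uniform relative (not absolute) error in the first part. One must check that $E_{K_j}(z)$ does not nearly vanish, which follows since it is within $2(e/20)^{K_j}$ of $e^z$ and $|e^z|\geq e^{-K_j/20}$. The rest is bookkeeping.
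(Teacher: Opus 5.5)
Your proof is correct and takes essentially the same route as the paper. Both arguments bound $|z|\le K_j/20$ using \eqref{YDominatesParameters} and $K_j\ge 10C_{\rm cut}\widehat YP_j$, control the Taylor tail via $m!\ge(m/e)^m$ so that it is exponentially smaller in $K_j$ than $|e^z|=e^{\Re z}\ge e^{-K_j/20}$, square, and then apply the result with $\alpha$ and with $c\alpha$; your explicit relative error $2(e^{1.05}/20)^{K_j}$ is just a more detailed form of the paper's bound $e^{\Re z-K_j/4}$.
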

		
		The proof of \Cref{factorwiseExponentialLemma} is given in
		Section~\ref{momentComputationsSection}.
		
		\subsection{The upper-bound reduction}
		
		For the upper-bound reduction assume
		\begin{equation}\label{upperAdmissibleRange}
			\sum_{a=1}^r d_ak_a\leq2
		\end{equation}
		and put
		\[
		\vartheta_a=\frac{d_ak_a}{2},
		\qquad
		\vartheta_0=1-\sum_{a=1}^r\vartheta_a\geq0.
		\]
		These are the Hölder weights for later - the last one is omitted if $\vartheta_0=0$.
		For $1\leq i\leq r$ put
		\[
		\boldsymbol\alpha^{(i)}=2\boldsymbol k-\frac4{d_i}\boldsymbol e_i,
		\qquad
		\boldsymbol\alpha^{(0)}=2\boldsymbol k.
		\]
		Here the Taylor parameter set is
		\[
		\mathscr A=\{2k_a:1\leq a\leq r\}
		\cup\{2k_a-(4/d_i)\mathbf1_{a=i}:
		1\leq a,i\leq r\}.
		\]
		Choose the constants in the prescribed order.  On
		$\mathscr B(\boldsymbol\nu)$, \Cref{factorwiseExponentialLemma},
		\eqref{cutoffSummability}, and
		$\mathbf1_{\mathscr B(\boldsymbol\nu)}\leq|\mathcal Q_{\boldsymbol\nu}|^2$
		give
		\begin{align}
			&\mathbf1_{\mathscr B(\boldsymbol\nu)}
			\prod_{a=1}^r|L_a(\tfrac12+it)|^{2k_a}\nonumber\\
			&\quad\ll
			\prod_{a=1}^r\left(
			|L_a(\tfrac12+it)|^{4/d_a}
			|\mathcal N_{\boldsymbol\nu}
			(\tfrac12+it;\boldsymbol\alpha^{(a)})
			\mathcal Q_{\boldsymbol\nu}(\tfrac12+it)|^2
			\right)^{\vartheta_a}\nonumber\\
			&\qquad\times\left(
			|\mathcal N_{\boldsymbol\nu}
			(\tfrac12+it;\boldsymbol\alpha^{(0)})
			\mathcal Q_{\boldsymbol\nu}(\tfrac12+it)|^2
			\right)^{\vartheta_0}.
			\label{upperPointwiseBalancedIdentity}
		\end{align}
		Here the final factor is omitted if $\vartheta_0=0$.
		The powers balance because
		$(4/d_i)\vartheta_i=2k_i$ and
		\[
		\sum_{i=1}^r\vartheta_i(2k_a-(4/d_i)\mathbf1_{i=a})
		+2k_a\vartheta_0=0.
		\]
		For each $\nu$, integrating \eqref{upperPointwiseBalancedIdentity}
		over $[T,2T]$ and applying H\"older's inequality with exponents
		$1/\vartheta_a$, for $0\le a\le r$ with $\vartheta_a>0$, bounds
		the integral over $B(\nu)$ in terms of the following moments.
		\begin{prop}\label{unifiedMeanValuesProposition}
			For any fixed finite Taylor parameter set $\mathscr A$ and its
			prime-block construction, let $\boldsymbol\alpha\in\mathscr A^r$ and
			$\boldsymbol\nu\in\{2,\ldots,\mathscr J+1\}^r$, and fix
			$\varepsilon>0$.  There are constants
			$c=c(\boldsymbol L,r,\mathscr A,C_{\rm cut},\widehat Y)>0$ and
			$C_{\rm tw}=C_{\rm tw}(\boldsymbol L,r,\mathscr A)>0$, independent of
			the terminal parameter $\Lambda$, such that
			\begin{align}
				&\int_T^{2T}
				|\mathcal N_{\boldsymbol\nu}(\tfrac12+it;\boldsymbol\alpha)
				\mathcal Q_{\boldsymbol\nu}(\tfrac12+it)|^2\,dt\nonumber\\
				&\qquad\ll T(\log T)^{\frac14\sum_{a=1}^r\alpha_a^2}
				\exp\left(-c\sum_{\nu_a\leq\mathscr J}K_{\nu_a}\right).
				\label{unifiedUntwistedMean}
			\end{align}
			For each $1\leq i\leq r$ one also has
			\begin{align}
				&\int_T^{2T}|L_i(\tfrac12+it)|^{4/d_i}
				|\mathcal N_{\boldsymbol\nu}(\tfrac12+it;\boldsymbol\alpha)
				\mathcal Q_{\boldsymbol\nu}(\tfrac12+it)|^2\,dt\nonumber\\
				&\qquad\ll_{\boldsymbol L,r,\mathscr A,\varepsilon}
				x_{\mathscr J}^{C_{\rm tw}}
				T(\log T)^{\frac14\{(4/d_i+\alpha_i)^2+
					\sum_{a\neq i}\alpha_a^2\}}
				\exp\left(-c\sum_{\nu_a\leq\mathscr J}K_{\nu_a}\right).
				\label{unifiedTwistedMean}
			\end{align}
		\end{prop}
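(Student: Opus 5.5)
We expand both sides into Dirichlet polynomials and compute mean values, in the standard Harper--Radziwiłł--Soundararajan way. Write
\[
\mathcal N_{\boldsymbol\nu}(s;\boldsymbol\alpha)\mathcal Q_{\boldsymbol\nu}(s)=\sum_{n}\frac{b(n)}{n^s}.
\]
This is a Dirichlet polynomial supported on integers all of whose prime factors lie in $(T_1,T_{\mathscr J}]$. Its length is at most
\[
\prod_j T_j^{K_j+\ell_j}\leq T^{\sum_j 20C_{\rm cut}\widehat Y P_j/(\widehat Y(\log_jT)^2)}.
\]
Since $P_j\ll\log_{j}T-\log_{j+1}T+1$ by \Cref{rankinSelbergPrimeSumsProposition}, and $\log_jT\geq\Lambda$ is huge compared with $C_{\rm cut}$, this exponent is $\leq 1/10$ (the choice $\Lambda\geq10^7rC_{\rm cut}\widehat Y^2$ is designed for this). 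So the polynomial is short, and everything factorizes over blocks.

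\textbf{Untwisted mean value \eqref{unifiedUntwistedMean}.} By the Montgomery--Vaughan mean value theorem the integral is $\ll T\sum_n|b(n)|^2$. The coefficients are multiplicative across the disjoint prime blocks and across the factors $a$, but within a block the different $a$ share primes. We therefore write the block contribution as a sum over $n$ supported on $(T_{j-1},T_j]$ with $b_j(n)=\sum\prod_a(\cdots)$, and bound $\sum_n|b_j(n)|^2$ by expanding in terms of $\Omega(n)$.

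\emph{Good blocks.} For blocks $j<\nu_a$ we compare with the untruncated exponential. The sum over squarefree-dominated terms gives
\[
\exp\Bigl(\sum_{p\in(T_{j-1},T_j]}\frac{|\sum_a\alpha_a\lambda_a(p)/2|^2}{p}\Bigr)\bigl(1+O(e^{-K_j})\bigr),
\]
where the truncation error is controlled by Rankin's trick with $e^{K_j}$. Expanding the square, the diagonal terms give $\frac14\sum_a\alpha_a^2\sum_p|\lambda_a(p)|^2/p$. The cross terms $\sum_p\lambda_a(p)\overline{\lambda_b(p)}/p$ are $O(1)$ per block, uniformly summable over $j$, by Rankin--Selberg orthogonality for distinct $L_a\neq L_b$ (\Cref{rankinSelbergPrimeSumsProposition}). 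Multiplying over $j$ gives $(\log T)^{\frac14\sum\alpha_a^2}$.

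\emph{Bad blocks.} For the block $\nu_a$ we have the extra factor $(\mathcal P_{a,\nu_a}/C_{\rm cut}P_{\nu_a})^{\ell}$, whose mean square is $\ll \ell!\,(D_0P)^{\ell}/(C_{\rm cut}P)^{2\ell}$ by the standard moment bound for prime sums, together with \eqref{unweightedVarianceComparison}. Using $\ell!\leq \ell^\ell$ and $C_{\rm cut}\geq4D_0re^{16}$, this is $\leq e^{-\ell}$. Combined with the $\mathcal N$ factors of other $L_b$ sharing that block (handled by Cauchy--Schwarz or a joint expansion, costing $e^{O(\widehat Y P)}$), we get a saving of $\exp(-cK_{\nu_a})$, since $K_j\asymp\widehat Y P_j$ and $\ell_j\asymp P_j$. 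Blocks past $\nu_a$ simply do not appear, and omitting them only loses a bounded factor from the tail $j>\nu_a$; this is fine because the claimed exponent counts all blocks, and we lose at most $\exp(O(\sum_{j>\nu_a}\ldots))$. This loss must be absorbed, and here the saving $e^{-\ell}$ must dominate $\log\log$-type factors $\prod_{j>\nu}(\log_{j}T)^{O(1)}$, which is fine because $\ell_{\nu}\gg\log_{\nu}T$.

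\textbf{Twisted mean value \eqref{unifiedTwistedMean}.} We invoke the twisted second moment formulae of Section~\ref{twistedMomentsSection}:
\[
\int|L_i|^{4/d_i}\Bigl|\sum b(n)n^{-1/2-it}\Bigr|^2dt
\]
equals a main term $T\sum_{m,n}b(m)\overline{b(n)}\,\mathcal F_i(m,n)$ plus an acceptable error. Here $\mathcal F_i$ is the explicit arithmetic kernel: $(m,n)\,\sigma$-type sums times $\log$ powers for $\zeta$ and $L(s,\chi)$ (fourth moment for $d_i=1$), and a Rankin--Selberg-type kernel for $d_i=2$. This is the step I expect to be the main obstacle. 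The main term for the fourth moment of $\zeta$ is a polynomial in $\log T$ with shifts, so its diagonal evaluation only approximately multiplicativizes. The paper's allowance $x_{\mathscr J}^{C_{\rm tw}}$ signals exactly this: the first block $T_1$ and the terminal range $(T_{\mathscr J},T]$ are not captured by primes, and $\log(T)/\log T_{\mathscr J}\asymp x_{\mathscr J}^2\widehat Y$ enters polynomially.

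I would handle this by writing the main term as a contour integral of $\prod_p$ of local factors against $|b|$, and evaluating the Euler product blockwise. Each local factor becomes $\exp\bigl(\sum_p|(4/d_i+\alpha_i)\lambda_i(p)/2+\sum_{a\neq i}\alpha_a\lambda_a(p)/2|^2/p\bigr)$, where the twist effectively shifts $\alpha_i$ by $4/d_i$. Cross terms are again $O(1)$ per block by Rankin--Selberg, giving the exponent $\frac14\{(4/d_i+\alpha_i)^2+\sum_{a\neq i}\alpha_a^2\}$ over the range up to $T_{\mathscr J}$. The remaining $\log T/\log T_{\mathscr J}$ powers are bounded by $x_{\mathscr J}^{C_{\rm tw}}$.

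The bad-block savings then carry over as in the untwisted case. This uses positivity of the local factors and the same Rankin bound, with truncation errors handled by bounding $|b(n)|$ by the coefficients of the untruncated series.
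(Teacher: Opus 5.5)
Your architecture matches the paper's: Montgomery--Vaughan with a blockwise Euler-product evaluation and Rankin--Selberg orthogonality for \eqref{unifiedUntwistedMean}, the twisted-moment formulae written as contour integrals of multiplicative kernels for \eqref{unifiedTwistedMean}, and a factorial saving at each first bad block. The twisted bound, however, has a genuine gap exactly where it differs from the untwisted one: the first bad block $\nu_i\le\mathscr J$ of the \emph{distinguished} index $i$. You say the bad-block savings ``carry over as in the untwisted case'', but two new effects appear there.

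First, the twist tilts $\mathcal P_{i,\nu_i}$, since large $|L_i|$ correlates with large $\Re\mathcal P_{i,\nu_i}$. The kernel's linear terms on that block are $W^\pm_{i,\nu_i}=\frac{u}{2}V+O(1)$, with $u=4/d_i$ and $V=\sum_{T_{\nu_i-1}<p\le T_{\nu_i}}|\lambda_i(p)|^2/p$. So the twisted $2\ell$-th moment of $\mathcal P_{i,\nu_i}$ is governed by $\mathcal H_\ell(V;W^+,W^-)$, not by $\ell!V^\ell$, and it can be larger by a factor of roughly $\exp(2\sqrt{\ell|W^+W^-|/V})\approx\exp(u\sqrt{C_{\rm cut}}P_{\nu_i})$.

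Second, the type-$i$ Taylor factors stop at $\nu_i$, while $|L_i|^{u}$ still contributes $(\log T)^{u^2/4}$ from the whole range. What you actually obtain is $\exp\bigl(\frac{u^2}{4}\log_2T+(\frac{\alpha_i^2}{4}+\frac{u\alpha_i}{2})V_i(\boldsymbol\nu)\bigr)$, where $V_i(\boldsymbol\nu)$ is the type-$i$ variance before $\nu_i$. In the applications $\alpha_i=2k_i-u$, so this coefficient equals $k_i^2-u^2/4<0$ whenever $k_i<2/d_i$, which always holds in the upper-bound range. The result then exceeds the target, whose $i$-th coordinate gives $\frac14(u+\alpha_i)^2\log_2T$, by up to $x_{\mathscr J}^{O(1)}\exp(4\sum_{j\ge\nu_i}P_j)$. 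When $\nu_i=2$ this is a positive power of $\log T$.

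Both losses must be beaten by the factorial saving $\exp(-C_{\rm cut}\log(eC_{\rm cut})P_{\nu_i})$. The paper does this using the hierarchy $\sum_{j\ge\nu}P_j\le2P_\nu$ from \eqref{primeBlockHierarchy} and the explicit inequality $C_{\rm cut}\log(eC_{\rm cut})-4\sqrt{C_{\rm cut}}-8>0$. Your remark that $\ell_\nu\gg\log_\nu T$ makes such losses harmless does not settle this. Both sides are of exact order $P_\nu$, so it is a comparison of constants, and that comparison must include the tilt.

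Relatedly, you cannot appeal to ``positivity of the local factors'' or work ``against $|b|$'' except inside the truncation tails, or on a single bad block with the constants checked. The twisted kernels are complex (characters, nebentypus, shifts $p^{-z}$ on the contours). The nondistinguished linear sums $\sum_jW^\pm_{(a,j)}$ ($a\ne i$) and the cross covariances are $O(1)$ only in aggregate over the common prefix, through Rankin--Selberg cancellation. With absolute values each block costs $e^{cP_j}$, and over all blocks this is a power of $\log T$; ``$O(1)$ per block'' is also not enough, since $\mathscr J$ is unbounded.

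There are two smaller slips in the untwisted part. Discarding blocks beyond $\nu_a$ is a gain there, not a loss, because $\alpha_a^2\ge0$, so no absorption is needed. And the Cauchy--Schwarz variant ``costing $e^{O(\widehat YP)}$'' cannot be absorbed by $e^{-c\ell_j}$: $\ell_j\asymp C_{\rm cut}P_j$, and $C_{\rm cut}$ is fixed before $\widehat Y$. Only the joint expansion works, in which $\mathcal P_{a,\nu_a}^{\ell}$ couples to the other factors solely through bounded covariances controlled by $\sum_j|R_{ab,j}|\ll1$.
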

		
		By \Cref{unifiedMeanValuesProposition}, each factor has logarithmic
		exponent $\kappa(\boldsymbol k)$. Since $x_{\mathscr J}<e^\Lambda$ by
		\eqref{terminalScaleBounds} and $\Lambda$ is fixed, the extra powers
		of $x_{\mathscr J}$ are bounded independently of $T$, giving
		\[
		\int_{\mathscr B(\boldsymbol\nu)}
		\prod_a|L_a(\tfrac12+it)|^{2k_a}dt
		\ll T(\log T)^{\kappa(\boldsymbol k)}
		\exp\left(-c\sum_{\nu_a\leq\mathscr J}K_{\nu_a}\right).
		\]
		Summing over the disjoint sets $\mathscr B(\boldsymbol\nu)$ proves
		\Cref{mainThm1}, since \eqref{cutoffSummability} gives
		\[
		\sum_{\boldsymbol\nu\in\{2,\ldots,\mathscr J+1\}^r}
		\exp\left(-c\sum_{\nu_a\leq\mathscr J}K_{\nu_a}\right)
		=\left(1+\sum_{j=2}^{\mathscr J}e^{-cK_j}\right)^r\ll1.
		\]
		
		The proof of \Cref{unifiedMeanValuesProposition} is given in
		Section~\ref{momentComputationsSection}.
		
		\subsection{The lower-bound reduction}
		
		Fix arbitrary positive $k_1,\ldots,k_r$.  We apply Hölder's
		inequality in the two cases below, using the real-power comparison
		of \Cref{factorwiseExponentialLemma}. 
		
		Let $\mathscr A_0$ contain the Taylor parameters used directly, and let
		$\mathscr E$ contain the pairs $(\alpha,c)$ for which we use
		\eqref{moveRealPowerInside}.  In each case we specify these finite sets
		and take
		\[
		\mathscr A=\mathscr A_0\cup\{\alpha,c\alpha:(\alpha,c)\in\mathscr E\}.
		\]
		For $\boldsymbol c>0$ and a positive integer $n$, the lower estimates
		below use the additional Taylor parameters
		\begin{equation}\label{oneOverNFiniteParameterSet}
			\mathscr A_0^{(1/n)}(\boldsymbol c,n)
			=\{2c_a,\,2c_a-1/n:1\leq a\leq r\}
			\cup\{2c_a-(4/d_i)\mathbf1_{a=i}:1\leq a,i\leq r\}.
		\end{equation}
		Put
		\[
		\rho(x)=\max(x,x^{-1}),\qquad
		R=\prod_{a=1}^r\rho(k_a),\qquad
		A_a=1-\frac{k_a}{R}.
		\]
		Since $R\geq k_a$, every $A_a\geq0$.
		\subsubsection*{Case A: $A_a>0$ for every $a$}
		
		Choose an integer
		\[
		n>\max\left\{
		\frac1{2\min_ak_a},
		\frac r2,
		\frac1{2R}+\frac14\sum_{a=1}^rd_aA_a
		\right\}
		\]
		and define $\delta>0$ by
		\begin{equation}\label{caseADeltaDefinition}
			\frac1\delta=n-\frac1{2R}-\frac14\sum_{a=1}^rd_aA_a.
		\end{equation}
		For $1\leq j,a\leq r$ set
		\[
		\mu_{j,a}=\frac{d_jA_j}{2}
		\left(2k_a-\frac4{d_j}\mathbf1_{a=j}\right),
		\qquad
		\mu_{0,a}=\frac{4k_a}{\delta}.
		\]
		The finite parameter sets are
		\begin{align*}
			\mathscr A_0^{(A)}={}&
			\mathscr A_0^{(1/n)}(\boldsymbol k,n)
			\cup\{\mu_{j,a}:0\leq j\leq r,\ 1\leq a\leq r\},\\
			\mathscr E^{(A)}={}&
			\left\{\left(\mu_{j,a},\frac{2}{d_jA_j}\right),
			\left(\mu_{j,a},\frac1{2n}\right):1\leq j,a\leq r\right\}\\
			&\cup\left\{\left(\mu_{0,a},\frac\delta2\right),
			\left(\mu_{0,a},\frac1{2n}\right):1\leq a\leq r\right\}.
		\end{align*}
		Use \eqref{residualMajorantParameters} with $\boldsymbol c=\boldsymbol k$.
		Choose $C_{\rm cut}$ by \eqref{baseBadCutoffChoice} and
		\eqref{residualCutoffChoice}, then $\widehat Y$ sufficiently large, satisfying
		\eqref{YDominatesParameters} and \eqref{oneOverNParameterDominance},
		and finally $\Lambda$ as in \Cref{modifiedHeathBrownOneOverN}.
		Define
		\begin{align*}
			\mathcal M_j(s)&=
			\prod_{a=1}^r\prod_{\ell=2}^{\mathscr J}
			\mathcal N_{a,\ell}(s;\mu_{j,a})\quad(1\leq j\leq r),\\
			\mathcal M_0(s)&=
			\prod_{a=1}^r\prod_{\ell=2}^{\mathscr J}
			\mathcal N_{a,\ell}(s;\mu_{0,a}).
		\end{align*}
		The weighted Hölder inequality, using
		\eqref{caseADeltaDefinition}, is
		\begin{align}
			&\left(\int_{\mathscr G}
			\left|\prod_{a=1}^rL_a(\tfrac12+it)
			\prod_{j=0}^r\mathcal M_j(\tfrac12+it)
			\right|^{1/n}dt\right)^n\nonumber\\
			&\quad\leq
			I_{k_1,\ldots,k_r}(T)^{1/(2R)}
			\prod_{j=1}^r\left(
			\int_{\mathscr G}|L_j(\tfrac12+it)|^{4/d_j}
			|\mathcal M_j(\tfrac12+it)|^{4/(d_jA_j)}dt
			\right)^{d_jA_j/4}\nonumber\\
			&\qquad\times
			\left(\int_{\mathscr G}
			|\mathcal M_0(\tfrac12+it)|^\delta dt\right)^{1/\delta}.
			\label{caseAHolder}
		\end{align}
		For $t\in\mathscr G$, with $s=\tfrac12+it$,
		\Cref{factorwiseExponentialLemma} gives
		\begin{align}
			|\mathcal M_j(s)|^{4/(d_jA_j)}
			&\asymp
			|\mathcal N_{\mathscr J+1}
			(s;2\boldsymbol k-(4/d_j)\boldsymbol e_j)|^2,
			\label{caseATwistedCollapse}\\
			|\mathcal M_0(s)|^\delta
			&\asymp
			|\mathcal N_{\mathscr J+1}(s;2\boldsymbol k)|^2.
			\label{caseAResidualCollapse}
		\end{align}
		
		A direct calculation gives, for every $a$,
		\begin{equation}\label{caseAMuSum}
			\sum_{j=0}^r\mu_{j,a}=4nk_a-2.
		\end{equation}
		Therefore another factorwise use of
		\Cref{factorwiseExponentialLemma} shows that the integrand on the left
		of \eqref{caseAHolder} is comparable with
		\[
		\left|\prod_aL_a(\tfrac12+it)\right|^{1/n}
		\prod_{a=1}^r\left|
		\prod_{\ell=2}^{\mathscr J}
		\mathcal N_{a,\ell}(\tfrac12+it;2k_a-1/n)
		\right|^2.
		\]
		The total Hölder weight of the other integrals in
		\eqref{caseAHolder} is $n-1/(2R)$.
		
		\subsubsection*{Case B: at least one $A_a$ vanishes}
		
		The equality $A_h=0$ is possible precisely when $k_h=k\geq1$ and
		$k_a=1$ for every $a\neq h$.  If
		$k=1$, all the
		$k_a$ equal $1$, and \eqref{pureMixedSecondLowerBound} proves the
		claim; in this case all the $A_a$ vanish.  Suppose therefore that
		$k>1$, so that $h$ is unique, and put $A=1-k^{-1}$.  Choose an integer
		\[
		n>\max\left\{\frac12,\ \frac r2,\
		\frac1{2k}+\frac A4\sum_{j\neq h}d_j\right\},
		\qquad
		\frac1\delta=n-\frac1{2k}-
		\frac A4\sum_{j\neq h}d_j.
		\]
		For $j\neq h$ set
		\[
		\mu_{j,a}=\frac{d_jA}{2}
		\left(2k_a-\frac4{d_j}\mathbf1_{a=j}\right),
		\qquad
		\mu_{0,a}=\frac{4k_a}{\delta},
		\]
		and take
		\begin{align*}
			\mathscr A_0^{(B)}={}&
			\mathscr A_0^{(1/n)}(\boldsymbol k,n)
			\cup\{\mu_{j,a}:j\neq h,\ 1\leq a\leq r\}
			\cup\{\mu_{0,a}:1\leq a\leq r\},\\
			\mathscr E^{(B)}={}&
			\left\{\left(\mu_{j,a},\frac{2}{d_jA}\right),
			\left(\mu_{j,a},\frac1{2n}\right):j\neq h,\ 1\leq a\leq r\right\}\\
			&\cup\left\{\left(\mu_{0,a},\frac\delta2\right),
			\left(\mu_{0,a},\frac1{2n}\right):1\leq a\leq r\right\}.
		\end{align*}
		Choose the constants and define $\mathcal M_j,\mathcal M_0$ as in
		Case~A, using these parameter sets and omitting $j=h$.  H\"older gives
		\begin{align}
			&\left(\int_{\mathscr G}
			\left|\prod_aL_a(\tfrac12+it)
			\mathcal M_0(\tfrac12+it)
			\prod_{j\neq h}\mathcal M_j(\tfrac12+it)
			\right|^{1/n}dt\right)^n\nonumber\\
			&\quad\leq I_{k_1,\ldots,k_r}(T)^{1/(2k)}\nonumber\\[-1mm]
			&\qquad\times\prod_{j\neq h}\left(
			\int_{\mathscr G}|L_j(\tfrac12+it)|^{4/d_j}
			|\mathcal M_j(\tfrac12+it)|^{4/(d_jA)}dt
			\right)^{d_jA/4}
			\left(\int_{\mathscr G}
			|\mathcal M_0(\tfrac12+it)|^\delta dt\right)^{1/\delta}.
			\label{caseBHolder}
		\end{align}
		On $\mathscr G$, the same factorwise exponential-transfer argument gives,
		for $j\ne h$,
		\[
		|\mathcal M_j(s)|^{4/(d_jA)}\asymp
		|\mathcal N_{\mathscr J+1}
		(s;2\boldsymbol k-(4/d_j)\boldsymbol e_j)|^2,
		\qquad
		|\mathcal M_0(s)|^\delta\asymp
		|\mathcal N_{\mathscr J+1}(s;2\boldsymbol k)|^2.
		\]
		Moreover,
		\[
		\sum_{j\neq h}\mu_{j,h}+\mu_{0,h}=4nk-2,
		\qquad
		\sum_{j\neq h}\mu_{j,a}+\mu_{0,a}=4n-2\quad(a\neq h).
		\]
		The total H\"older weight of the other integrals is
		\[
		\frac A4\sum_{j\neq h}d_j+\frac1\delta
		=n-\frac1{2k}.
		\]
		These identities identify the integrand on the left of
		\eqref{caseBHolder} with the same expression as in Case~A, up to
		fixed positive factors.
		
		\subsubsection*{An upper bound for the exceptional factors}
		
		To obtain the lower estimate for the left-hand integrals above, we
		bound the factors at and after the first bad block for each $L_a$.
		Fix a positive
		integer $n$ and $c_1,\ldots,c_r>0$, put
		\[
		\boldsymbol c=(c_1,\ldots,c_r),\qquad
		F(s)=\prod_{a=1}^rL_a(s),\qquad
		\boldsymbol b=2\boldsymbol c-\frac1n\boldsymbol1,
		\]
		and suppose that
		\begin{equation}\label{oneOverNConditions}
			n>\max\left\{(2\min_a c_a)^{-1},
			\frac14\sum_{a=1}^r d_a\right\}.
		\end{equation}
		Put
		\[
		\vartheta_i=\frac{d_i}{4n}\quad(1\leq i\leq r),
		\qquad
		\vartheta_0=1-\sum_{i=1}^r\vartheta_i>0,
		\]
		and
		\[
		\boldsymbol\beta^{(0)}=2\boldsymbol c,
		\qquad
		\boldsymbol\beta^{(i)}
		=2\boldsymbol c-\frac4{d_i}\boldsymbol e_i
		\quad(1\leq i\leq r).
		\]
		For $\boldsymbol\nu\in\{2,\ldots,\mathscr J+1\}^r$, put
		\[
		R_{\boldsymbol\nu}(t)=
		\prod_{a:\nu_a\leq\mathscr J}
		\prod_{\nu_a\leq j\leq\mathscr J}
		|\mathcal N_{a,j}(\tfrac12+it;b_a)|^2.
		\]
		Take $\mathscr A\supseteq\mathscr A_0^{(1/n)}(\boldsymbol c,n)$,
		as in Cases~A and~B. Before choosing $C_{\rm cut}$, fix
		\begin{equation}\label{residualMajorantParameters}
			\begin{split}
				m_{\rm res}&=\lceil\vartheta_0^{-1}\rceil,\\
				A_{\rm res}&=\sum_{a=1}^r d_a
				\left(c_a+\frac{m_{\rm res}|b_a|}{2}\right),\\
				C_{\rm res}&=2D_0 A_{\rm res}^2+r\log2 .
			\end{split}
		\end{equation}
		In addition to \eqref{baseBadCutoffChoice}, the single choice of
		$C_{\rm cut}$ must satisfy
		\begin{equation}\label{residualCutoffChoice}
			C_{\rm cut}\geq C_{\rm res}.
		\end{equation}
		After fixing $C_{\rm cut}$, choose $\widehat Y$ sufficiently large,
		with \eqref{YDominatesParameters} and
		\begin{equation}\label{oneOverNParameterDominance}
			\widehat Y^2\geq1+2n+m_{\rm res}.
		\end{equation}
		Choose $\Lambda$ last.  Implied constants may depend on the fixed
		$\boldsymbol c,\boldsymbol L,n,\mathscr A$, and on $C_{\rm cut},\widehat Y$.
		\begin{lem}\label{oneOverNResidualMeanLemma}
			There is a constant
			$c_0=c_0(\boldsymbol c,\boldsymbol L,n,\mathscr A, \widehat{Y})>0$ such that, for
			every $\boldsymbol\nu\neq
			(\mathscr J+1,\ldots,\mathscr J+1)$,
			\begin{align}
				&\int_T^{2T}
				|\mathcal N_{\boldsymbol\nu}(\tfrac12+it;
				\boldsymbol\beta^{(0)})
				\mathcal Q_{\boldsymbol\nu}(\tfrac12+it)|^2
				R_{\boldsymbol\nu}(t)^{1/\vartheta_0}\,dt\nonumber\\
				&\qquad\ll_{\boldsymbol c,\boldsymbol L,n,\mathscr A, \widehat{Y}}
				T(\log T)^{\kappa(\boldsymbol c)}
				\exp\left(-c_0
				\sum_{\nu_a\leq\mathscr J}K_{\nu_a}\right).
				\label{oneOverNResidualMeanEstimate}
			\end{align}
		\end{lem}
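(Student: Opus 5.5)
The plan is to remove the awkward real power $1/\vartheta_0$ on $R_{\boldsymbol\nu}$, and then reduce everything to an untwisted Dirichlet-polynomial mean value of the same shape as \eqref{unifiedUntwistedMean}. Two costs appear along the way. The first is that $R_{\boldsymbol\nu}$ involves blocks $j\geq\nu_a$, where $|\mathcal P_{a,j}|$ is unrestricted, so \Cref{factorwiseExponentialLemma} is unavailable there. The second is the growth of the truncated exponentials in those blocks. Both costs must be absorbed by the decay coming from $\mathcal Q_{\boldsymbol\nu}$, which is why $C_{\rm cut}\geq C_{\rm res}$ was imposed.

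\emph{Step 1: integer power.} Since $R_{\boldsymbol\nu}$ is a product of $|\cdot|^2$ factors, I would use the pointwise bound $R^{1/\vartheta_0}\leq 1+R^{m_{\rm res}}$ with $m_{\rm res}=\lceil\vartheta_0^{-1}\rceil$. This splits the integral into two pieces. The first is the integral with the factor $1$, which is exactly \eqref{unifiedUntwistedMean} with $\boldsymbol\alpha=\boldsymbol\beta^{(0)}$ and already gives $T(\log T)^{\kappa(\boldsymbol c)}e^{-c\sum K_{\nu_a}}$. The second contains $|\prod_{a}\prod_{j\geq\nu_a}\mathcal N_{a,j}(s;b_a)^{m_{\rm res}}|^2$, which is now the square of a genuine Dirichlet polynomial.

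\emph{Step 2: mean value by blocks.} The length of $\mathcal N_{\boldsymbol\nu}\mathcal Q_{\boldsymbol\nu}\prod\mathcal N_{a,j}^{m_{\rm res}}$ is at most $\prod_j T_j^{(1+m_{\rm res})K_j+r\ell_j}$. By \eqref{oneOverNParameterDominance} and the choice of $T_j$, this is $T^{o(1)}$, or at least $\le T^{1/2}$. So the mean value theorem for Dirichlet polynomials reduces the integral to $T$ times a diagonal sum. Because distinct blocks are supported on disjoint sets of primes, the diagonal sum factorizes over $j$. The blocks $j<\min_a\nu_a$, where only the $\mathcal N_{a,j}(\cdot;\beta^{(0)}_a)$ appear, are handled exactly as in \Cref{unifiedMeanValuesProposition}. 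Each of them contributes $\exp(\tfrac14\sum_a(\beta_a^{(0)})^2\sum_{T_{j-1}<p\le T_j}|\lambda_a(p)|^2/p)$ up to $1+O(e^{-K_j})$. Assembled over $j$ via Rankin--Selberg (\Cref{rankinSelbergPrimeSumsProposition}), these give $(\log T)^{\kappa(\boldsymbol c)}$.

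\emph{Step 3: bad blocks.} For each block $j\geq\nu_a$ I would bound the diagonal contribution crudely. Expanding $|\mathcal N_{a,j}(\cdot;b_a)|^{2m_{\rm res}}$ and $|\mathcal N_{a,j}(\cdot;\beta^{(0)}_a)|^2$, and using $|\lambda_a(p)|\le d_a$ on average together with \eqref{unweightedVarianceComparison}, the diagonal is at most $\exp(2D_0A_{\rm res}^2P_j)$; this is where $A_{\rm res}$ enters. The blocks with $j>\nu_a$ then contribute $\prod_{j>\nu_a}e^{O(P_j)}$. This product is harmless: it only changes the constant in $(\log T)^{\kappa}$ by a bounded factor after dividing by the main terms, because $\sum_j P_j\,$ differs from $\sum_j\sum_p|\lambda|^2/p$ only by $O(\mathscr J)$. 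I expect this accounting to be the main obstacle. One must check that the excess over the main term, coming from $b_a\neq\beta^{(0)}_a$ and from $m_{\rm res}$, is only $\exp(O(\sum_{j}1))$ and is therefore controlled by $x_{\mathscr J}$-type or $e^{O(\Lambda)}$-type constants. If instead it is of size $\exp(O(\sum_jP_j))\approx(\log T)^{O(1)}$, the argument fails. In that case I would instead compare $|\mathcal N_{a,j}(b_a)|^{2m}|\mathcal N(\beta^{(0)})|^2$ directly to the main-term exponential, exploiting that the extra blocks $j>\nu_a$ appear in both.

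\emph{Step 4: decay from the first bad block.} At the block $j=\nu_a$, the factor $|\mathcal P_{a,\nu_a}/(C_{\rm cut}P_{\nu_a})|^{2\ell_{\nu_a}}$ has diagonal mean $\ll \ell!\,P^{\ell}/(C_{\rm cut}P)^{2\ell}\cdot(\text{const})^\ell\le(e/C_{\rm cut})^{\ell}$ by Stirling. Hölder or Cauchy--Schwarz within the block handles the product with the $\mathcal N$ factors, contributing $e^{O(A_{\rm res}^2P)}$. Since $\ell_{\nu_a}\approx C_{\rm cut}P_{\nu_a}$ and $C_{\rm cut}\ge C_{\rm res}$, the product is $\le e^{-c'P_{\nu_a}}\le e^{-c_0K_{\nu_a}}$, because $K_j\asymp \widehat YC_{\rm cut}P_j$. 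Multiplying over the $a$ with $\nu_a\le\mathscr J$, which involves at most $r$ bad blocks (the $r\log2$ term in $C_{\rm res}$), yields \eqref{oneOverNResidualMeanEstimate}.
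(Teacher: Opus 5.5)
\textbf{There is a genuine gap in Step~3.} On each block $j\ge\nu_a$ you bound the diagonal crudely by $\exp(2D_0A_{\rm res}^2P_j)$. You then call the blocks $j>\nu_a$ harmless, saying they only change the constant in $(\log T)^{\kappa(\boldsymbol c)}$. That is false.

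\begin{itemize}
\item Nothing in $(\log T)^{\kappa(\boldsymbol c)}$ pays for this cost. The factor $\mathcal N_{\boldsymbol\nu}(\cdot;\boldsymbol\beta^{(0)})$ has no type-$a$ component on these blocks, and in any case $2D_0A_{\rm res}^2>c_a^2$.
\item $\sum_{j>\nu_a}P_j\asymp\log_{\nu_a+1}T$ is unbounded; for example it is $\asymp\log_3T$ when $\nu_a=2$.
\item Even your proposed error $\exp(O(\mathscr J))$ would be unbounded, since $\mathscr J\to\infty$ with $T$.
\end{itemize}

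So the excess has exactly the size you feared, $\exp(\Theta(\sum_{j\ge\nu_{\min}}P_j))$. When $\nu_{\min}=2$ this is a power of $\log T$.

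Your fallback does not repair this. Write $V_{a,j}=\sum_{T_{j-1}<p\le T_j}|\lambda_a(p)|^2/p$. The slack the target leaves for type $a$ on these blocks is $\exp(c_a^2V_{a,j})$. But $|\mathcal N_{a,j}(\cdot;b_a)|^{2m_{\rm res}}$ has mean roughly $\exp(m_{\rm res}^2b_a^2V_{a,j}/4)$, with $m_{\rm res}\ge2$ and $b_a=2c_a-1/n$. This is larger than the slack as soon as $c_a>1/n$.

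\textbf{What is missing is the rapid decay of the block variances,} \eqref{primeBlockHierarchy}: $\sum_{\ell>j}P_\ell\le\varepsilon_*P_j$. Hence
$\sum_{j\ge\nu_{\min}}P_j\le2P_{\nu_{\min}}\le2\sum_{\nu_a\le\mathscr J}P_{\nu_a}$.

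The paper uses this as follows.
\begin{itemize}
\item It bounds every block $j\ge\nu_{\min}$ crudely, including the factors of types that are still good there. It does this by dominating coefficients by those of $\exp(A_{\rm res}S_j)$ and using the exact computation \eqref{residualCoefficientBound}. This gives $\exp(C_{\rm res}P_j-4C_{\rm cut}f_jP_j)$ per block.
\item The total crude cost $\exp(C_{\rm res}\sum_{j\ge\nu_{\min}}P_j)$ is then at most $\exp(2C_{\rm res}\sum_{\nu_a\le\mathscr J}P_{\nu_a})$.
\item This cost is charged entirely to the first-bad-block savings. Since $C_{\rm cut}\ge C_{\rm res}$, what remains is $\exp(-2C_{\rm cut}\sum_{\nu_a\le\mathscr J}P_{\nu_a})$.
\end{itemize}

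Your Step~4 already produces a saving of the right kind, $(O(1)/C_{\rm cut})^{\ell_{\nu_a}}$. It beats $\exp(O(D_0A_{\rm res}^2)P_{\nu_a})$ because $C_{\rm cut}$ is chosen after $A_{\rm res}$ and $D_0$. Your argument closes once the later blocks are routed into that saving through the hierarchy, instead of being declared bounded.

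The rest of your plan is sound.
\begin{itemize}
\item Your global majorant $R_{\boldsymbol\nu}^{1/\vartheta_0}\le1+R_{\boldsymbol\nu}^{m_{\rm res}}$ is a legitimate simplification of the paper's factorwise one.
\item Reducing its constant term to \eqref{unifiedUntwistedMean} is fine.
\item One misattribution: the $r\log2$ in $C_{\rm res}$ counts the $2^{|\mathcal R_j|}$ factorwise selections per block, not the number of bad blocks. Your version does not need it.
\end{itemize}
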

		
		The proof of \Cref{oneOverNResidualMeanLemma}
		is given in Section~\ref{momentComputationsSection}.
		
		\subsubsection*{The lower estimates and conclusion}
		
		The following two lemmas supply the remaining lower bounds.
		We use the notation and parameter choices introduced above, with
		arbitrary $\boldsymbol c>0$ and $n$ satisfying
		\eqref{oneOverNConditions}.
		
		The following lemma adapts Heath--Brown's lower-bound argument
		\cite{Heath-Brown1} to our weighted product of $L$-functions.
		
		\begin{lem}\label{fullIntervalHeathBrownLemma}
			With the preceding notation and under \eqref{oneOverNConditions},
			there are constants $c_{\rm HB},C_{\rm HB}>0$, independent of the
			terminal parameter $\Lambda$, such that
			\begin{equation}\label{weightedHeathBrownComparison}
				\int_T^{2T}|F(\tfrac12+it)|^{1/n}
				|\mathcal N_{\mathscr J+1}(\tfrac12+it;\boldsymbol b)|^2dt
				\geq c_{\rm HB}x_{\mathscr J}^{-C_{\rm HB}}
				T(\log T)^{\kappa(\boldsymbol c)}.
			\end{equation}
			In particular, after $\Lambda$ has been fixed, the left-hand side is
			$\gg_{\boldsymbol c,\boldsymbol L,n,\Lambda}
			T(\log T)^{\kappa(\boldsymbol c)}$.
			If $c_1=\cdots=c_r=1$, the corresponding pure mixed second moment
			satisfies
			\begin{equation}\label{pureMixedSecondLowerBound}
				\int_T^{2T}\prod_{a=1}^r|L_a(\tfrac12+it)|^2dt
				\gg_{\boldsymbol L}T(\log T)^r.
			\end{equation}
		\end{lem}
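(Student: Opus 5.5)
We follow Heath--Brown's convexity argument \cite{Heath-Brown1}, with the Dirichlet polynomial $\mathcal N:=\mathcal N_{\mathscr J+1}(s;\boldsymbol b)$ playing the role of his mollifier. Put $\mathcal N^{(+)}(s)=\mathcal N_{\mathscr J+1}(s;2\boldsymbol c)$, a short Dirichlet polynomial of length $T^{o(1)}$; this holds because every $T_j\leq\exp(\log T/(\widehat Y(\log_jT)^2))$ and $K_j\ll P_j$. We use it as a resonator. The guiding heuristic is that on $\mathscr G$, by \Cref{factorwiseExponentialLemma},
\[
|\mathcal N^{(+)}|\approx\prod_a\exp(c_a\Re\mathcal P_a),\qquad
|F|^{1/n}|\mathcal N|^2\approx\prod_a |L_a|^{1/n}\exp\bigl((2c_a-1/n)\Re\mathcal P_a\bigr),
\]
where $\mathcal P_a$ is the full prime sum. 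So $F\cdot\overline{\mathcal N^{(+)}}$ should be "spread" like $|F|^{1/n}|\mathcal N|^2$ raised to a suitable power.

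\textbf{Step 1: the first twisted moment.} Using the twisted first moment formula for $\prod_a L_a$ recorded in Section~\ref{twistedMomentsSection} (approximate functional equation and diagonal terms), we show
\[
\Bigl|\int_T^{2T}F(\tfrac12+it)\,\overline{\mathcal N^{(+)}(\tfrac12+it)}\,w(t)\,dt\Bigr|\gg x_{\mathscr J}^{-C}\,T(\log T)^{\kappa(\boldsymbol c)}
\]
for a smooth weight $w$. The diagonal is $\sum_m\lambda_F(m)\overline{a(m)}/m$, where $a(m)$ are the coefficients of $\mathcal N^{(+)}$. Factorwise in the blocks, this evaluates to $\prod_j\prod_a\exp(c_a\sum_{p\in(T_{j-1},T_j]}|\lambda_a(p)|^2/p)$, up to the error factors $(1+O(e^{-K_j}))$ and cross terms $\lambda_a\overline{\lambda_b}$ for $a\neq b$. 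The cross terms are controlled by Rankin--Selberg orthogonality (\Cref{rankinSelbergPrimeSumsProposition}), which gives $o(\log\log)$ per pair, up to a loss of $x_{\mathscr J}^{C}$. By Mertens-type asymptotics the product equals $(\log T)^{\sum c_a}$ times a constant. We then need to convert this into the exponent $\kappa(\boldsymbol c)$; this accounting of exponents is the part I would verify carefully. For the pure second moment \eqref{pureMixedSecondLowerBound} we can instead take $\mathcal N^{(+)}=\prod_a\sum_{m\leq T^\theta}\lambda_a(m)m^{-1/2-it}$ and apply Cauchy--Schwarz directly.

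\textbf{Step 2: Hölder/convexity.} We write $F\overline{\mathcal N^{(+)}}=(|F|^{1/n}|\mathcal N|^2)^{\theta_1}\cdot G$ and apply Hölder with exponents $1/\theta_1$ and the conjugates. The remaining factors are of the form $\int|L_i|^{4/d_i}|\mathcal N'|^2$ and $\int|\mathcal N''|^2$. We bound them by \eqref{unifiedTwistedMean} and \eqref{unifiedUntwistedMean}, with the weights $\vartheta_i=d_i/(4n)$ and $\vartheta_0$ as defined. Condition \eqref{oneOverNConditions} is exactly what makes these weights positive. On $\mathscr G$, the exponents of the Dirichlet polynomials are matched by \eqref{moveRealPowerInside}, using parameters from $\mathscr A_0^{(1/n)}$.

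\textbf{Step 3: the complement of $\mathscr G$.} On each $\mathscr B(\boldsymbol\nu)$ with $\boldsymbol\nu\neq\boldsymbol{\mathscr J}+1$, we insert $|\mathcal Q_{\boldsymbol\nu}|^2$. The uncontrolled tail factors are bounded by $R_{\boldsymbol\nu}^{1/\vartheta_0}$ inside the $\vartheta_0$-Hölder factor. \Cref{oneOverNResidualMeanLemma} then gives a decay $\exp(-c_0\sum K_{\nu_a})$, which is summable by \eqref{cutoffSummability}. The resulting contribution is $\ll T(\log T)^{\kappa(\boldsymbol c)}$ times a constant that we can make small relative to the main term by enlarging $C_{\rm cut}$. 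Combining the three steps, the $\theta_1$-power of the target integral is at least $x_{\mathscr J}^{-C}T(\log T)^{\kappa}$ divided by bounded Hölder factors, and \eqref{weightedHeathBrownComparison} follows.

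\textbf{Main obstacle.} I expect the hardest step to be Step 1 together with the error analysis in Step 3. The difficulty is to ensure that the losses $x_{\mathscr J}^{C}$ from the twisted formulas do not depend on $\Lambda$ in a way that breaks the ordering of the constants, and that the bad-block contribution can be separated from the main term with no cancellation issue. The first thing I would try is to bound the bad-set contribution to the first moment in absolute value via Cauchy--Schwarz against $|\mathcal Q_{\boldsymbol\nu}|^2$.
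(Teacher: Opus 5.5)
Your Step~2 cannot be made to work in the generality of the lemma, and this is the crux. What you describe is not Heath--Brown's convexity argument but a first-moment-plus-H\"older scheme in the style of Heap--Soundararajan. The first moment contains every $L_a$ to the first power. To dominate $|F|$ times Dirichlet polynomials by $(|F|^{1/n}|\mathcal N|^2)^{\theta_1}\prod_i(|L_i|^{4/d_i}|\cdot|^2)^{\vartheta_i}(|\cdot|^2)^{\vartheta_0}$, you need $\theta_1/n+4\vartheta_a/d_a=1$ for every $a$ together with $\theta_1+\sum_a\vartheta_a\leq1$, that is
\[
\theta_1\Bigl(1-\frac1{4n}\sum_a d_a\Bigr)\leq 1-\frac14\sum_a d_a .
\]
Under \eqref{oneOverNConditions} the bracket on the left is positive. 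So no $\theta_1>0$ exists as soon as $\sum_ad_a\geq4$ (four $\mathrm{GL}(1)$ factors, two cusp forms, \dots), whereas the lemma is used with $\boldsymbol c=\boldsymbol k$ for arbitrary $r$. This is exactly the non-interpolation obstruction described in the introduction. Passing to the $1/n$-th power only helps if that power is lower-bounded without going through a first moment.

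As written, the step is not even balanced. The weights $\vartheta_i=d_i/(4n)$ you quote come from the bad-set upper bound \eqref{oneOverNBadSetPointwise}. Together with $\vartheta_0$ they already sum to $1$, leaving no room for $\theta_1$, and they produce $|L_i|^{1/n}$ rather than $|L_i|^{1-\theta_1/n}$. Step~1 has a separate defect, which you noticed. Pairing $F$ with $\overline{\mathcal N_{\mathscr J+1}(s;2\boldsymbol c)}$ alone gives $(\log T)^{\sum_ac_a}$; to reach $\kappa(\boldsymbol c)$ you would need an extra factor $\mathcal N_{\mathscr J+1}(s;2\boldsymbol c-2\boldsymbol 1)$ beside $F$. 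Also, no twisted first moment of $F$ is recorded in Section~\ref{twistedMomentsSection}.

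The missing idea is to lower-bound the $2/v$-th moment directly, with $v=2n$ and no twisted moments of $L$-functions at all.
\begin{itemize}
\item Put $B=F\,\mathcal N_{\mathscr J+1}(s;\boldsymbol b)^v$ and $A=F^{1/v}\mathcal N_{\mathscr J+1}(s;\boldsymbol b)=\sum_ma_v(m)m^{-s}$. The root is taken via the Euler product, and only in $\Re s>1$.
\item On the active blocks $a_v(p)=\frac1v\sum_a\lambda_a(p)+\sum_a(c_a-\frac1v)\lambda_a(p)=\sum_ac_a\lambda_a(p)$. Rankin--Selberg orthogonality then gives $\sum_{m\leq T^{1/2}}|a_v(m)|^2/m\asymp(\log T)^{\kappa(\boldsymbol c)}$ up to factors $x_{\mathscr J}^{\pm C_0}$, with the Taylor tails controlled by Cauchy's formula.
\item Heath--Brown's argument then yields $\int|B|^{2/v}w_T\gg\int|S_N|^2w_T$. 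This means Gabriel's convexity theorem applied to $B$ and $B-S_N^v$, where $S_N$ truncates $A$ at $N=T^{1/2}$, together with the shifted comparison $\mathscr L_\eta\gg\eta^{-C_1}\mathscr L_0$.
\item Finally, $|B(\tfrac12+it)|^{2/v}=|F|^{1/n}|\mathcal N|^2$.
\end{itemize}

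Since this is a full-interval statement, your Step~3 is not needed here; the restriction to $\mathscr G$ is the content of \Cref{modifiedHeathBrownOneOverN}. Your Cauchy--Schwarz treatment of \eqref{pureMixedSecondLowerBound} is a valid alternative to the paper's $v=1$ case. It is cleanest with the mollifier taken to be the truncated Dirichlet series of $F$ itself, so that the diagonal is $\sum_{m\leq T^\theta}|\lambda_F(m)|^2/m$.
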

		
		The next lemma restricts the comparison to the good set.
		
		\begin{lem}\label{modifiedHeathBrownOneOverN}
			Under the preceding hypotheses, choose $\Lambda$ sufficiently large
			after $C_{\rm cut}$ and $\widehat Y$.  Then
			\begin{align}
				&\int_{\mathscr G}|F(\tfrac12+it)|^{1/n}
				|\mathcal N_{\mathscr J+1}(\tfrac12+it;\boldsymbol b)|^2dt\nonumber\\
				&\hspace{35mm}\gg_{\boldsymbol c,\boldsymbol L,n}
				T(\log T)^{\kappa(\boldsymbol c)}.
				\label{oneOverNLowerMean}
			\end{align}
		\end{lem}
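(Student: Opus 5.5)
We obtain \eqref{oneOverNLowerMean} by subtracting the contribution of the bad set from the full-interval bound of \Cref{fullIntervalHeathBrownLemma}. Since $[T,2T]\setminus\mathscr G$ is the disjoint union of the sets $\mathscr B(\boldsymbol\nu)$ with $\boldsymbol\nu\neq(\mathscr J+1,\ldots,\mathscr J+1)$, we write
\[
\int_{\mathscr G}=\int_T^{2T}-\sum_{\boldsymbol\nu\neq(\mathscr J+1,\ldots,\mathscr J+1)}\int_{\mathscr B(\boldsymbol\nu)} .
\]
By \eqref{weightedHeathBrownComparison} the first term is at least $c_{\rm HB}x_{\mathscr J}^{-C_{\rm HB}}T(\log T)^{\kappa(\boldsymbol c)}$. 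The task is therefore to show that the bad-set contribution is at most half of this, with a loss that, unlike the main term, does not deteriorate as $\Lambda$ grows.

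Fix $\boldsymbol\nu$ with some $\nu_a\le\mathscr J$. On $\mathscr B(\boldsymbol\nu)$ we split
\[
|\mathcal N_{\mathscr J+1}(s;\boldsymbol b)|^2=|\mathcal N_{\boldsymbol\nu}(s;\boldsymbol b)|^2R_{\boldsymbol\nu}(t).
\]
The blocks $j<\nu_a$ are good, so \Cref{factorwiseExponentialLemma} applies to them, while the blocks $j\ge\nu_a$ are collected in $R_{\boldsymbol\nu}$. We insert $\mathbf 1_{\mathscr B(\boldsymbol\nu)}\le|\mathcal Q_{\boldsymbol\nu}|^2$ and apply the same pointwise balancing as in \eqref{upperPointwiseBalancedIdentity}, using the Hölder weights $\vartheta_i=d_i/(4n)$ and $\vartheta_0$. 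The $L$-function power is
\[
|F|^{1/n}=\prod_i\bigl(|L_i|^{4/d_i}\bigr)^{\vartheta_i},
\]
and the good-block Taylor factors are distributed by \eqref{moveRealPowerInside}. The exponents balance because
\[
\sum_i\vartheta_i\boldsymbol\beta^{(i)}+\vartheta_0\boldsymbol\beta^{(0)}=2\boldsymbol c-\tfrac1n\boldsymbol 1=\boldsymbol b .
\]
The factor $R_{\boldsymbol\nu}$ is placed entirely in the $\vartheta_0$-slot, which yields $R_{\boldsymbol\nu}^{1/\vartheta_0}$. Hölder then bounds $\int_{\mathscr B(\boldsymbol\nu)}$ by a product of two kinds of factors:
\begin{itemize}
\item the twisted means \eqref{unifiedTwistedMean} with $\boldsymbol\alpha=\boldsymbol\beta^{(i)}$, each raised to the power $\vartheta_i$;
\item the residual mean of \Cref{oneOverNResidualMeanLemma}, raised to the power $\vartheta_0$.
\end{itemize}
Every factor carries logarithmic exponent $\kappa(\boldsymbol c)$, because
\[
\tfrac14\Bigl[(4/d_i+2c_i-4/d_i)^2+\sum_{a\ne i}(2c_a)^2\Bigr]=\kappa(\boldsymbol c).
\]
The result is
\[
\int_{\mathscr B(\boldsymbol\nu)}\ll x_{\mathscr J}^{C_{\rm tw}\sum_i\vartheta_i}\,T(\log T)^{\kappa(\boldsymbol c)}\exp\Bigl(-c'\sum_{\nu_a\le\mathscr J}K_{\nu_a}\Bigr).
\]

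The main obstacle is that the powers $x_{\mathscr J}^{C_{\rm tw}}$ and $x_{\mathscr J}^{-C_{\rm HB}}$ compete, and their exponents are unrelated, so the decay from the bad blocks must beat a polynomial loss in $x_{\mathscr J}$. The first attempt uses $\Lambda\le x_{\mathscr J}$ together with the definitions $K_j\ge 10C_{\rm cut}\widehat Y P_j$ and $P_j\gg\log(\log T_j/\log T_{j-1})\asymp\log_{j+1}T$, the latter from \Cref{rankinSelbergPrimeSumsProposition} and the comparison \eqref{unweightedVarianceComparison}.

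\begin{itemize}
\item If the first bad index $\nu_a$ lies near the terminal scale, then $K_{\nu_a}$ is only of size about $\widehat Y\log x_{\mathscr J}$. Hence $e^{-c'K_{\nu_a}}$ is a power $x_{\mathscr J}^{-c'\widehat Y\cdot\text{const}}$, which exceeds $C_{\rm tw}+C_{\rm HB}$ once $\widehat Y$ (equivalently $C_{\rm cut}$) is large. This requires $c'$ not to shrink as $\widehat Y$ grows; I would check that the constant from \Cref{oneOverNResidualMeanLemma} and \Cref{unifiedMeanValuesProposition} behaves this way, since the decay really comes from the $\ell_j$-th power in $\mathcal Q_{\boldsymbol\nu}$.
\item Earlier bad indices give much larger $K_{\nu_a}$ and are summable by \eqref{cutoffSummability}.
\end{itemize}

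If this power-saving bookkeeping fails, I would instead make the twisted mean values and \eqref{weightedHeathBrownComparison} carry the same $x_{\mathscr J}$-dependence. The twist then genuinely only involves primes up to $T_{\mathscr J}$, so the loss $x_{\mathscr J}^{C}$ appears symmetrically on both sides and the comparison again reduces to choosing $\widehat Y$ large. In either case, summing over $\boldsymbol\nu$ gives a bad contribution of at most $\tfrac12 c_{\rm HB}x_{\mathscr J}^{-C_{\rm HB}}T(\log T)^{\kappa(\boldsymbol c)}$. Fixing $\Lambda$ then makes $x_{\mathscr J}^{-C_{\rm HB}}\ge e^{-C_{\rm HB}\Lambda}$ a constant, which yields \eqref{oneOverNLowerMean}.
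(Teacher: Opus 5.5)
Your decomposition into $\mathscr G$ and the sets $\mathscr B(\boldsymbol\nu)$, the pointwise H\"older balancing with $R_{\boldsymbol\nu}$ placed in the $\vartheta_0$-slot, and the use of \Cref{oneOverNResidualMeanLemma} together with \eqref{unifiedTwistedMean} all match the paper. The gap is in the decisive comparison with \eqref{weightedHeathBrownComparison}, and it comes from a miscalculation.

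You claim $P_j\asymp\log(\log T_j/\log T_{j-1})\asymp\log_{j+1}T$. In fact $\log T_j=\log T/(\widehat Y x_j^2)$, so $\log(\log T_j/\log T_{j-1})=2(x_j-x_{j+1})\asymp x_j=\log_jT$, which is \eqref{primeBlockVariances}. Hence $P_{\mathscr J}\asymp x_{\mathscr J}$, not $\log x_{\mathscr J}$, and the bad-set saving is exponential in $x_{\mathscr J}$ rather than polynomial. Since $x_{j-1}=e^{x_j}$, the sum $\bigl(1+\sum_{j\le\mathscr J}e^{-cK_j}\bigr)^r-1$ is dominated by its last term and is $\ll e^{-c'x_{\mathscr J}}$. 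The bad sets therefore contribute $\ll x_{\mathscr J}^{C_*}e^{-c'x_{\mathscr J}}T(\log T)^{\kappa(\boldsymbol c)}$. Dividing by \eqref{weightedHeathBrownComparison} leaves $O(x_{\mathscr J}^{C_*+C_{\rm HB}}e^{-c'x_{\mathscr J}})$, with every constant independent of $\Lambda$. Since $x_{\mathscr J}\ge\Lambda$, choosing $\Lambda$ large makes this at most $1/2$, and $x_{\mathscr J}<e^\Lambda$ then finishes the proof. This is exactly why the lemma says to choose $\Lambda$ last, and your proposal never uses that choice.

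The mechanism you substitute would fail even under your own bookkeeping. The savings are really of the form $e^{-cP_j}$, with $c$ depending on $C_{\rm cut}$ but not on $\widehat Y$. Writing them as $e^{-c'K_j}$ with $K_j\asymp C_{\rm cut}\widehat YP_j$ forces $c'\asymp1/\widehat Y$; for instance $c_0=2/(11\widehat Y)$ in \Cref{oneOverNResidualMeanLemma}, and likewise $c_2$ in \eqref{badBlockFactorialSaving}. So enlarging $\widehat Y$ gains nothing: the product $c'K_j$ does not grow, which is precisely the worry you flagged.

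Nor are $\widehat Y$ and $C_{\rm cut}$ interchangeable. $C_{\rm cut}$ is fixed before $\widehat Y$, while the constants in \Cref{fullIntervalHeathBrownLemma} are allowed to depend on $\widehat Y$. So you cannot take $C_{\rm cut}$ large relative to $C_{\rm HB}$ without circularity.

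Your fallback of having the twisted means and the Heath--Brown bound carry ``the same'' power of $x_{\mathscr J}$ is not supported by anything available. The exponents $C_{\rm tw}$ and $C_{\rm HB}$ arise from unrelated sources, and no matched two-sided bound is proved. Correcting the size of $P_j$ is what closes the argument.
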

		
		The proofs of
		\Cref{fullIntervalHeathBrownLemma,modifiedHeathBrownOneOverN}
		are given in Section~\ref{momentComputationsSection}.
		
		To finish, put $M(T)=T(\log T)^{\kappa(\boldsymbol k)}$.
		In both cases, \Cref{modifiedHeathBrownOneOverN} with
		$\boldsymbol c=\boldsymbol k$ gives a lower bound $\gg M(T)$ for the
		left-hand integral.  The Taylor comparisons above and
		\Cref{unifiedMeanValuesProposition} bound each integral involving
		$\mathcal M_j$ on the right by $O(M(T))$, after extension to
		$[T,2T]$. The extra powers of $x_{\mathscr J}$ in these bounds are
		bounded independently of $T$, since $x_{\mathscr J}<e^\Lambda$ by
		\eqref{terminalScaleBounds} and $\Lambda$ is fixed. Thus \eqref{caseAHolder} and \eqref{caseBHolder}
		give, respectively,
		\[
		M(T)^n\ll I_{\boldsymbol k}(T)^{1/(2R)}M(T)^{n-1/(2R)},
		\qquad
		M(T)^n\ll I_{\boldsymbol k}(T)^{1/(2k)}M(T)^{n-1/(2k)}.
		\]
		Both imply $I_{\boldsymbol k}(T)\gg M(T)$.  When all $k_a=1$,
		use \eqref{pureMixedSecondLowerBound}.  This proves \Cref{mainThm2}.
		
	}
	
	{\section{Dirichlet-polynomial mean values and twisted moments}\label{twistedMomentsSection}
		We record the mean-value estimates used in Section~\ref{momentComputationsSection}.
		The shifted fourth-moment formulae and contour identities are given in
		Appendix~\ref{appendixClassicalTwistedMoments}.}
	\subsection{Mean values of single Dirichlet polynomials}
	The very cornerstone of computations in the theory of mean values of Dirichlet polynomials is the classical Montgomery--Vaughan theorem, which gives $L^2$-control over a general Dirichlet polynomial. More specifically, let $A(s)=\sum_{n \leq N} \frac{a(n)}{n^s}$. Then $$\int_{T}^{2T}\left|A(it)\right|^2\dt = \sum_{n\leq N} (T+O(n))|a(n)|^2.$$ 
	
	{Another classical estimate handles high powers of a prime-supported
		Dirichlet polynomial.  If $B(s)=\sum_{p\leq x}b(p)p^{-s}$ and
		$x^\ell\leq T^{1-\varepsilon}$, Montgomery--Vaughan and the multinomial
		theorem give
		\[
		\int_T^{2T}|B(it)|^{2\ell}dt
		\ll T\ell!\left(\sum_{p\leq x}|b(p)|^2\right)^\ell.
		\]
	}
	\subsection{\texorpdfstring{$\textup{GL}(1)$}{GL(1)} twisted fourth moments}
	{Throughout this subsection, $\Phi$ is a fixed smooth non-negative
		function supported in $[1/2,4]$, equal to $1$ on $[1,2]$, and satisfying
		$\Phi^{(j)}(x)\ll_j1$ for every $j\geq0$.  Furthermore, let
		$A(s)=\sum_{n\leq T^{\vartheta}}a(n)n^{-s}$, where
		$a(n)\ll_\varepsilon n^\varepsilon$ and
		$\vartheta<\frac14-\varepsilon$.}
	
	{For the zeta case, we use
		\cite[Theorem~1.1 and the first remark thereafter]{BettinBuiLiRadziwill}.}

	{We keep the additive errors $O(T^{1-\eta})$, $\eta>0$,
		in the twisted estimates.  Section~\ref{momentComputationsSection}
		absorbs them into bounds of size $T^{1-o(1)}$.}
	\subsubsection{The Riemann zeta case}
	{For
		$\boldsymbol\alpha=(\alpha_1,\ldots,\alpha_4)$ satisfying
		$|\alpha_j|\ll(\log T)^{-1}$ for $1\leq j\leq4$, put
		\[
		\sigma_{\alpha_1,\alpha_2}(n)
		=\sum_{n_1n_2=n}n_1^{-\alpha_1}n_2^{-\alpha_2},
		\]
		\[
		A_{\alpha_1,\alpha_2,\alpha_3,\alpha_4}
		=\frac{\prod_{i=1}^{2}\prod_{j=3}^{4}
			\zeta(1+\alpha_i+\alpha_j)}
		{\zeta(2+\alpha_1+\alpha_2+\alpha_3+\alpha_4)},
		\]
		and
		\[
		B_{\alpha_1,\alpha_2,\alpha_3,\alpha_4,n}
		=\prod_{p^\nu\Vert n}
		\frac{\sum_{j\geq0}\sigma_{\alpha_1,\alpha_2}(p^j)
			\sigma_{\alpha_3,\alpha_4}(p^{j+\nu})p^{-j}}
		{\sum_{j\geq0}\sigma_{\alpha_1,\alpha_2}(p^j)
			\sigma_{\alpha_3,\alpha_4}(p^j)p^{-j}}.
		\]
		The Euler products in this subsection are understood by meromorphic
		continuation in a neighborhood of $\boldsymbol0$.  Finally set
		\[
		Z_{\alpha_1,\alpha_2,\alpha_3,\alpha_4,n,m}
		=A_{\alpha_1,\alpha_2,\alpha_3,\alpha_4}
		B_{\alpha_1,\alpha_2,\alpha_3,\alpha_4,n}
		B_{\alpha_3,\alpha_4,\alpha_1,\alpha_2,m}.
		\]
	}
	\begin{lem}\label{twistedFourthMomentUpperBoundMultiplicative}
		Let $$G(z_1,z_2,z_3,z_4)\coloneq \sum_{n,m} \frac{a(n)\overline{a(m)}}{[m,n]} B_{z_1,z_2,z_3,z_4,\frac{n}{(m,n)}}B_{z_3,z_4,z_1,z_2,\frac{m}{(m,n)}}.$$ Then {
			$$\int_{\mathbb{R}} |\zeta(\tfrac12+it)|^4|A(\tfrac12+it)|^2 \Phi\left(\frac{t}{T}\right)\dt \ll T(\log T)^4 \max_{\substack{|z_j|=3^j/\log T, \\ 1\leq j \leq 4}} |G(z_1,z_2,z_3,z_4)|+T^{1-\eta}$$
			for some $\eta=\eta(\varepsilon)>0$.}
	\end{lem}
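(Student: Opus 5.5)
We derive the lemma from the twisted fourth moment asymptotic of Bettin--Bui--Li--Radziwi\l\l\ (\cite[Theorem~1.1 and the first remark thereafter]{BettinBuiLiRadziwill}), followed by a contour-integral representation of the main term. Expanding $|A(\tfrac12+it)|^2=\sum_{n,m}a(n)\overline{a(m)}n^{-1/2-it}m^{-1/2+it}$ and applying the shifted formula with shifts $\boldsymbol\alpha$, we get
\[
\int|\zeta\zeta\zeta\zeta|\,|A|^2\Phi(t/T)\,dt=\sum_{n,m}\frac{a(n)\overline{a(m)}}{\sqrt{nm}}\int \Phi(t/T)\sum_{\text{swaps}}\Bigl(\tfrac{t}{2\pi}\Bigr)^{-(\cdots)}Z_{\ldots,\frac{n}{(n,m)},\frac{m}{(n,m)}}\frac{(n,m)}{\sqrt{nm}}\cdots\,dt+O(T^{1-\eta}).
\]
Here the sum runs over the six permutations exchanging $\{\alpha_1,\alpha_2\}$ with $\{-\alpha_3,-\alpha_4\}$. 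The range $\vartheta<\tfrac14-\varepsilon$ and $a(n)\ll n^\varepsilon$ are exactly the hypotheses that give the power-saving error term.

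Next we factor out $\tfrac{(n,m)}{\sqrt{nm}}\cdot\tfrac1{\sqrt{nm}}=\tfrac1{[n,m]}$. The arithmetic dependence on $n,m$ is then entirely contained in $\frac{1}{[n,m]}B_{\cdot,n/(n,m)}B_{\cdot,m/(n,m)}$, so summing over $n,m$ produces exactly $G$ evaluated at the permuted shifts. Thus the main term equals
\[
\int\Phi(t/T)\sum_{\text{swaps}}\Bigl(\tfrac{t}{2\pi}\Bigr)^{-(\alpha_1+\alpha_2+\alpha_3+\alpha_4)\text{-type}}A_{\text{perm}}\,G(\text{perm})\,dt.
\]
The individual terms have poles at $\alpha_i+\alpha_j=0$ coming from the $\zeta(1+\alpha_i+\alpha_j)$ factors in $A$. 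The sum, however, is holomorphic in the shifts.

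To set the shifts to $0$ we use the standard device (as in Appendix~\ref{appendixClassicalTwistedMoments}, which records the contour identities): write the permutation sum as a multiple contour integral
\[
\frac{1}{(2\pi i)^4}\oint\!\cdots\!\oint \Bigl(\tfrac{t}{2\pi}\Bigr)^{\frac{1}{2}\sum(z_j-\ldots)}\frac{\Delta(z)^2\,\prod\zeta(1+z_i-z_j)\cdots}{\prod z_j^{4}}\,G(z_1,z_2,z_3,z_4)\,dz_1\cdots dz_4 ,
\]
over circles $|z_j|=3^j/\log T$. These radii are pairwise distinct, so the $z_i\pm z_j$ stay $\gg 1/\log T$ away from zero and no new poles are crossed. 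We then bound the integrand trivially. Each $\zeta(1+z_i+z_j)\ll\log T$, giving $(\log T)^4$ from the four $\zeta$ factors of $A$. The Vandermonde-type numerator and the $z_j^{-4}$ denominator contribute a net power of $\log T$ which, combined with the lengths $\asymp1/\log T$ of the circles, cancels to $O(1)$. The factor $(t/2\pi)^{O(1/\log T)}$ is $O(1)$, the factor $1/\zeta(2+\cdots)$ is $O(1)$, and the integral over $t$ gives $\ll T$. Taking the maximum of $|G|$ over the circles then gives the claimed $T(\log T)^4\max|G|+T^{1-\eta}$.

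The main obstacle is the bookkeeping in this contour identity: verifying that the residue sum reproduces exactly the six-term permutation sum with the correct $B$-factors attached, so that the $n,m$ sum is literally $G$ at the contour points. We must also check that the powers of $\log T$ balance to exactly $(\log T)^4$ rather than something larger. We would first check the degenerate case $a=\delta_1$, where the bound must reduce to the classical $T(\log T)^4$ fourth moment, to calibrate the powers.
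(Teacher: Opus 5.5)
Your proposal is correct and follows the paper's proof essentially step for step. You apply the BBLR shifted formula, recognise the $n,m$-sum as $G$ at the permuted shifts, and write the six-term sum as the four-fold contour integral over $|z_j|=3^j/\log T$. The paper obtains this identity, which you flag as the main bookkeeping obstacle, directly from \cite[Lemma~2.5.1]{ConreyFarmerKeatingRubinsteinSnaith} with signed variables $(z_1,z_2,-z_3,-z_4)$, so that $\Delta(z_1,z_2,-z_3,-z_4)^2$ cancels the poles of $\zeta(1+z_i+z_j)$. You then let the shifts tend to $0$ and bound trivially using $\Delta^2\ll(\log T)^{-12}$, the denominator $z_1^4z_2^4z_3^4z_4^4$, circle lengths $\asymp1/\log T$ and $A_{z}\ll(\log T)^4$, exactly as the paper does. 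The only blemishes are cosmetic and do not affect the bound: the zeta factors in your integrand should read $\zeta(1+z_i+z_j)$ with $i\in\{1,2\}$, $j\in\{3,4\}$, and the constant $\tfrac14$ from the contour lemma is omitted.
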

	{Its proof is given in
		Appendix~\ref{appendixClassicalTwistedMoments}.}
	
	\subsubsection{The Dirichlet $L$-function case}
	{Let $\chi$ be a primitive Dirichlet character modulo
		$q$, and keep the small-shift condition
		$|\alpha_j|\ll(\log T)^{-1}$ for $1\leq j\leq4$. Write
		$A_{\boldsymbol\alpha}:=A_{\alpha_1,\alpha_2,\alpha_3,\alpha_4}$
		for the factor defined in the zeta case, and put
		\[
		S_p(\boldsymbol\alpha)=\sum_{j\geq0}
		\frac{\sigma_{\alpha_1,\alpha_2}(p^j)
			\sigma_{\alpha_3,\alpha_4}(p^j)}{p^j}
		\]
		and define, meromorphically near $\boldsymbol0$,
		\[
		A^{\chi}_{\boldsymbol\alpha}
		=A_{\boldsymbol\alpha}\prod_{p\mid q}
		S_p(\boldsymbol\alpha)^{-1}.
		\]
		This factor depends on $\chi$ only through $q$. For
		$\Re(\alpha_i+\alpha_j)>0$ ($i\leq2<j$),
		$A_{\boldsymbol\alpha}=\prod_p S_p(\boldsymbol\alpha)$, so deleting
		the factors at primes dividing $q$ gives
		$A^\chi_{\boldsymbol\alpha}=\prod_{p\nmid q}S_p(\boldsymbol\alpha)$.}
	Furthermore, we shall denote the twist of $\sigma_{\alpha_1,\alpha_2}$ by a Dirichlet character by $\sigma^{\chi}_{\alpha_1,\alpha_2}(n)\coloneq \sigma_{\alpha_1,\alpha_2}(n)\chi(n)$. Similar to the zeta case, we define $$B^{\chi}_{\alpha_1,\alpha_2,\alpha_3,\alpha_4,n} \coloneq \prod_{p^\nu \mid\mid n} \frac{\sum_{j\geq 0} \sigma^{\chi}_{\alpha_1,\alpha_2}(p^j)\sigma^{\overline{\chi}}_{\alpha_3,\alpha_4}(p^{j+\nu})p^{-j}}{\sum_{j\geq 0} \sigma^{\chi}_{\alpha_1,\alpha_2}(p^j)\sigma^{\overline{\chi}}_{\alpha_3,\alpha_4}(p^{j})p^{-j}}.$$ 
	
	{Put
		\[
		Z^\chi_{\alpha_1,\alpha_2,\alpha_3,\alpha_4,n,m}
		=A^\chi_{\alpha_1,\alpha_2,\alpha_3,\alpha_4}
		B^\chi_{\alpha_1,\alpha_2,\alpha_3,\alpha_4,n}
		B^{\overline\chi}_{\alpha_3,\alpha_4,\alpha_1,\alpha_2,m}.
		\]
	}
	
	\begin{lem}\label{twistedFourthMomentUpperBoundMultiplicativeDirichlet}
		Let $\chi$ be a fixed primitive character modulo $q$, and put
		\[
		G^{\chi}(z_1,z_2,z_3,z_4)
		=\sum_{n,m}\frac{a(n)\overline{a(m)}}{[m,n]}
		B^{\chi}_{z_1,z_2,z_3,z_4,\frac{n}{(m,n)}}
		B^{\overline\chi}_{z_3,z_4,z_1,z_2,\frac{m}{(m,n)}}.
		\]
		Then
		\begin{align*}
			&\int_{\mathbb R}|L(\tfrac12+it,\chi)|^4
			|A(\tfrac12+it)|^2\Phi(t/T)\,dt\\
			&\quad\ll_{\chi,\varepsilon}
			T(\log T)^4
			\max_{\substack{|z_j|=3^j/\log T\\1\leq j\leq4}}
			|G^\chi(z_1,z_2,z_3,z_4)|+T^{1-\eta},
		\end{align*}
		where $\eta=\eta(\varepsilon,\vartheta)>0$ depends on the fixed gap
		between $\vartheta$ and $1/4$.
	\end{lem}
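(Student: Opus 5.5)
The plan is to deduce the bound from a shifted twisted fourth moment asymptotic for $L(s,\chi)$ in the $t$-aspect. The main term will be written as a contour integral, and the circles $|z_j|=3^j/\log T$ are chosen so that the arithmetic factor $A^\chi$ and the Vandermonde-type kernel can be bounded trivially there.

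\textbf{Step 1: shifted moment asymptotic.} For shifts $|\alpha_j|\ll(\log T)^{-1}$, the first step is the asymptotic
\[
\int_{\mathbb R}L(\tfrac12+\alpha_1+it,\chi)L(\tfrac12+\alpha_2+it,\chi)L(\tfrac12-\alpha_3-it,\overline\chi)L(\tfrac12-\alpha_4-it,\overline\chi)|A(\tfrac12+it)|^2\Phi(t/T)\,dt.
\]
Its main term should be
\[
\sum_{n,m}\frac{a(n)\overline{a(m)}}{[m,n]}\sum_{\pi}\Bigl(\frac{qt}{2\pi}\Bigr)^{-(\cdots)}Z^\chi_{\pi(\boldsymbol\alpha),\frac{n}{(m,n)},\frac{m}{(m,n)}}\Phi(t/T).
\]
Here $\pi$ runs over the six swaps $\{\alpha_1,\alpha_2\}\leftrightarrow\{-\alpha_3,-\alpha_4\}$ familiar from the recipe, the power of $qt/2\pi$ is the standard one for each swap, and the error is $O(T^{1-\eta})$. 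To obtain this I would follow the argument of \cite{BettinBuiLiRadziwill} for $\zeta$ line by line:
\begin{itemize}[label={}]
\item the approximate functional equation for the product of four $L$-functions, with root number and conductor $q^2(t/2\pi)^2$;
\item Montgomery--Vaughan type diagonal extraction, where the character values $\chi(n_1n_2)\overline\chi(n_3n_4)$ force coprimality to $q$;
\item the off-diagonal terms via the shifted convolution/Estermann-function analysis.
\end{itemize}
Since $q$ is fixed, the characters only split the divisor sums into residue classes mod $q$ and introduce Gauss-sum factors. The deletion of the Euler factors at $p\mid q$ is exactly what produces $A^\chi_{\boldsymbol\alpha}=A_{\boldsymbol\alpha}\prod_{p\mid q}S_p(\boldsymbol\alpha)^{-1}$ and the twisted $B^\chi$ factors. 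The admissible length $\vartheta<\tfrac14-\varepsilon$ is the same as in the zeta case, with $\eta$ depending on the gap $\tfrac14-\vartheta$.

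\textbf{Step 2: contour-integral representation.} Next I would combine the six swap terms into a single contour integral, in the manner of Conrey--Farmer--Keating--Rubinstein--Snaith. At $\boldsymbol\alpha=\boldsymbol0$ the sum equals
\[
\frac{1}{(2\pi i)^4}\oint\!\cdots\!\oint \Bigl(\frac{qt}{2\pi}\Bigr)^{\frac12(z_1+z_2-z_3-z_4)}\,\frac{\Delta(z)^2\,Z^\chi_{z,\frac{n}{(m,n)},\frac{m}{(m,n)}}}{\prod_j z_j^4}\,dz
\]
up to a harmless constant, with $\Delta$ the appropriate Vandermonde product. The contours can be taken to be the circles $|z_j|=3^j/\log T$, since the integrand is holomorphic apart from the poles enclosed.

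\textbf{Step 3: bounding on the circles.} On these circles, for $i\leq2<j$ one has $|z_i+z_j|\asymp(\log T)^{-1}$. Therefore $\zeta(1+z_i+z_j)\ll\log T$, the denominator $\zeta(2+\cdots)$ is bounded below, and the finitely many factors $S_p^{-1}$ with $p\mid q$ are $O_q(1)$. Hence $|A^\chi_{\boldsymbol z}|\ll(\log T)^4$. For $t\asymp T$ the factor $(qt/2\pi)^{\cdots}$ is $O(1)$. Counting powers of $\log T$:
\begin{itemize}[label={}]
\item $\Delta(z)^2$ contributes $(\log T)^{-12}$;
\item $\prod z_j^{-4}$ contributes $(\log T)^{16}$;
\item the contour lengths contribute $(\log T)^{-4}$;
\item $A^\chi$ contributes $(\log T)^4$.
\end{itemize}
The total is $(\log T)^4$. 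After interchanging the $n,m$ summation with the integral, the remaining factor is exactly $G^\chi(z_1,z_2,z_3,z_4)$, which is bounded by its maximum over the circles. Integrating the $t$-weight $\Phi(t/T)$ gives the factor $T$, and the error term supplies $T^{1-\eta}$.

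\textbf{Main obstacle.} I expect Step 1 to be the hardest part: establishing the twisted shifted fourth moment for $L(s,\chi)$ with Dirichlet polynomials of length $T^{1/4-\varepsilon}$, and in particular the off-diagonal analysis. My first attempt would be to check that each estimate in \cite{BettinBuiLiRadziwill} used for the twisted divisor problem goes through with $d(n)$ replaced by $d(n)\chi(n)$, by expanding $\chi$ in additive characters mod $q$. Since $q$ is fixed, this only costs factors of $O_q(1)$.
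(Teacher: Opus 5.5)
Your proposal is correct and follows essentially the same route as the paper. The paper likewise takes the shifted twisted fourth-moment formula for $L(s,\chi)$ as the analogue of \cite[Theorem~1.1]{BettinBuiLiRadziwill}: $\chi$ is inserted into the shifted divisor coefficients, the Euler factors at $p\mid q$ are deleted, and $t/2\pi$ is replaced by $qt/2\pi$. It then writes the six swap terms as the CFKRS contour integral, lets $\boldsymbol\alpha\to\boldsymbol 0$, and estimates trivially on the circles $|z_j|=3^j/\log T$ using exactly your count $\Delta^2\ll(\log T)^{-12}$, $A^\chi\ll(\log T)^4$.
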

	
	{Its justification is given in
		Appendix~\ref{appendixClassicalTwistedMoments}.}
	
	\subsection{Twisted second moments for primitive holomorphic cusp forms}\label{holomorphicCuspTwistedMomentSection}
	{Keep the weight $\Phi$ from the preceding subsection.
		Let $f$ be a primitive holomorphic cusp form of weight
		$\kappa\geq1$, level $N_f$, and unitary nebentypus $\omega_f$.  We use
		the unitary normalization, so the critical line is $\Re(s)=1/2$.  Its
		$L$-function is}
	\[
	L(s,f)=\sum_n \frac{\lambda_f(n)}{n^s}
	=\prod_{p} \frac{1}{1-\lambda_f(p)p^{-s}+\omega_f(p)p^{-2s}}.
	\]
	{See \cite[Section 5.11]{IwaniecKowalski} for further
		details and the normalization convention.}
	
	If $(n,m)=1$ and $(nm,N_f)=1$ we define the quantity $$Z_{s,n,m}^f = \prod_{p^a \| n} \left(\frac{\overline{\lambda_f(p^a)}-\overline{\omega_f(p)}p^{-s}\overline{\lambda_f(p^{a-2})}}{1+p^{-s}}\right) \prod_{p^b \| m} \left(\frac{\lambda_f(p^b)-\omega_f(p)p^{-s}\lambda_f(p^{b-2})}{1+p^{-s}}\right)$$ 
	We use the convention that $\lambda_f(p^{-1})=0$ in this formula.
	{Put}
	$$L^{\sharp}(s,f\times \widetilde{f})=\sum_{n\geq 1} \frac{|\lambda_f(n)|^2}{n^s},$$ where $\widetilde{f}$ is the contragredient to $f$; see Appendix~\ref{appendixRankinSelberg} for the relevant Rankin--Selberg background.
	{For later use we normalize the reduced local factor by
		\begin{equation}\label{normalizedCuspLocalFactor}
			\widehat Z^f_{z_1,z_2,n,m}
			:=(nm)^{-(z_1-z_2)/2}Z^f_{1+z_1-z_2,n,m}.
	\end{equation}}
	
	\begin{lem}\label{twistedSecondMomentUpperBoundMultiplicativeHolomorphicCusp}
		Let $f$ be a primitive holomorphic cusp form of weight $\kappa\geq1$,
		level $N_f$, and unitary nebentypus $\omega_f$. Assume the normalization convention {fixed immediately above}. Let
		\[
		A(s)=\sum_{n\leq T^{\vartheta}}\frac{a(n)}{n^s},
		\qquad
		a(n)\ll_\varepsilon n^\varepsilon,
		\]
		where $a(n)=0$ whenever $(n,N_f)>1$. Furthermore assume $\vartheta \leq \frac{1}{10}$. Define
		\[
		G^f(z_1,z_2)
		=
		\sum_{{n,m\leq T^{\vartheta}}}
		\frac{{a(n)\overline{a(m)}}}{[n,m]}
		{\widehat Z^f_{z_1,z_2,
				\frac{n}{(n,m)},\frac{m}{(n,m)}}}.
		\]
		Then
		\begin{align*}
			&\int_{\mathbb R}
			\left|L\left(\tfrac12+it,f\right)\right|^2
			\left|A\left(\tfrac12+it\right)\right|^2
			\Phi\left(\frac tT\right)\,dt\\
			&\quad\ll_{f,\varepsilon}
			T\log T
			\max_{\substack{|z_1|=2/\log T\\|z_2|=4/\log T}}
			\left|G^f(z_1,z_2)\right|{+T^{1-\eta}}.
		\end{align*}
		{Here $\theta$ is an admissible Ramanujan exponent for the
			spectral Maa\ss{} forms; one may take $\theta=7/64$ \cite{KimSarnak}.
			Any fixed $0<\eta<\frac12-\theta-2\vartheta$ is admissible after
			the coefficient-bound $\varepsilon$ loss.}
	\end{lem}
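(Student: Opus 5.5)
We derive \Cref{twistedSecondMomentUpperBoundMultiplicativeHolomorphicCusp} from an asymptotic formula with power saving for the twisted second moment of $L(\tfrac12+it,f)$ with shifts, followed by a contour-integral extraction of the main term in the style of \Cref{twistedFourthMomentUpperBoundMultiplicative}.

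\textbf{Step 1: shifted twisted moment.} Write
\[
|A(\tfrac12+it)|^2=\sum_{h,k\le T^\vartheta}\frac{a(h)\overline{a(k)}}{\sqrt{hk}}\left(\frac kh\right)^{it}.
\]
Introduce shifts, replacing $|L(\tfrac12+it,f)|^2$ by $L(\tfrac12+z_1+it,f)\overline{L(\tfrac12+\overline{z_2}+it,f)}$, and expand each $L$-factor by an approximate functional equation of length about $\sqrt{N_f}\,T$.

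\emph{Diagonal terms.} The terms $hm=kn$ produce the main term. This involves the Rankin--Selberg series $L^\sharp(1+z_1+z_2,f\times\widetilde f)$, which has a simple pole at $z_1+z_2=0$ (see the appendix on Rankin--Selberg convolutions), times local correction factors. Evaluating these factors at primes dividing $h/(h,k)$ and $k/(h,k)$ via the Hecke relations
\[
\lambda_f(p)\lambda_f(p^b)=\lambda_f(p^{b+1})+\omega_f(p)\lambda_f(p^{b-1})
\]
yields exactly $Z^f_{s,n,m}$ with $s=1+z_1-z_2$, after the normalization \eqref{normalizedCuspLocalFactor}. The factor $(1+p^{-s})^{-1}$ comes from dividing the local factor of $L^\sharp$ by the local factor of $\zeta(2s)$. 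The second main term, arising from the functional equation, is the same expression with the shifts swapped and multiplied by $(t/2\pi)^{-z_1-z_2}$ times a root-number factor of modulus $1$.

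\emph{Off-diagonal terms.} The terms $hm\neq kn$ amount to a shifted convolution problem for $\lambda_f$ with shifts $h,k\le T^\vartheta$. We bound them using the spectral decomposition (Blomer-type bounds for shifted convolution sums of holomorphic forms). The Maa\ss{} spectrum enters through $\theta$ and produces an error $T^{1/2+\theta+2\vartheta+\varepsilon}$. This gives the stated admissible range $\eta<\tfrac12-\theta-2\vartheta$, which is nonempty since $\vartheta\le1/10$ and $\theta=7/64$.

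\textbf{Step 2: contour representation.} The sum of the two main terms is holomorphic in the shifts; the poles cancel. Following the standard device, I write the combination as a double contour integral
\[
\frac{1}{(2\pi i)^2}\oint\oint \frac{(\text{integrand in } z_1,z_2)\,(z_1+z_2)^{\ast}}{z_1z_2}\,dz_1\,dz_2
\]
with a suitable polynomial factor in the numerator, taken over the circles $|z_1|=2/\log T$ and $|z_2|=4/\log T$, and then set the shifts to zero. The radii differ so that $|z_1+z_2|\asymp(\log T)^{-1}$ on the contours, which keeps the pole of $L^\sharp$ at distance $\gg1/\log T$.

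On these circles:
\begin{itemize}
\item $L^\sharp(1+z_1+z_2)\ll\log T$;
\item $(t/2\pi)^{-z_1-z_2}\ll1$;
\item the lengths of the contours and the denominators cancel.
\end{itemize}
Trivially bounding the integral by the maximum over the contours gives $T\log T\max|G^f(z_1,z_2)|$. The $n,m$ sum is reorganized by writing $hm=kn$, so that $h/(h,k)$ and $k/(h,k)$ index the local factors. This reproduces $1/[n,m]$ times $\widehat Z^f$.

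\textbf{Step 3: positivity and main obstacle.} The weight $\Phi\ge0$ with $\Phi=1$ on $[1,2]$ lets us pass from the smoothed integral to the stated form directly.

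The main obstacle is the off-diagonal estimate. It needs a uniform shifted-convolution bound for $\sum_n\lambda_f(n)\overline{\lambda_f(n+\ell)}W(\cdot)$ for general level and nebentypus, including weight one, with polynomial dependence on $h,k$. My plan is to cite the known result, the twisted second moment of $\mathrm{GL}(2)$ $L$-functions in $t$-aspect (e.g.\ Bernard's or Blomer's theorem), and to verify that its proof extends to nontrivial nebentypus. This holds via the Kuznetsov formula for $\Gamma_0(N_f)$ with character.
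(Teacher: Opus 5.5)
Your proposal follows essentially the paper's route. The off-diagonal part of the shifted twisted second moment is controlled by spectral bounds for shifted convolution sums, with error $O(T^{1/2+\theta+2\vartheta+\varepsilon})$: the paper cites Andersen--Thorner, Proposition~3.4, for trivial nebentypus, and Blomer--Harcos, Theorem~2 and Remark~12 with the erratum, for general nebentypus including weight one, observing that the spectrum entering there has trivial central character because $\omega_f\overline{\omega_f}=1$. The diagonal terms are identified with $L^\sharp Z^f$ through the Hecke relations, and the main terms are written as a contour-residue expression on the stated circles with the poles cancelled.

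Two pieces of bookkeeping should be tightened. First, use one sign convention for the shifts, since you pair $L^\sharp(1+z_1+z_2)$ with $s=1+z_1-z_2$. Second, in the CFKRS-type representation the kernel after setting the shifts to zero is $(z_1\mp z_2)^2/(z_1^2z_2^2)$. So it is the numerator, the denominators and the contour lengths together that cancel, leaving exactly the factor $\log T$ from $L^\sharp$.
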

	{For trivial nebentypus, the off-diagonal estimate is
		\cite[Proposition~3.4]{AndersenThorner}.  For general nebentypus,
		including weight one, use
		\cite[Theorem~2 and Remark~12, with the erratum]{BlomerHarcosErratum}
		with both input representations equal to that of $f$.
		The coefficient product is then
		$\lambda_f(a)\overline{\lambda_f(b)}$, and the output spectrum has
		trivial central character, so Andersen--Thorner's Proposition~5.5
		follows by the same proof.  The Hecke relations give the diagonal factors
		$L^\sharp Z^f$.  Opening $|A|^2$ and expressing the two diagonal
		terms as residues on the stated circles, with their poles cancelled,
		gives the bound above.  The summed error is
		$O_{f,\varepsilon}(T^{1/2+\theta+2\vartheta+\varepsilon})$.}
	
	\subsection{Moments of products of Dirichlet polynomials} 
	{We shall often twist by a product of Dirichlet polynomials.  Let
		$D_j(s)=\sum_{n\in\mathcal D_j}a_j(n)n^{-s}$ for $1\leq j\leq R$, with
		$a_j(n)\ll_\varepsilon n^\varepsilon$, extended by zero off
		$\mathcal D_j$. Assume $|a_j(1)|\leq1$ and that the coefficient bounds
		are uniform in $j$ and $R$. In every application the prime
		supports are disjoint, equivalently
		\[
		n\in\mathcal D_i,\ m\in\mathcal D_j,\ i\neq j
		\quad\Longrightarrow\quad(n,m)=1.
		\]
		The factorization $n=n_1\cdots n_R$ with $n_j\in\mathcal D_j$ is
		unique when it exists, and at most $\omega(n)$ of the $n_j$ exceed $1$.
		Thus the coefficients $a(n)$ of $\prod_jD_j(s)$ satisfy, uniformly in $R$,
		\[
		|a(n)|\leq C_\varepsilon^{\omega(n)}n^{\varepsilon/2}
		\ll_\varepsilon n^\varepsilon,
		\]
		where $C_\varepsilon\geq1$ is the common coefficient constant for
		exponent $\varepsilon/2$ and $\omega(n)$ counts the distinct prime
		divisors of $n$.}
	
	\begin{lem}\label{splittingDirichletPolynomialsLemma}
		Let $D_1(s),\dots,D_R(s)$ be as defined above. Assume that $$\max_{n_1 \in \mathcal{D}_1, \dots, n_R \in \mathcal{D}_R} n_1\cdots n_R \leq T^{1-\varepsilon}.$$ Then {$$\frac{1}{T}\int_{T}^{2T} \left|\prod_{j=1}^R D_j(\tfrac12+it)\right|^2\, \textup{d}t = (1+o(1))\prod_{j=1}^{R} \frac{1}{T}\int_T^{2T} |D_j(\tfrac12+it)|^2\, \textup{d}t. $$}
	\end{lem}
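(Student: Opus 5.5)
Write $D(s)=\prod_{j=1}^R D_j(s)=\sum_n a(n)n^{-s}$. The plan is to expand both sides with the Montgomery--Vaughan mean value theorem and then use the coprimality of the supports to factor the diagonal.

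By hypothesis every $n$ in the support of $D$ satisfies $n\le N:=T^{1-\varepsilon}$. Montgomery--Vaughan applied to $D(\tfrac12+it)$, i.e.\ to the coefficients $a(n)n^{-1/2}$, gives
\[
\frac1T\int_T^{2T}|D(\tfrac12+it)|^2\,dt=\sum_{n}\frac{|a(n)|^2}{n}\Bigl(1+O\bigl(\tfrac nT\bigr)\Bigr)=\bigl(1+O(T^{-\varepsilon})\bigr)\sum_n\frac{|a(n)|^2}{n}.
\]
Since all terms are nonnegative, the relative error $O(n/T)\le O(T^{-\varepsilon})$ holds uniformly. In the same way, each factor has products of supports bounded by $T^{1-\varepsilon}$; in particular each $n_j\in\mathcal D_j$ satisfies $n_j\le T^{1-\varepsilon}$, because the other factors may be taken to be $1$. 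This uses $1\in\mathcal D_i$; if $1\notin\mathcal D_i$, the bound for $n_j$ still holds, since every element is at least $1$. So
\[
\frac1T\int_T^{2T}|D_j(\tfrac12+it)|^2\,dt=\bigl(1+O(T^{-\varepsilon})\bigr)\sum_{n\in\mathcal D_j}\frac{|a_j(n)|^2}{n}.
\]

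Next I will use the disjointness of prime supports. The factorization $n=n_1\cdots n_R$ with $n_j\in\mathcal D_j$ is unique, so $a(n)=\prod_j a_j(n_j)$, and therefore
\[
\sum_n\frac{|a(n)|^2}{n}=\prod_{j=1}^R\sum_{n_j\in\mathcal D_j}\frac{|a_j(n_j)|^2}{n_j}.
\]
To see the injectivity concretely, note that distinct tuples give distinct products: any two entries from different sets are coprime, so each $n_j$ is recovered as the part of $n$ supported on the primes of $\mathcal D_j$. Combining this with the two displays gives the claim, with the product of the individual relative errors equal to $\prod_j(1+O(T^{-\varepsilon}))$.

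That last product is the main obstacle. If $R$ grows with $T$, then $\prod_j(1+O(T^{-\varepsilon}))$ is not obviously $1+o(1)$. I would handle it in one of two ways. The first is to avoid expanding the factors individually: write each factor's exact Montgomery--Vaughan expression as the main term plus an error bounded by $\frac1T\sum n|a_j(n)|^2/n$. Then note that only factors with some support element other than $1$ matter; all others are constants, for which the statement is trivial. Alternatively, I would bound $R$ in terms of $\log T$. Every nontrivial factor contributes at least one prime $\ge 2$ to the maximal product, so the number of nontrivial factors is at most $\log_2 T$. Then $\prod_j(1+O(T^{-\varepsilon}))=\exp(O(T^{-\varepsilon}\log T))=1+o(1)$. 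This requires the $O$-constant in Montgomery--Vaughan to be absolute, which it is, so the conclusion follows uniformly.
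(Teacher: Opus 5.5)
Your proof is correct and follows the paper's argument: Montgomery--Vaughan on the product and on each factor, with the injectivity of $(n_1,\dots,n_R)\mapsto n_1\cdots n_R$ (coming from the coprime supports) factoring the diagonal sum $\sum_n |a(n)|^2/n$. You also check that the $R$ relative errors multiply to $1+o(1)$ even when $R$ grows with $T$: there are at most $\log_2 T$ nonconstant factors, each with relative error $O(T^{-\varepsilon})$ with an absolute constant. The paper passes over this point silently in ``applied twice,'' and your second resolution of it is sound.
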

	
	\begin{proof}
		Let $$\prod_{j=1}^R D_j(s)=\sum_{n} \frac{a(n)}{n^s}.$$ Consider the map $$\Psi: \mathcal{D}_1\times \cdots \times \mathcal{D}_R \to \mathbb{N}, (n_1,\dots,n_R)\mapsto n_1\cdots n_R.$$ Because of the $\gcd$-assumption mentioned above, the fundamental theorem of arithmetic implies that this map is injective. Therefore {$$\sum_{n} \frac{|a(n)|^2}{n} = \sum_{n_1 \in \mathcal{D}_1,\dots ,n_R \in \mathcal{D}_R} \frac{|a_1(n_1)\cdots a_R(n_R)|^2}{n_1\cdots n_R} = \prod_{j=1}^R \sum_{n_j\in \mathcal D_j} \frac{|a_j(n_j)|^2}{n_j}.$$} By the Montgomery--Vaughan mean value theorem applied twice we thus get
		\begin{align*}
			\frac{1}{T}\int_{T}^{2T} \left|\prod_{j=1}^R D_j(\tfrac12+it)\right|^2\, \textup{d}t &= (1+o(1)) \sum_{n} \frac{|a(n)|^2}{n} \\
			&= (1+o(1))\prod_{j=1}^{R} \sum_{n_j \in \mathcal{D}_j} \frac{|a_j(n_j)|^2}{n_j} \\
			&= (1+o(1))\prod_{j=1}^R \frac{1}{T} \int_T^{2T} |D_j(\tfrac12+it)|^2\,\textup{d}t.
		\end{align*}
	\end{proof}
	
	The following lemma extends Soundararajan's high powers lemma
	\cite{Sound1} to products with almost orthogonal coefficients,
	allowing us to estimate high powers of several prime sums together
	with truncated exponentials.
	
	\begin{lem}\label{highPowersAndSomeSquaresUntwistedLemma}
		{Throughout this statement $r,\mu,\nu$ and the constants in
			the coefficient bounds are fixed.} Let $\mathcal{P}$ be a finite set of primes and let $$\mathcal{P}_{j}(s)=\sum_{p \in \mathcal{P}} \frac{b_j(p)}{p^s},$$ for $1\leq j \leq r$. Let $\mu,\nu$ be non-negative integers such that $\mu+\nu=r$. For $1\leq j \leq \nu$ $$\mathcal{N}_j(s)=\sum_{k=0}^{X_j} \frac{(\mathcal{P}_{\mu+j}(s))^k}{k!} = \sum_{\substack{p \mid n \implies p \in \mathcal{P}, \\ \Omega(n)\leq X_j}} \frac{c_j(n)}{n^s},$$ where $X_j$ is a natural number. 
		
		Assume the coefficients satisfy, for $i\neq j$, an almost-orthogonality relation $$V_{i,j}=\sum_{p \in \mathcal{P}} \frac{b_{i}(p)\overline{b_j(p)}}{p}=O(1),$$ where the constant is independent of $\mathcal{P}$. For $i=j$, we let $$V_j = \sum_{p \in \mathcal{P}} \frac{|b_j(p)|^2}{p}.$$
		
		Finally let $\ell_1,\dots,\ell_\mu$ be positive integers such that $\ell_j \ll V_j$. Then, if the length of the Dirichlet polynomial $\prod_{1\leq j \leq \mu} \mathcal{P}_j^{\ell_j} \prod_{1\leq j \leq \nu} \mathcal{N}_{j}$ is $\ll T$, we have $$\int_{T}^{2T} \prod_{j=1}^\mu \left|\mathcal{P}_j(\tfrac12+it)\right|^{2\ell_j} \prod_{j=1}^{\nu} |\mathcal{N}_j(\tfrac12+it)|^2 \dt \ll T\left(\prod_{j=1}^\mu \ell_j! V_j^{\ell_j}\right)\left( \prod_{j=1}^{\nu} \exp\left( V_{j+\mu}\right)\right).$$ 
	\end{lem}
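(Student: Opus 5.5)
The plan is to expand everything into a single Dirichlet polynomial and apply the Montgomery--Vaughan mean value theorem. We then use the almost-orthogonality of the $b_j$ to reduce the resulting diagonal sum to a product of independent factors, one for each $j$. Write
\[
\prod_{j=1}^\mu\mathcal P_j(s)^{\ell_j}\prod_{j=1}^\nu\mathcal N_j(s)=\sum_n\frac{a(n)}{n^s}.
\]
Since the length is $\ll T$, Montgomery--Vaughan gives
\[
\int_T^{2T}|\cdots|^2\,dt\ll T\sum_n\frac{|a(n)|^2}{n}.
\]
The coefficient $a(n)$ is a sum over factorizations $n=m_1\cdots m_\mu n_1\cdots n_\nu$, where each $m_j$ has $\Omega(m_j)=\ell_j$ with weight $\ell_j!\,g_j(m_j)$, and each $n_j$ carries weight $c_j(n_j)$. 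Here $g_j$ is the multiplicative weight $\prod_{p^a\|m}b_j(p)^a/a!$ coming from the multinomial theorem. Thus $|a(n)|^2$ is a double sum over pairs of such factorizations of the same $n$.

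The key step is to bound this double sum without discarding the off-diagonal pairings. For this we pass to the formal generating-function viewpoint. Work prime by prime: $\sum_n|a(n)|^2/n$ is bounded by the expression obtained by treating each prime's contribution as a sum over assignments of its multiplicity to the $r$ ``slots'' on the left and on the right. I would first drop the truncations $\Omega(n_j)\le X_j$. This is legitimate after bounding via Rankin's trick: the $\mathcal N_j$ agree with $\exp(\mathcal P_{\mu+j})$ on coefficients, and removing the truncation only enlarges a sum of $|\cdot|^2$ once one majorizes by absolute values, though this needs care since the coefficients are complex. The contribution is then a coefficient extraction: $\prod_j\ell_j!^2$ times the coefficient of $\prod_j (u_j\bar v_j)^{\ell_j}$, for $j\le\mu$, in
\[
\prod_{p\in\mathcal P}\sum_{k\ge0}\frac{1}{p^k}\Bigl|\sum_{\text{comp.}}\cdots\Bigr|^2\approx\exp\Bigl(\sum_{i,j}w_i\bar w_j V_{i,j}\Bigr)(1+\text{small}),
\]
where $w_j=u_j$ for the power slots and $w_j=1$ for the $\mathcal N$ slots. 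The squarefree part dominates, and prime-square terms contribute $\exp(O(\sum_p |b|^4/p^2))=O(1)$.

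Next we extract the coefficients. Expanding $\exp\bigl(\sum_j|w_j|^2V_j+\sum_{i\ne j}w_i\bar w_jV_{i,j}\bigr)$, the diagonal pairings give exactly $\prod_j\ell_j!\,V_j^{\ell_j}$ times $\exp(\sum_jV_{\mu+j})$. The cross terms have $V_{i,j}=O(1)$. A pairing that matches $s$ of the $\ell_i$ primes of slot $i$ with slot $j$ costs a factor roughly $\ell_i^{s}\ell_j^{s}V_{i,j}^s/(s!\,V_i^{s/2}V_j^{s/2}\cdots)$ relative to the diagonal. Since $\ell_j\ll V_j$, these ratios sum to $\exp(O(1))$. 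Concretely, the coefficient of $u^{\ell}\bar u^{\ell}$ in $\exp(|u|^2V+uA+\bar uB+C)$ is bounded by $\ell!^{-1}V^{\ell}\exp(O(|A||B|\ell/V^{\,}+\ldots))$, and $\ell\ll V$ makes the correction $O(1)$. To avoid interpolating through non-integral terms, I would do this rigorously via Cauchy's integral formula on circles $|u_j|=\sqrt{\ell_j/V_j}$. The saddle point gives $\ell_j!^2\cdot V_j^{\ell_j}/\ell_j!\cdot e^{O(1)}$, and the cross terms $|u_i||u_j||V_{i,j}|\ll1$ remain bounded on these circles.

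I expect the main obstacle to be making the dropping of the truncation in the $\mathcal N_j$ rigorous with complex coefficients. This is what justifies the exponential generating function. My first approach is to bound $\sum|a(n)|^2/n$ by writing the truncated $c_j(n)$ as the untruncated coefficient $\prod_{p^a\|n}b(p)^a/a!$ restricted to $\Omega(n)\le X_j$. Then I would use the triangle inequality inside $|a(n)|^2$ only across the finitely many $r$ slots. That reduces the problem to positive quantities with $|b_j(p)|$, for which off-diagonal slot interactions are controlled by Cauchy--Schwarz per prime, $|b_i\bar b_j|\le\ldots$. If this loses too much, because only $V_{i,j}$ and not $\sum|b_ib_j|/p$ is bounded, the fallback is to keep the exact Montgomery--Vaughan diagonal and evaluate it with the Cauchy-integral representation in all variables simultaneously. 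In that version the truncation is handled by an extra contour variable $z_j$ on $|z_j|=1$, using $E_X(x)=\frac1{2\pi i}\oint e^{xz}\,\frac{z^{-X-1}(1-z^{X+1})}{1-z}\,dz$-type identities, or simply by bounding the tail since $X_j$ is large.
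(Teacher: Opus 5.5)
There is a genuine gap, and it is the one you flag yourself: the truncations $E_{X_h}$ in the $\mathcal N_h$. None of your suggested fixes closes it.

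Truncating the individual factors does not restrict the set of $n$. It removes summands from the convolution that defines $a(n)$, and with complex $b_j(p)$ a partial sum can exceed the full sum in modulus. So neither Rankin's trick nor ``removing the truncation only enlarges'' applies.

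Using the triangle inequality across slots at each prime replaces the aggregated $V_{i,j}$ by $\sum_p|b_i(p)b_j(p)|/p$. That quantity is not $O(1)$: for $\zeta$ against $L(s,\chi)$ it is essentially $V$. The cross terms then cost a factor of size $e^{cV}$ rather than $O(1)$, which destroys exactly the almost-orthogonality the lemma relies on.

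The contour representation on $|z_h|=1$ also loses. The kernel $\sum_{m\le X}z^{-m-1}$ has $L^1$-norm $\asymp\log X$ there, while a radius $R\neq1$ instead costs $e^{(R^2-1)V_{\mu+h}}$ or $R^{-2X_h}$.

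Finally, the lemma places no lower bound on $X_h$, so a tail argument is not available. The paper uses one only in the twisted \Cref{twistedCombinatorialKernelLemma}, where $X_j\ge C_1(V_{\mu+j}+|W^+_{\mu+j}|+|W^-_{\mu+j}|+1)$ is assumed.

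The missing idea is to take absolute values only \emph{after} the primes have been aggregated into the $V_{a,b}$. After Montgomery--Vaughan, insert the harmless weight $\boldsymbol\alpha!\ge1$, i.e.\ bound $\sum_n|a(n)|^2/n\le\sum_{\boldsymbol\alpha}\boldsymbol\alpha!\,|c_{\boldsymbol\alpha}|^2$.

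With this weight your generating function is \emph{exactly} $\exp\bigl(\sum_{a,b}V_{a,b}u_a\bar v_b\bigr)$. There is no correction factor and no need for coefficient bounds on $b_j(p)$. By \Cref{balancedCoefficientExpansionLemma} the quantity equals $\prod_{j\le\mu}(\ell_j!)^2\sum_{\rho\in\mathscr R}\prod_{a,b}V_{a,b}^{\rho_{a,b}}/\rho_{a,b}!$. The truncations enter only as the constraints that the row and column sums indexed by $\mu+h$ are at most $X_h$.

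Bound each term by $\prod_{a,b}|V_{a,b}|^{\rho_{a,b}}/\rho_{a,b}!$. The terms are then non-negative and involve only $V_a\ge0$ and $|V_{a,b}|=O(1)$, so the truncation constraints can simply be dropped. Write $\rho_{ii}=\ell_i-m_i$ for $i\le\mu$ and use $\ell_i!/(\ell_i-m_i)!\le\ell_i^{m_i}$ together with $\ell_i\ll V_i$. The sums then separate into $\prod_{i\le\mu}\ell_i!V_i^{\ell_i}\exp\bigl(\sum_hV_{\mu+h}+O(1)\bigr)$.

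This termwise comparison also replaces your saddle-point step, which as stated is lossy. A max-modulus Cauchy bound on $|u_j|=|v_j|=\sqrt{\ell_j/V_j}$ gives $e^{\ell_j}(V_j/\ell_j)^{\ell_j}\asymp\sqrt{\ell_j}\,V_j^{\ell_j}/\ell_j!$, an extra factor $\sqrt{\ell_j}$. You would avoid this only by integrating the modulus over the torus instead.
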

	
	{The proof is given in
		Appendix~\ref{appendixDirichletPolynomialKernels}.}
	
	\subsection{Moments of $L$-functions twisted by a product of Dirichlet polynomials}
	{For $n_j,m_j\in\mathcal D_j$ ($1\leq j\leq R$), the disjointness
		of the prime supports gives $(n_i m_i,n_j m_j)=1$ whenever $i\ne j$. Hence
		\[
		[n_1\cdots n_R,m_1\cdots m_R]=\prod_{\ell=1}^R[n_\ell,m_\ell],
		\]
		and the reduced local factors $B,B^\chi,\widehat Z^f$ split in an analogous way.
		Using these observations and applying the single-polynomial estimates in
		\Cref{twistedFourthMomentUpperBoundMultiplicative,twistedFourthMomentUpperBoundMultiplicativeDirichlet,twistedSecondMomentUpperBoundMultiplicativeHolomorphicCusp}
		to $\prod_\ell D_\ell$, we obtain the following three conclusions.}
	\begin{lem}\label{zetaFourthMomentProductSplitting}
		Let $D_1(s),\dots,D_R(s)$ be as defined in the beginning of this section. Assume $$\max_{n_1 \in \mathcal{D}_1,\dots,n_R \in \mathcal{D}_R} n_1\cdots n_R \leq T^{1/5}.$$
		Then
		\begin{align*}
			&\int_T^{2T} |\zeta(\tfrac12+it)|^4\left|\prod_{j=1}^R D_j(\tfrac12+it)\right|^2 \, \textup{d}t \\
			\ll \,\, &{T(\log T)^4\prod_{\ell=1}^R \max_{\substack{|z_j|=3^j/\log T, \\ 1\leq j \leq 4}} \left|\sum_{n,m\in \mathcal{D}_\ell} \frac{a_\ell(n)\overline{a_\ell(m)}}{[n,m]}B_{z_1,z_2,z_3,z_4,\frac{n}{(m,n)}}B_{z_3,z_4,z_1,z_2,\frac{m}{(m,n)}}\right|+T^{1-\eta}}
		\end{align*}
		{for some fixed $\eta>0$.}
	\end{lem}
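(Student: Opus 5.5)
Set $D(s)=\prod_{j=1}^R D_j(s)=\sum_n a(n)n^{-s}$. By the discussion before the statement, $|a(n)|\ll_\varepsilon n^\varepsilon$ uniformly in $R$, and $a(n)$ is supported on $n\le T^{1/5}$. So $D$ satisfies the hypotheses on $A(s)$ in \Cref{twistedFourthMomentUpperBoundMultiplicative}, with $\vartheta=1/5<1/4-\varepsilon$ once $\varepsilon$ is small. Since $\Phi\geq0$ and $\Phi=1$ on $[1,2]$, the integral over $[T,2T]$ is at most the smoothed integral. Applying that lemma gives
\[
\ll T(\log T)^4\max_{|z_j|=3^j/\log T}|G(z_1,z_2,z_3,z_4)|+T^{1-\eta},
\]
where $G$ is built from $a(n)$. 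It then remains to show that for each fixed $(z_1,\dots,z_4)$ on the circles, $G$ factors as $\prod_\ell G_\ell$, with $G_\ell$ the sum displayed in the statement. The bound $\max\prod_\ell|G_\ell|\le\prod_\ell\max|G_\ell|$ then finishes the proof.

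For the factorization, write $n=n_1\cdots n_R$ and $m=m_1\cdots m_R$ with $n_\ell,m_\ell\in\mathcal D_\ell$; this decomposition is unique by injectivity of $\Psi$ as in \Cref{splittingDirichletPolynomialsLemma}. Then $a(n)=\prod_\ell a_\ell(n_\ell)$. Disjointness of the prime supports gives $[n,m]=\prod_\ell[n_\ell,m_\ell]$ and $(n,m)=\prod_\ell(n_\ell,m_\ell)$, so $n/(n,m)=\prod_\ell n_\ell/(n_\ell,m_\ell)$, and these factors are pairwise coprime. The function $B_{z_1,z_2,z_3,z_4,k}$ is defined as a product over $p^\nu\Vert k$ of local factors depending only on $p$ and $\nu$. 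It is therefore multiplicative in $k$, so $B_{\cdot,n/(n,m)}=\prod_\ell B_{\cdot,n_\ell/(n_\ell,m_\ell)}$, and likewise for the $m$-factor. Summing over the tuples $(n_\ell,m_\ell)_\ell$ independently then turns $G$ into $\prod_\ell G_\ell(z_1,\dots,z_4)$. This is a finite sum, since each $\mathcal D_\ell$ contributes only integers $\le T^{1/5}$.

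The main point to check is that the individual local factors of $B$ are well defined at the shifts $|z_j|\asymp1/\log T$. The denominators $\sum_j\sigma\sigma p^{-j}=1+O(p^{-1+o(1)})$ are nonzero for all primes when $T$ is large. Only finitely many small primes could be an issue; there one uses that $|z_j|\to0$, so the local factor is close to its value at $0$, which is positive.

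I would also verify that the prime blocks occurring in later applications avoid any convention clash. Beyond that, the argument is purely formal: the analytic input is entirely contained in \Cref{twistedFourthMomentUpperBoundMultiplicative}, and the additive error $T^{1-\eta}$ is carried through unchanged.
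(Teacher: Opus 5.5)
Your proposal is correct and follows the paper's route. You bound $\mathbf 1_{[T,2T]}$ by $\Phi(t/T)$, apply the single-polynomial twisted fourth-moment bound to $\prod_\ell D_\ell$ (length at most $T^{1/5}$, coefficients $\ll n^\varepsilon$ uniformly in $R$), use the disjoint prime supports to split $[n,m]=\prod_\ell[n_\ell,m_\ell]$ and the multiplicative factors $B$ so that $G=\prod_\ell G_\ell$, and finish with $\max\prod_\ell|G_\ell|\le\prod_\ell\max|G_\ell|$. Your extra check that the local factors of $B$ are well defined at the shifts is harmless but unnecessary, since it is already built into the single-polynomial lemma you invoke.
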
  
	
	Mutatis mutandis we also obtain
	
	\begin{lem}\label{dirichletFourthMomentProductSplitting}
		Let $\chi$ be a fixed primitive Dirichlet character.  Let $D_1(s),\dots,D_R(s)$ be as before and assume $$\max_{n_1 \in \mathcal{D}_1,\dots,n_R \in \mathcal{D}_R} n_1\cdots n_R \leq T^{1/5}.$$ Then 
		\begin{align*}
			&\int_T^{2T} |L(\tfrac12+it,\chi)|^4\left|\prod_{j=1}^R D_j(\tfrac12+it)\right|^2 \, \textup{d}t \\
			\ll \,\, &{T(\log T)^4\prod_{\ell=1}^R \max_{\substack{|z_j|=3^j/\log T, \\ 1\leq j \leq 4}} \left|\sum_{n,m\in \mathcal{D}_\ell} \frac{a_\ell(n)\overline{a_\ell(m)}}{[n,m]}B^{\chi}_{z_1,z_2,z_3,z_4,\frac{n}{(m,n)}}B^{\overline{\chi}}_{z_3,z_4,z_1,z_2,\frac{m}{(m,n)}}\right|+T^{1-\eta}}.
		\end{align*}
		{Here $\eta>0$ is as in
			\Cref{twistedFourthMomentUpperBoundMultiplicativeDirichlet}.}
	\end{lem}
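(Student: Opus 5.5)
The statement is the Dirichlet-character analogue of \Cref{zetaFourthMomentProductSplitting}, and I would prove it by the same route. Apply \Cref{twistedFourthMomentUpperBoundMultiplicativeDirichlet} to the single polynomial $A(s)=\prod_{j=1}^R D_j(s)$, and then factor the resulting arithmetic sum $G^\chi$ over the blocks $\mathcal D_\ell$.

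\textbf{Admissibility of $A$.} Write $A(s)=\sum_n a(n)n^{-s}$. By the coefficient bound recorded at the start of this subsection, $|a(n)|\ll_\varepsilon n^\varepsilon$ uniformly in $R$. The length hypothesis gives support in $n\le T^{1/5}$, so $\vartheta=1/5<1/4-\varepsilon$ for small $\varepsilon$, and the gap to $1/4$ is fixed. Since $\Phi\geq0$ and $\Phi=1$ on $[1,2]$, the integral over $[T,2T]$ is at most the smoothed integral. \Cref{twistedFourthMomentUpperBoundMultiplicativeDirichlet} then bounds the left side by
\[
T(\log T)^4\max_{|z_j|=3^j/\log T}|G^\chi(z_1,z_2,z_3,z_4)|+T^{1-\eta}.
\]

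\textbf{Factorization of $G^\chi$.} Write $n=\prod_\ell n_\ell$ and $m=\prod_\ell m_\ell$ with $n_\ell,m_\ell\in\mathcal D_\ell$. This decomposition is unique by the disjointness of prime supports, so $a(n)=\prod_\ell a_\ell(n_\ell)$. Disjointness also gives $[n,m]=\prod_\ell[n_\ell,m_\ell]$ and $(n,m)=\prod_\ell(n_\ell,m_\ell)$. Hence $n/(n,m)=\prod_\ell n_\ell/(n_\ell,m_\ell)$, and these factors are pairwise coprime.

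The factor $B^\chi_{z_1,z_2,z_3,z_4,k}$ is a product over $p^\nu\Vert k$ of local factors depending only on $p$ and $\nu$. It is therefore multiplicative in $k$, so it splits as $\prod_\ell B^\chi_{\dots,n_\ell/(n_\ell,m_\ell)}$, and the same holds for $B^{\overline\chi}$. The double sum over $(n,m)$ thus becomes, for each fixed $(z_1,\dots,z_4)$, a product over $\ell$ of the blockwise sums appearing in the statement. Finally, the maximum of a product over the torus of $z$'s is at most the product of the maxima, which gives the claimed bound.

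\textbf{Expected obstacle.} The bookkeeping above is routine. The only point needing care is that the appeal to \Cref{twistedFourthMomentUpperBoundMultiplicativeDirichlet} is uniform in $R$. That lemma depends only on $\vartheta$, $\varepsilon$ and the implied constant in $a(n)\ll_\varepsilon n^\varepsilon$, and the latter is uniform in $R$ by the $C_\varepsilon^{\omega(n)}n^{\varepsilon/2}$ bound. So $\eta$ is the fixed $\eta(\varepsilon,1/5)$ of that lemma, as claimed.

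One point to check is primes dividing $q$. There $\chi(p)=0$, so the local factors of $B^\chi$ at such $p$ are well defined and the factorization is unaffected. In the applications these primes are in any case excluded from the prime sums.
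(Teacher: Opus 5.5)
Your proposal is correct and follows the paper's argument: apply \Cref{twistedFourthMomentUpperBoundMultiplicativeDirichlet} to $\prod_\ell D_\ell$, use the disjoint prime supports to split $a(n)$, $[n,m]$, $(n,m)$ and the multiplicative factors $B^\chi$, $B^{\overline\chi}$ over the blocks, and bound the maximum of the product by the product of the maxima. Your extra checks are the details the paper leaves implicit under ``mutatis mutandis'': the coefficient bound is uniform in $R$, $\vartheta=1/5$ stays a fixed distance below $1/4$, and primes dividing $q$ cause no trouble.
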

	
	\begin{lem}\label{cuspSecondMomentProductSplitting}
		Let $f$ be a primitive holomorphic cusp form. Let $D_1(s),\dots,D_R(s)$ be as before, but assume in addition that they are supported on integers coprime to $N_f$, and assume $$\max_{n_1 \in \mathcal{D}_1, \dots, n_R \in \mathcal{D}_R} n_1\dots n_R \leq T^{1/10}.$$ Then 
		\begin{align*}
			&\int_T^{2T} |L(\tfrac12+it,f)|^2\left|\prod_{j=1}^R D_j(\tfrac12+it)\right|^2 \, \textup{d}t \\
			\ll\,\, &{T(\log T)\prod_{\ell=1}^R \max_{\substack{|z_j|=2^j/\log T, \\ 1\leq j \leq 2}} \left|\sum_{n,m\in \mathcal{D}_\ell} \frac{a_\ell(n)\overline{a_\ell(m)}}{[n,m]}\widehat Z^f_{z_1,z_2,\frac{n}{(n,m)},\frac{m}{(n,m)}}\right|+T^{1-\eta}}.
		\end{align*}
		{Here one may take any fixed
			$0<\eta<\frac12-\theta-\frac15$, with the usual harmless loss from
			the coefficient bound. Here $\theta$ is an admissible constant towards the Ramanujan--Petersson conjecture for $\textup{GL}(2)$ $L$-functions. In particular Kim--Sarnak's $\theta=7/64$ \cite{KimSarnak} gives $\eta>0$.}
	\end{lem}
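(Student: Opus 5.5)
The plan is to deduce \Cref{cuspSecondMomentProductSplitting} from the single-polynomial estimate \Cref{twistedSecondMomentUpperBoundMultiplicativeHolomorphicCusp}, applied to the product $A(s)=\prod_{\ell=1}^R D_\ell(s)$. After that, the remaining work is to show that the resulting $G^f$ factors over $\ell$ and to control the maximum over the circles.

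\textbf{Step 1: the hypotheses.} Write $A(s)=\sum_n a(n)n^{-s}$. The discussion preceding \Cref{splittingDirichletPolynomialsLemma} gives $a(n)\ll_\varepsilon n^\varepsilon$ uniformly in $R$, and $a(n)=a_1(n_1)\cdots a_R(n_R)$ for the unique factorization $n=n_1\cdots n_R$ with $n_\ell\in\mathcal D_\ell$. The support of $A$ lies in $[1,T^{1/10}]$, so $\vartheta=1/10$ is admissible. Each $D_\ell$ is supported on integers coprime to $N_f$, hence $a(n)=0$ whenever $(n,N_f)>1$. The weight satisfies $\Phi\geq\mathbf 1_{[1,2]}$ and the integrand is non-negative, so
\[
\int_T^{2T}|L(\tfrac12+it,f)|^2|A(\tfrac12+it)|^2dt\leq\int_{\mathbb R}|L(\tfrac12+it,f)|^2|A(\tfrac12+it)|^2\Phi(t/T)\,dt .
\]
\Cref{twistedSecondMomentUpperBoundMultiplicativeHolomorphicCusp} then bounds the left side by $T\log T\max|G^f(z_1,z_2)|+T^{1-\eta}$, where $0<\eta<\tfrac12-\theta-\tfrac15$ is any fixed admissible value.

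\textbf{Step 2: factorization of $G^f$.} Write $n=\prod_\ell n_\ell$ and $m=\prod_\ell m_\ell$ with $n_\ell,m_\ell\in\mathcal D_\ell$. Disjointness of the prime supports gives $[n,m]=\prod_\ell[n_\ell,m_\ell]$ and $(n,m)=\prod_\ell(n_\ell,m_\ell)$. So $n/(n,m)=\prod_\ell n_\ell/(n_\ell,m_\ell)$, and likewise for $m$, with the factors pairwise coprime and the two reduced integers coprime.

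Now look at $Z^f_{s,n,m}$. It is a product over prime powers exactly dividing $n$ and $m$, so it is multiplicative in the pair $(n,m)$ over coprime splittings. The normalizing factor $(nm)^{-(z_1-z_2)/2}$ is completely multiplicative. Hence
\[
\widehat Z^f_{z_1,z_2,\frac{n}{(n,m)},\frac{m}{(n,m)}}=\prod_\ell \widehat Z^f_{z_1,z_2,\frac{n_\ell}{(n_\ell,m_\ell)},\frac{m_\ell}{(n_\ell,m_\ell)}}.
\]
The map $(n_1,\dots,n_R)\mapsto n$ is injective, which is the same argument as in the proof of \Cref{splittingDirichletPolynomialsLemma}. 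So the double sum defining $G^f(z_1,z_2)$ factors as $\prod_\ell G^f_\ell(z_1,z_2)$, where $G^f_\ell$ is the analogous sum over $n,m\in\mathcal D_\ell$.

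\textbf{Step 3: the maximum over the circles.} We have $\max_{z}|\prod_\ell G^f_\ell(z)|\leq\prod_\ell\max_z|G^f_\ell(z)|$. One point needs care here. The circles in \Cref{twistedSecondMomentUpperBoundMultiplicativeHolomorphicCusp} are $|z_1|=2/\log T$ and $|z_2|=4/\log T$, and these coincide with the radii $2^j/\log T$, $j=1,2$, written in \Cref{cuspSecondMomentProductSplitting}, so no adjustment is needed.

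\textbf{Main obstacle.} I expect the delicate point to be the multiplicativity of $Z^f$ at primes where both $p\mid n_\ell$ and $p\mid m_\ell$. This is handled because the reduced pair $n/(n,m)$, $m/(n,m)$ is coprime, so each prime appears in only one of the two products defining $Z^f$. One must also check that the convention $\lambda_f(p^{-1})=0$ causes no issue locally, which it does not, since the formula is applied prime by prime. Finally, the error term $T^{1-\eta}$ is uniform in $R$, because the coefficient bound on $a(n)$ is uniform in $R$.
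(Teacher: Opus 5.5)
Your proposal is correct and follows the paper's argument. You apply \Cref{twistedSecondMomentUpperBoundMultiplicativeHolomorphicCusp} to $A=\prod_\ell D_\ell$ with $\vartheta=1/10$. Disjointness of the prime supports then splits $[n,m]$ and the reduced factor $\widehat Z^f$ over $\ell$, so $G^f=\prod_\ell G^f_\ell$, and you bound the maximum of the product by the product of the maxima. You also supply the routine checks the paper leaves implicit: the coefficient bound uniform in $R$, coprimality to $N_f$, $\Phi\geq\mathbf 1_{[1,2]}$, and the matching radii.
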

	
	{
		\begin{prop}\label{verificationOfTwistedLocalKernels}
			{Fix $c>0$.  Let the contour shifts satisfy
				$|z_j|\ll1/\log T$, let $p\leq T^c$ be unramified, and write
				\[
				\mathbf z=(z_1,z_2,z_3,z_4),\qquad
				\mathbf z^\vee=(z_3,z_4,z_1,z_2),\qquad
				w=z_1-z_2.
				\]
				For $A,B\in\mathbb Z_{\geq0}$, when a quadratic form with denominator
				$[n,m]$ is put in the
				normalization
				\[
				\sum_{n,m}\frac{a(n)\overline{a(m)}}{\sqrt{nm}}K(n,m),
				\]
				the local kernels furnished by the preceding moment formulae are as
				follows.  With $d_+=(A-B)_+$ and $d_-=(B-A)_+$,
				\begin{align}
					K_p^{\zeta,4}(A,B)
					&=p^{-|A-B|/2}B_{\mathbf z,p^{d_+}}
					B_{\mathbf z^\vee,p^{d_-}},
					\label{zetaFourthLocalKernel}\\
					K_p^{\chi,4}(A,B)
					&=p^{-|A-B|/2}B^\chi_{\mathbf z,p^{d_+}}
					B^{\overline\chi}_{\mathbf z^\vee,p^{d_-}},
					\label{characterFourthLocalKernel}\\
					K_p^{f,2}(A,B)
					&=p^{-|A-B|(1+w)/2}
					Z^f_{1+w,p^{d_+},p^{d_-}}.
					\label{cuspSecondLocalKernel}
				\end{align}
				Each kernel equals $1$ when $A=B$, and for $d\geq1$ its values
				$K_p(A+d,A)=\kappa^+_{p,d}$ and
				$K_p(A,A+d)=\kappa^-_{p,d}$ are independent of $A$.  Uniformly on the
				contours there is a fixed $C>0$ such that
				\begin{equation}\label{verifiedKappaBound}
					|\kappa^+_{p,d}|+|\kappa^-_{p,d}|
					\ll(d+1)C^dp^{-d/2}.
				\end{equation}
				For the fourth moments the linear terms are
				\begin{align}
					\kappa^{\zeta,4,+}_{p,1}
					&=p^{-1/2}\{p^{-z_3}+p^{-z_4}
					+O(p^{-1+O(1/\log T)})\},\nonumber\\
					\kappa^{\zeta,4,-}_{p,1}
					&=p^{-1/2}\{p^{-z_1}+p^{-z_2}
					+O(p^{-1+O(1/\log T)})\},
					\label{zetaLinearLocalTerms}\\
					\kappa^{\chi,4,+}_{p,1}
					&=\overline\chi(p)p^{-1/2}\{p^{-z_3}+p^{-z_4}
					+O(p^{-1+O(1/\log T)})\},\nonumber\\
					\kappa^{\chi,4,-}_{p,1}
					&=\chi(p)p^{-1/2}\{p^{-z_1}+p^{-z_2}
					+O(p^{-1+O(1/\log T)})\}.
					\label{characterLinearLocalTerms}
				\end{align}
				Finally, with the convention $\lambda_f(p^{-1})=0$, the cusp-form
				parameters are exactly
				\begin{align}
					\kappa^{f,2,+}_{p,d}
					&=p^{-d(1+w)/2}
					\frac{\overline{\lambda_f(p^d)}-
						\overline{\omega_f(p)}p^{-1-w}
						\overline{\lambda_f(p^{d-2})}}{1+p^{-1-w}},
					\label{cuspPlusKappa}\\
					\kappa^{f,2,-}_{p,d}
					&=p^{-d(1+w)/2}
					\frac{\lambda_f(p^d)-\omega_f(p)p^{-1-w}
						\lambda_f(p^{d-2})}{1+p^{-1-w}}.
					\label{cuspMinusKappa}
			\end{align}}
		\end{prop}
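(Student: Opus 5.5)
The plan is a direct local computation at a single unramified prime $p$. Everything reduces to rewriting each quadratic form with denominator $[n,m]$ in the $\sqrt{nm}$ normalization, and then evaluating the local Euler factors explicitly.

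\emph{Identifying the kernels.} For $n=p^A$ and $m=p^B$ one has
\[
[n,m]=p^{\max(A,B)},\qquad \sqrt{nm}/[n,m]=p^{-|A-B|/2},
\]
and
\[
n/(n,m)=p^{d_+},\qquad m/(n,m)=p^{d_-}.
\]
Comparing with the sums $G$, $G^\chi$, $G^f$ of \Cref{twistedFourthMomentUpperBoundMultiplicative,twistedFourthMomentUpperBoundMultiplicativeDirichlet,twistedSecondMomentUpperBoundMultiplicativeHolomorphicCusp} gives \eqref{zetaFourthLocalKernel} and \eqref{characterFourthLocalKernel} at once. For \eqref{cuspSecondLocalKernel}, the normalization \eqref{normalizedCuspLocalFactor} contributes the extra factor $(nm)^{-w/2}$. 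Since $Z^f$ is evaluated at the reduced pair, this factor should be read at the level of the reduced pair, giving $p^{-|A-B|w/2}$ and hence the stated power $p^{-|A-B|(1+w)/2}$. I will check this convention against the definition of $G^f$, which uses the reduced arguments.

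\emph{Structural properties.} When $A=B$, both reduced integers equal $1$. Each $B$-factor is then an empty product, and so is $Z^f_{s,1,1}$, so every kernel equals $1$. Moreover, each kernel depends only on $(A-B)$, through $d_\pm$. This gives the independence of $\kappa^\pm_{p,d}$ from $A$.

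\emph{The bound \eqref{verifiedKappaBound}.} For the cusp kernels I will use Deligne's bound $|\lambda_f(p^d)|\le d+1$, valid for holomorphic forms, including weight one via Deligne--Serre. I will also use $|p^{-w}|\ll 1$ for $p\le T^c$ and $|w|\ll1/\log T$, together with $|1+p^{-1-w}|\ge 1/2$.

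For the fourth-moment kernels I will bound the local ratio
\[
B_{\mathbf z,p^\nu}=\frac{\sum_j\sigma_{z_1,z_2}(p^j)\sigma_{z_3,z_4}(p^{j+\nu})p^{-j}}{\sum_j \sigma_{z_1,z_2}(p^j)\sigma_{z_3,z_4}(p^j)p^{-j}}.
\]
This uses $|\sigma_{z_1,z_2}(p^j)|\le (j+1)|p^{-z}|^{j}$ with $|p^{-z}|\le e^{O(c)}=:C_0$. I also need that the denominator is bounded away from $0$. This holds because it is $1+O(p^{-1}C_0^2)$ for large $p$; the small primes are finitely many and are handled by uniform continuity in $\mathbf z$ near $\mathbf 0$. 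The numerator is then $\ll (\nu+1)C_0^\nu$, and multiplying by $p^{-d/2}$ gives the bound with $C=C_0$, up to adjusting $d+1$ by polynomial factors absorbed into $C^d$.

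\emph{Linear terms.} Take $\nu=1$. In the numerator, the $j=0$ term is $\sigma_{z_3,z_4}(p)=p^{-z_3}+p^{-z_4}$, and the $j\ge1$ terms are $O(p^{-1+O(1/\log T)})$. The denominator is $1+O(p^{-1+\cdots})$. This gives \eqref{zetaLinearLocalTerms}; the $-$ case swaps $\mathbf z$ and $\mathbf z^\vee$. In the character case, the factors $\chi(p^j)\overline\chi(p^{j+1})=\overline\chi(p)$ give \eqref{characterLinearLocalTerms}. The cusp parameters \eqref{cuspPlusKappa}--\eqref{cuspMinusKappa} are just the definition of $Z^f$ at a prime power, with $s=1+w$.

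\emph{Main obstacle.} The only delicate point is uniformity: keeping the denominators of $B$ bounded below uniformly over all $p\le T^c$ on the contours. I expect the fix to be to note that $|z_j|\log p\ll c$, so every $p^{-z}$ stays within a fixed compact set. A crude bound then suffices for large $p$, and finitely many primes remain. If non-vanishing at small primes is an issue, I will absorb them into the implied constant using continuity at $\mathbf z=\mathbf 0$, where the denominators equal $(1-p^{-1})^{-4}(1-p^{-2})\ne0$ in the zeta case, and the analogous nonzero quantity in the character case.
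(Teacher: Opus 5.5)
Your proposal is correct and follows the paper's own route. You read off the kernels from $\sqrt{nm}/[n,m]=p^{-|A-B|/2}$ and the definitions of $B$, $B^\chi$ and $\widehat Z^f$ (evaluated at the reduced pair, exactly as $G^f$ prescribes), then expand the geometric series in $B$ and use Deligne's bound together with $|p^{O(1/\log T)}|\leq e^{O(c)}$ to get \eqref{verifiedKappaBound}; your extra care about uniformity at the finitely many small primes, where in fact $|p^{-z}|=1+o(1)$, fills in a point the paper passes over silently.
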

		
		{The proof is given in
			Appendix~\ref{appendixDirichletPolynomialKernels}.}
	}
	
	{The following estimate combines the prime blocks with a
		bound independent of their number.}

	{
		\begin{lem}[Uniform multiblock kernel estimate]\label{multiBlockCombinatorialLemma}
			Let $\mathcal P=\bigsqcup_{h\leq H}\mathcal P_h$ be disjoint prime
			blocks, with at most $r_0$ labels $\gamma=(a,h)$ on each block, where
			$1\leq a\leq r_0$.  Fix $r_0$ and $0\leq\theta<1/4$, and write
			$a(\gamma)=a$ and
			\[
			P_\gamma(s)=\sum_{p\in\mathcal P_h}
			\frac{b_\gamma(p)}{p^s},
			\qquad b_\gamma(p)\ll p^\theta.
			\]
			Partition the labels into fixed powers $\mathfrak F$ and truncations
			$\mathfrak T$, combining repeated powers of the same $P_\gamma$.
			Let $\ell_\gamma$ be positive integers and $X_\gamma$ nonnegative
			integers, and put
			\begin{equation}\label{multiblockPolynomialDefinition}
				\mathcal A(s)=
				\prod_{\gamma\in\mathfrak F}P_\gamma(s)^{\ell_\gamma}
				\prod_{\gamma\in\mathfrak T}E_{X_\gamma}(P_\gamma(s))
				=\sum_n\frac{a(n)}{n^s}.
			\end{equation}
			Define
			\[
			V_{\gamma,\delta}
			=\sum_{p\in\mathcal P_h}
			\frac{b_\gamma(p)\overline{b_\delta(p)}}p,
			\qquad V_\gamma=V_{\gamma,\gamma};
			\]
			with $V_{\gamma,\delta}=0$ on different blocks.  Assume
			$\ell_\gamma\ll V_\gamma$ and
			\begin{equation}\label{multiblockAggregateCovariance}
				\sum_{a\neq b}
				\left|
				\sum_{\substack{h:(a,h),(b,h)\in\mathfrak T}}
				V_{(a,h),(b,h)}
				\right|\ll1,
				\qquad
				\sum_{\substack{\gamma\neq\delta,\ a(\gamma)\neq a(\delta)\\
						\gamma\in\mathfrak F\ {\text{ or }}\ \delta\in\mathfrak F}}
				|V_{\gamma,\delta}|\ll1.
			\end{equation}
			
			Let $K=\prod_pK_p$ satisfy the kernel hypotheses of
			\Cref{twistedCombinatorialKernelLemma}, with $W_\gamma^\pm$ defined
			there.  Allow one distinguished type $a_0$; for the twisted bound,
			require at most one label $\gamma_0\in\mathfrak F$ of that type.
			For the remaining types assume
			\begin{equation}\label{multiblockAggregateLinear}
				\sum_{a\neq a_0}
				\left(
				\left|\sum_{(a,h)\in\mathfrak T}W_{(a,h)}^+\right|
				+
				\left|\sum_{(a,h)\in\mathfrak T}W_{(a,h)}^-\right|
				\right)
				+
				\sum_{\substack{\gamma\in\mathfrak F\\a(\gamma)\neq a_0}}
				(|W_\gamma^+|+|W_\gamma^-|)
				\ll1.
			\end{equation}
			For $\gamma\in\mathfrak T$, set
			\[
			S_\gamma=V_\gamma+|W_\gamma^+|+|W_\gamma^-|+1
			+\sum_{\delta\neq\gamma}
			\bigl(|V_{\gamma,\delta}|+|V_{\delta,\gamma}|\bigr),
			\]
			and suppose
			\begin{equation}\label{multiblockCutoffConditions}
				X_\gamma\geq C_1S_\gamma,
				\qquad
				\sum_{\gamma\in\mathfrak T}e^{-c_1X_\gamma}\ll1,
			\end{equation}
			where $c_1>0$ is fixed and $C_1$ is sufficiently large in terms of
			$c_1$, $r_0$, and the preceding uniform bounds.
			
			Then
			\begin{equation}\label{multiblockCoefficientBound}
				\sum_n\frac{|a(n)|^2}{n}
				\ll
				\left(\prod_{\gamma\in\mathfrak F}
				\ell_\gamma!V_\gamma^{\ell_\gamma}\right)
				\exp\left(\sum_{\gamma\in\mathfrak T}V_\gamma\right).
			\end{equation}
			For $V>0$, write
			\begin{equation}\label{multiblockDistinguishedHBound}
				\begin{aligned}
					\mathcal H_\ell(V;W^+,W^-)
					&=(\ell!)^2\sum_{j=0}^{\ell}
					\frac{V^{\ell-j}|W^+W^-|^j}{(\ell-j)!(j!)^2}\\
					&\leq \ell!V^\ell
					\exp\left(2\sqrt{\ell|W^+W^-|/V}\right).
				\end{aligned}
			\end{equation}
			Set $\Delta_0=1$ if $\gamma_0$ is absent.  Otherwise assume
			$|W_{\gamma_0}^+|\asymp|W_{\gamma_0}^-|\asymp V_{\gamma_0}$ and put
			\[
			\Delta_0=
			\frac{\mathcal H_{\ell_{\gamma_0}}
				(V_{\gamma_0};W_{\gamma_0}^+,W_{\gamma_0}^-)}
			{\ell_{\gamma_0}!V_{\gamma_0}^{\ell_{\gamma_0}}}.
			\]
			With $W_{\mathfrak T,0}^\pm
			=\sum_{(a_0,h)\in\mathfrak T}W_{(a_0,h)}^\pm$, we also have
			\begin{equation}\label{multiblockTwistedConclusion}
				\left|\sum_{n,m}
				\frac{a(n)\overline{a(m)}}{\sqrt{nm}}K(n,m)\right|
				\ll \Delta_0
				\left(\prod_{\gamma\in\mathfrak F}
				\ell_\gamma!V_\gamma^{\ell_\gamma}\right)
				\exp\left(
				\sum_{\gamma\in\mathfrak T}V_\gamma
				+\Re(W_{\mathfrak T,0}^++W_{\mathfrak T,0}^-)
				\right).
			\end{equation}
			All implied constants are uniform in the labels and independent of $H$.
		\end{lem}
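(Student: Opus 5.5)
The plan is to exploit multiplicativity. All prime blocks are disjoint and every label lives on a single block, so $\mathcal A$ factors as $\prod_h \mathcal A_h$ with $\mathcal A_h$ supported on $\mathcal P_h$-smooth integers. The kernel $K=\prod_p K_p$ is multiplicative, and by the hypotheses of \Cref{twistedCombinatorialKernelLemma} each $K_p(A,B)$ depends only on $A-B$, equals $1$ on the diagonal, and satisfies a bound like \eqref{verifiedKappaBound}. Hence the quadratic form in \eqref{multiblockTwistedConclusion} factors as a product over $h$ of single-block quadratic forms. Taking $K\equiv\mathbf 1_{n=m}$, the untwisted sum $\sum_n|a(n)|^2/n$ factors the same way. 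It therefore suffices to bound each block factor by $\exp(\text{block main term}+\text{block error})$ and to show that the errors sum to $O(1)$ uniformly in $H$. This is exactly where the aggregate hypotheses \eqref{multiblockAggregateCovariance}, \eqref{multiblockAggregateLinear} and the summability in \eqref{multiblockCutoffConditions} enter.

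Within one block I first remove the truncations. I would write $E_{X_\gamma}(P_\gamma)=\exp(P_\gamma)-\text{tail}$ and expand formally, treating $\exp(P_\gamma)$ as the Dirichlet series $\prod_p\sum_k b_\gamma(p)^k/(k!p^{ks})$ restricted to squarefree-type support in the usual Soundararajan way. Concretely, the coefficients of $\prod E_{X_\gamma}(P_\gamma)$ agree with those of the untruncated product on integers with $\Omega(n)\le\min X_\gamma$. The discrepancy is controlled by Rankin's trick: insert a factor $e^{\Omega(n)-X_\gamma}$ (or $2^{\Omega(n)-X_\gamma}$). Because $X_\gamma\ge C_1S_\gamma$ with $C_1$ large, the tail contributes $\ll e^{-c_1X_\gamma}\exp(O(S_\gamma))$ relative to the main term. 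Here $S_\gamma$ controls every quadratic and linear quantity that can appear when the weight $e^{\Omega}$ inflates the variances. This gives a block factor $1+O(e^{-c_1X_\gamma})$, whose product over $\gamma\in\mathfrak T$ is $O(1)$ by \eqref{multiblockCutoffConditions}.

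For the untruncated product I compute the local Euler factor at each $p\in\mathcal P_h$. The fixed powers $P_\gamma^{\ell_\gamma}$ are handled by the multinomial expansion as in \Cref{highPowersAndSomeSquaresUntwistedLemma}. The diagonal pairing of primes gives $\ell_\gamma!V_\gamma^{\ell_\gamma}$, and off-diagonal pairings between distinct labels give factors of $V_{\gamma,\delta}$. The truncation labels give $\exp(\sum_{\gamma,\delta}V_{\gamma,\delta})$ over labels on the same block. The off-diagonal covariances exponentiate to $\exp(O(1))$ by \eqref{multiblockAggregateCovariance} after summing over $h$, and the prime-power (non-squarefree) corrections are $\sum_p O(p^{4\theta-2})\ll1$ since $\theta<1/4$. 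For the twisted form, the linear kernel terms $\kappa^\pm_{p,1}$ couple $p\mid n$, $p\nmid m$. This produces $\exp(W^+_\gamma+W^-_\gamma)$ per truncated label, and taking absolute values yields the $\Re$ in the exponent. The non-distinguished types contribute only $\exp(O(1))$ by \eqref{multiblockAggregateLinear}, while the higher kernel terms $d\ge2$ give $\sum_p p^{-1+2\theta}\cdot p^{-1/2}$-type convergent errors.

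The main obstacle is the distinguished fixed power $P_{\gamma_0}^{\ell_{\gamma_0}}$ in the twisted sum. There the linear terms $W^\pm_{\gamma_0}\asymp V_{\gamma_0}$ are not small and cannot be exponentiated. I would expand $\sum_{n,m}$ for $n,m$ with $\Omega=\ell_{\gamma_0}$ according to the number $j$ of primes of $n$ matched diagonally with $m$ versus absorbed by linear kernel terms, on each side separately. The combinatorics count $\binom{\ell}{j}^2 (\ell-j)!\,(j!)^{0}$-type arrangements, which gives exactly $(\ell!)^2\sum_j V^{\ell-j}|W^+W^-|^j/((\ell-j)!(j!)^2)$, that is $\mathcal H_\ell$. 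The estimate $\mathcal H_\ell\le \ell!V^\ell\exp(2\sqrt{\ell|W^+W^-|/V})$ follows from $\binom{\ell}{j}\le\ell^j/j!$ and $\sum x^j/(j!)^2\le e^{2\sqrt x}$. Cross terms between $\gamma_0$ and the truncated labels of type $a_0$ must be shown to factor out, up to $\exp(O(1))$, using \eqref{multiblockAggregateCovariance}. I expect the bookkeeping here, ensuring that uniformity in $H$ survives the mixed diagonal/linear expansion, to be the delicate point.
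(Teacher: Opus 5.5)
Your factorization of the quadratic form over blocks is correct; it is the block-by-block form of the paper's exact completion $\prod_p\mathfrak F_p=\mathfrak H\exp(\cdots)$. The leading local configurations also do produce $\mathcal H_\ell$.

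The gap is in restoring the truncations. The weight $e^{\Omega(n)-X_\gamma}$ is a function of the integer $n$, not of the label $\gamma$. To use it as a majorant of $\mathbf 1_{\Omega(n)>X_\gamma}$ you must take absolute values of the coefficients and of the complex kernel. It also inflates every label sharing the block with $\gamma$, not only $\gamma$. The tail is then measured against the absolute-value version of the block main term. The relative loss therefore contains $V_{\gamma'}$, $|V_{\gamma',\delta'}|-\Re V_{\gamma',\delta'}$ and $|W^+_{\gamma'}|+|W^-_{\gamma'}|-\Re(W^+_{\gamma'}+W^-_{\gamma'})$ for \emph{all} labels $\gamma',\delta'$ on the block. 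It also contains $e^{\ell_\delta}$ for fixed powers on the block, unless you renormalize the weight. None of these belongs to $S_\gamma$ when $\gamma'\neq\gamma$.

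For example, take $\gamma=(b,h)$ with $b\neq a_0$ and $(a_0,h)\in\mathfrak T$. The loss $\exp(|W^+_{(a_0,h)}|+|W^-_{(a_0,h)}|-\Re(W^+_{(a_0,h)}+W^-_{(a_0,h)}))$ is then not controlled by $X_\gamma\geq C_1S_\gamma$. In the upper-bound application $\alpha_i\le0$, so $\Re(W^++W^-)=\tfrac{u\alpha_i}{2}V$ is negative of size $\asymp P_h$, and this loss is exponentially large in $P_h$. So your claim that $S_\gamma$ controls everything the weight inflates is false. As written, the argument proves the lemma only if each $X_\gamma$ dominates the $S$-quantities of all labels on its block. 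That holds in the application, where $X_\gamma=K_j$ for every label on block $j$, but it is not \eqref{multiblockCutoffConditions}.

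The missing idea is to localize the truncation in the label analytically, not arithmetically in $n$.
\begin{itemize}
\item Give each label its own variables $z_\gamma,w_\gamma$ in the local factors.
\item Use the exact factorization $\prod_p\mathfrak F_p=\mathfrak H\exp\bigl(\sum_{\gamma,\delta}V_{\gamma,\delta}z_\gamma w_\delta+\sum_\gamma W^+_\gamma z_\gamma+\sum_\gamma W^-_\gamma w_\gamma\bigr)$, with $\|\mathfrak H\|_R\ll1$ uniformly in $H$.
\item Bound each over-cutoff contribution $\mathcal Q_E$ by Cauchy's formula in the selected variables only, on circles of fixed radius $R$, leaving every unselected truncated variable equal to $1$.
\end{itemize}
Moving $z_\gamma$ or $w_\gamma$ changes the exponent by at most $C_RS_\gamma$, which is exactly the quantity in the hypothesis. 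The signed aggregates of all other labels are untouched, and inclusion--exclusion gives $\prod_{\gamma\in\mathfrak T}(1+e^{-c_1X_\gamma})^2\ll1$.

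Separately, the cross terms you single out as delicate, between $\gamma_0$ and truncated labels of type $a_0$, vanish identically. A block carries at most one label of each type, and $V_{\gamma,\delta}=0$ across blocks. What does need an argument for $\gamma_0$ is that the remainder $\mathfrak H$ and the bounded cross terms shift its $z$- and $w$-degrees by unequal amounts $a\neq b$. The paper controls this by $C_{\ell-a,\ell-b}\le(\ell/|W^+|)^a(\ell/|W^-|)^bC_{\ell,\ell}$, where $C_{m,n}=[z^mw^n]e^{Vzw+|W^+|z+|W^-|w}$. This is where $|W^\pm_{\gamma_0}|\asymp V_{\gamma_0}$ and $\ell_{\gamma_0}\ll V_{\gamma_0}$ are used, and your sketch never invokes them.
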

		
		{The proof is given in
			Appendix~\ref{appendixDirichletPolynomialKernels}.}
	}
	
	{
		\section{Moment computations}\label{momentComputationsSection}

		We prove the estimates from Section~\ref{holderReductionsSection}.
		\subsection{Mertens estimates}
		
		\Cref{rankinSelbergPrimeSumsProposition} gives
		\begin{equation}\label{primeBlockVariances}
			P_j\asymp_r \log_jT-\log_{j+1}T,
			\qquad
			\sum_{j=2}^{\mathscr J}P_j\asymp_{\boldsymbol L,r}\log_2T,
		\end{equation}
		and, separately for every $1\leq a\leq r$,
		\begin{equation}\label{individualVarianceSum}
			\begin{aligned}
				\sum_{j=2}^{\mathscr J}\sum_{T_{j-1}<p\leq T_j}
				\frac{|\lambda_a(p)|^2}{p}
				&=\log_2T-2\log x_{\mathscr J}
				+O_{\boldsymbol L,r,\widehat Y}(1)\\
				&=\log_2T+O_{\boldsymbol L,r,\widehat Y,\Lambda}(1).
			\end{aligned}
		\end{equation}
		The second equality uses \eqref{terminalScaleBounds}. For $a\ne b$,
		\begin{equation}\label{crossBlockCovariances}
			\sum_{p\leq x}\frac{\lambda_a(p)\overline{\lambda_b(p)}}p=O_{\boldsymbol L}(1).
		\end{equation}
		For any fixed $\varepsilon_*>0$, choosing $\Lambda$ sufficiently large gives
		\begin{equation}\label{primeBlockHierarchy}
			\sum_{\ell>j}P_\ell\ll_{\boldsymbol L,r}\log_{j+1}T
			\leq\varepsilon_*P_j\qquad(2\leq j<\mathscr J).
		\end{equation}
		Indeed, $x_j=e^{x_{j+1}}$ and $P_j\asymp x_j-x_{j+1}$.
		Fix $\varepsilon_*$ small enough for the estimates below.
		\subsection{The factorwise exponential approximation}
		
		\begin{proof}[Proof of \Cref{factorwiseExponentialLemma}]
			Here $\alpha$ is real.  For
			$z=\alpha\mathcal P_{a,j}(s)/2$ we have, by
			\eqref{YDominatesParameters} and \eqref{factorwiseCutoffs},
			$|z|\leq C_{\rm cut}|\alpha|P_j/2
			\leq C_{\rm cut}\widehat YP_j/2\leq K_j/20$.  Taylor's formula and
			$m!\geq(m/e)^m$ give
			\[
			|e^z-E_{K_j}(z)|
			\leq e^{|z|}\left(\frac{e|z|}{K_j+1}\right)^{K_j+1}
			\ll e^{\Re z-K_j/4}.
			\]
			This estimate is uniform for $\alpha\in\mathscr A$, since
			\eqref{YDominatesParameters} gives $|z|\leq K_j/20$ for every
			such $\alpha$, and the remainder bound has an absolute implied constant.
			
			Squaring gives \eqref{factorwiseExponentialApproximation}; applying it
			with $\alpha$ and $c\alpha$ gives \eqref{moveRealPowerInside}.
		\end{proof}
		
		We next check the lengths of the prime-block polynomials used in
		the moment estimates. By \eqref{rankinSelbergQuantitativeMertens},
		\[
		P_2=x_2-2x_3+O_{\boldsymbol L,\widehat Y}(1),\qquad
		P_j=1+2(x_j-x_{j+1})+o(1)\quad(3\leq j\leq\mathscr J),
		\]
		with the latter error uniform in $j$. Hence $P_j\leq3x_j$ for
		sufficiently large $T$. Also,
		$K_j\leq11C_{\rm cut}\widehat YP_j$ and
		$x_{j-1}=e^{x_j}\geq2x_j$, so
		\[
		\sum_{j=2}^{\mathscr J}\frac{K_j\log T_j}{\log T}
		=\sum_{j=2}^{\mathscr J}\frac{K_j}{\widehat Yx_j^2}
		\leq33C_{\rm cut}\sum_{j=2}^{\mathscr J}\frac1{x_j}
		\leq\frac{66C_{\rm cut}}{x_{\mathscr J}}.
		\]
		Each factor $\mathcal N_{a,j}$ has length at most $T_j^{K_j}$,
		and the corresponding factor of $\mathcal Q_{\boldsymbol\nu}$ has length at most
		$T_j^{\ell_j}\leq T_j^{K_j}$. On each block the total degree in
		$\mathcal N_{\boldsymbol\nu}\mathcal Q_{\boldsymbol\nu}$ is at most
		$rK_j$. For the selected exceptional polynomials it is at most
		$r(m_{\rm res}+1)K_j$, and for the polynomial factor in the
		Heath--Brown argument it is at most $rvK_j$, where $v=2n$.
		By \eqref{oneOverNParameterDominance}, these are all at most
		$r\widehat Y^2K_j$; for the upper-bound reduction, $\widehat Y\geq1$
		suffices. Consequently the lengths are at most
		\[
		\prod_{j=2}^{\mathscr J}T_j^{r\widehat Y^2K_j}
		\leq T^{66rC_{\rm cut}\widehat Y^2/x_{\mathscr J}}
		\leq T^{66rC_{\rm cut}\widehat Y^2/\Lambda}
		\leq T^{66/10^7}<T^{1/1000}.
		\]
		Thus we may take $\varepsilon_0=1/1000$, which meets both the
		$T^{1/5}$ and $T^{1/10}$ twisted-moment restrictions.
		We also check the coefficients uniformly in the cutoffs. For a factor
		of $\mathcal Q_{\boldsymbol\nu}$ on block $j$, the coefficient at
		$n=\prod_p p^{\mathfrak e_p}$, with $\Omega(n)=\ell_j$, is
		\[
		\frac{\ell_j!}{(C_{\rm cut}P_j)^{\ell_j}}
		\prod_p\frac{\lambda_a(p)^{\mathfrak e_p}}{\mathfrak e_p!}.
		\]
		The bound $|\lambda_a(p)|\leq2$ is elementary in degree one and follows
		from Deligne \cite{Deligne} (Deligne--Serre \cite{DeligneSerre} in weight one) for the holomorphic factors.
		Since $\ell_j!\leq(C_{\rm cut}P_j)^{\ell_j}$, the absolute coefficient
		is at most $\prod_p2^{\mathfrak e_p}/\mathfrak e_p!$.
		For $\mathcal N_{a,j}(s;\alpha)$ the corresponding bound is
		$\prod_p|\alpha|^{\mathfrak e_p}/\mathfrak e_p!$.
		Each block contains a fixed number of factors, counting the fixed integer
		powers as repeated factors. The multinomial theorem therefore gives
		\[
		|a_j(n)|\leq\prod_{p^{\mathfrak e}\Vert n}
		\frac{M^{\mathfrak e}}{\mathfrak e!}
		\leq\prod_{p^{\mathfrak e}\Vert n}
		\binom{M+\mathfrak e-1}{\mathfrak e}=d_M(n),
		\]
		where $M$ is a fixed positive integer depending only on the reduction
		parameters and $d_M$ is the $M$-fold divisor function. Block constant
		coefficients are $0$ or $1$. Since the prime supports are disjoint,
		the combined coefficient at $n=\prod_j n_j$ is bounded by
		\[
		\prod_j d_M(n_j)=d_M(n)\ll_{M,\varepsilon}n^\varepsilon,
		\]
		with the same $M$ for every block, independently of $K_j$, $\ell_j$,
		and $\mathscr J$.
		\subsection{The mean-value estimates}
		
		\begin{proof}[Proof of \Cref{unifiedMeanValuesProposition}]
			Insert the weight $\Phi(t/T)\geq\mathbf1_{[T,2T]}(t)$ in the twisted
			estimates.  Fix the twisted index $i$ and put $u=4/d_i$.
			In \Cref{multiBlockCombinatorialLemma}, use the truncated factors
			with $2\leq j<\nu_a$, namely
			\[
			E_{K_j}\left(\sum_{T_{j-1}<p\leq T_j}
			\frac{b_{a,j}(p)}{p^s}\right),
			\qquad b_{a,j}(p)=\frac{\alpha_a}{2}\lambda_a(p).
			\]
			For $\nu_a\leq\mathscr J$, use the fixed power
			$\mathcal P_{a,\nu_a}^{\ell_{\nu_a}}$; its scalar factor
			$(C_{\rm cut}P_{\nu_a})^{-\ell_{\nu_a}}$ is kept outside the combinatorial sum.
			The corresponding diagonal sums are
			\begin{align}
				V_{a,j}^{(\alpha)}
				&=\frac{\alpha_a^2}{4}
				\sum_{T_{j-1}<p\leq T_j}\frac{|\lambda_a(p)|^2}{p}
				=\frac{\alpha_a^2}{4}P_j+O_{\boldsymbol L,\boldsymbol\alpha}(1),
				\label{goodBlockVarianceApplication}\\
				V_{a,j}^{(\mathrm{bad})}
				&=\sum_{T_{j-1}<p\leq T_j}
				\frac{|\lambda_a(p)|^2}{p}=P_j+O_{\boldsymbol L}(1).
				\label{badBlockVarianceApplication}
			\end{align}
			For different blocks, $V_{\gamma,\delta}=0$. For distinct types,
			sum $V_{(a,j),(b,j)}$ over their common truncated prefix and
			use \eqref{crossBlockCovariances}.  Terms involving a fixed power satisfy
			\[
			\sum_j|V_{(a,j),(b,j)}|
			\ll_{\mathscr A}\sum_j|R_{ab,j}|\ll1\qquad(a\ne b)
			\]
			by \eqref{rankinSelbergBlockUniformity}.  This verifies
			\eqref{multiblockAggregateCovariance}.
			For each nondistinguished type $a\ne i$, the local kernel formulae and
			\eqref{crossBlockCovariances} give
			\[
			\sum_{2\leq j<\nu_a}W_{(a,j)}^\pm=O(1).
			\]
			The shifts $|z|\ll1/\log T$ change the complete prefix sum by $O(1)$, since
			\[
			\sum_{p\leq T_{\mathscr J}}\frac{|p^{-z}-1|}{p}
			\ll\frac1{\log T}\sum_{p\leq T_{\mathscr J}}\frac{\log p}{p}
			\ll\frac{\log T_{\mathscr J}}{\log T}\ll1.
			\]
			
			The linear sums in the fixed-power rows satisfy
			\[
			\sum_{\substack{\gamma\in\mathfrak F\\a(\gamma)\neq i}}
			(|W_\gamma^+|+|W_\gamma^-|)\ll1,
			\]
			by the local kernel formulae, cross Rankin--Selberg orthogonality, and
			the fact that there are at most $r$ such rows. Together these bounds
			verify \eqref{multiblockAggregateLinear}. If
			$\nu_i\leq\mathscr J$, the unique fixed-power row of distinguished type
			satisfies
			\[
			W_{i,\nu_i}^{\pm}
			=\frac{u}{2}V_{i,\nu_i}^{(\mathrm{bad})}+O(1),
			\qquad
			|W_{i,\nu_i}^{\pm}|\asymp V_{i,\nu_i}^{(\mathrm{bad})};
			\]
			if $\nu_i=\mathscr J+1$, there is no fixed-power row of type $i$.
			Thus the two alternatives in the twisted multiblock conclusion apply
			exactly.
			
			The same bounds give
			\begin{equation}\label{kernelCutoffScale}
				C_1S_\gamma\ll_{\boldsymbol L,r,\mathscr A,C_{\rm cut}}P_j
				\qquad(\gamma=(a,j)).
			\end{equation}
			Choose the marking radius and $C_1$ from the fixed data of
			\Cref{multiBlockCombinatorialLemma}, then choose $\widehat Y$ so that
			$K_j\geq C_1S_\gamma$.  The implied constant is independent of
			the block endpoints $T_j$ and truncation orders $K_j$.  Moreover,
			\[
			\sum_{(a,j)\in\mathfrak T}e^{-cK_j}
			\leq r\sum_j e^{-cK_j}\ll e^{-c'x_{\mathscr J}},
			\]
			since $K_j\gg\widehat Yx_j$ and $x_{j-1}=e^{x_j}$.
			Thus \eqref{multiblockCutoffConditions} holds.
			We first apply the coefficient estimate
			\eqref{multiblockCoefficientBound} in
			\Cref{multiBlockCombinatorialLemma}, followed by the
			Montgomery--Vaughan mean-value theorem.  Equations
			\eqref{goodBlockVarianceApplication}--
			\eqref{badBlockVarianceApplication} give
			\begin{align}
				&\int_T^{2T}
				|\mathcal N_{\boldsymbol\nu}(\tfrac12+it;\boldsymbol\alpha)
				\mathcal Q_{\boldsymbol\nu}(\tfrac12+it)|^2dt\nonumber\\
				&\quad\ll T
				\exp\left(\frac14\sum_{a=1}^r\alpha_a^2
				\sum_{2\leq j<\nu_a}
				\sum_{T_{j-1}<p\leq T_j}
				\frac{|\lambda_a(p)|^2}{p}+O(1)\right)
				\prod_{\nu_a\leq\mathscr J}
				\frac{\ell_{\nu_a}!\,(2P_{\nu_a})^{\ell_{\nu_a}}}
				{(C_{\rm cut}P_{\nu_a})^{2\ell_{\nu_a}}}.
				\label{untwistedCombinatorialApplication}
			\end{align}
			Here $2P_j$ bounds the sum in
			\eqref{badBlockVarianceApplication}.  Since $\ell_j\leq C_{\rm cut}P_j$, Stirling's
			formula gives, uniformly in $j$,
			\begin{align}
				\frac{\ell_j!\,(2P_j)^{\ell_j}}{(C_{\rm cut}P_j)^{2\ell_j}}
				&\ll \sqrt{\ell_j}
				\left(\frac{2\ell_j}{eC_{\rm cut}^{\,2}P_j}\right)^{\ell_j}
				\ll e^{-c_0\ell_j}
				\ll e^{-c_1P_j}
				\ll e^{-c_2K_j}.
				\label{badBlockFactorialSaving}
			\end{align}
			Here $c_2$ depends on the fixed $C_{\rm cut},\widehat Y$.
			Combining
			\eqref{individualVarianceSum},
			\eqref{untwistedCombinatorialApplication}, and
			\eqref{badBlockFactorialSaving} proves
			\eqref{unifiedUntwistedMean}.
			
			For the twisted estimate, apply the appropriate single-polynomial
			bound from Section~\ref{twistedMomentsSection} to the full product
			$\mathcal N_{\boldsymbol\nu}\mathcal Q_{\boldsymbol\nu}$, followed by
			\eqref{multiblockTwistedConclusion}.
			The base moment contributes $(\log T)^{u^2/4}$.  Put
			\[
			V_a(\boldsymbol\nu)=
			\sum_{2\leq j<\nu_a}
			\sum_{T_{j-1}<p\leq T_j}\frac{|\lambda_a(p)|^2}{p}.
			\]
			The quadratic terms contribute
			$\frac14\sum_a\alpha_a^2V_a(\boldsymbol\nu)+O(1)$.
			By \eqref{verifiedNondistinguishedW}--\eqref{verifiedDistinguishedW},
			the linear sums are $O(1)$ for $a\ne i$, while
			\begin{equation}\label{distinguishedLinearTermsApplication}
				\Re\sum_{2\leq j<\nu_i}(W_{i,j}^++W_{i,j}^-)
				=\frac{u\alpha_i}{2}V_i(\boldsymbol\nu)+O_{\boldsymbol L,
					\boldsymbol\alpha}(1).
			\end{equation}
			Consequently the exponential of
			the prime sums occurring before the factorial savings is
			\[
			\exp\left(
			\frac14\sum_a\alpha_a^2V_a(\boldsymbol\nu)
			+\frac{u\alpha_i}{2}V_i(\boldsymbol\nu)+O(1)
			\right).
			\]
			For $a\neq i$, the coefficient of $V_a(\boldsymbol\nu)$ is
			non-negative.  For the distinguished coordinate write
			\[
			\frac{\alpha_i^2}{4}+\frac{u\alpha_i}{2}
			=\frac{(\alpha_i+u)^2}{4}-\frac{u^2}{4}\geq-4.
			\]
			Thus \eqref{individualVarianceSum} bounds the ratio of the last
			exponential to
			\[
			(\log T)^{\frac14\{(u+\alpha_i)^2+
				\sum_{a\neq i}\alpha_a^2-u^2\}}
			\]
			by
			\[
			x_{\mathscr J}^{C_{\rm tw}}
			\exp\left(4\sum_{j\geq\nu_i}P_j\right),
			\]
			where the sum is empty if $\nu_i=\mathscr J+1$.
			The power of $x_{\mathscr J}$ accounts for the terminal variance
			loss; its exponent is fixed independently of $\Lambda$.
			
			For a non-distinguished first-bad factor,
			\eqref{badBlockFactorialSaving} supplies the saving.  For the
			distinguished factor, set
			$V=V_{i,j}^{(\mathrm{bad})}\leq P_j$.  The multiblock lemma gives
			\[
			\mathcal H_{\ell_j}(V;W^+,W^-)
			\ll \ell_j! V^{\ell_j}
			\exp\left(2\sqrt{\ell_j|W^+W^-|/V}\right).
			\]
			Since $W^\pm=(u/2)V+O(1)$ and $V=P_j+O(1)$, its normalized
			logarithm is at most
			\[
			-C_{\rm cut}\log(eC_{\rm cut})P_j
			+u\sqrt{C_{\rm cut}}P_j+O(\log P_j).
			\]
			Since $C_{\rm cut}\geq50$ and $u\leq4$,
			\[
			C_{\rm cut}\log(eC_{\rm cut})-4\sqrt{C_{\rm cut}}-8>0.
			\]
			By \eqref{primeBlockHierarchy},
			$\sum_{j\geq\nu_i}P_j\leq2P_{\nu_i}$.
			The displayed positive margin therefore absorbs the missing variance
			and the $O(\log P_j)$ term, leaving
			$\exp(-c'\sum_{\nu_a\leq\mathscr J}K_{\nu_a})$.
			The twisted-moment errors are $O(T^{1-\eta})$ for some fixed $\eta>0$.
			Since $\sum_{\nu_a\leq\mathscr J}K_{\nu_a}\ll\log_2T$, the required
			bound is at least $T(\log T)^{-C}=T^{1-o(1)}$, so these errors are
			absorbed uniformly in $\boldsymbol\nu$.  This proves
			\eqref{unifiedTwistedMean}.
		\end{proof}
		
		\subsection{The full-interval, exceptional, and good-set estimates}
		
		We prove \Cref{fullIntervalHeathBrownLemma,oneOverNResidualMeanLemma}
		first, and then combine them to prove
		\Cref{modifiedHeathBrownOneOverN}.  Write
		$\|\sum_m d(m)m^{-s}\|^2=\sum_m|d(m)|^2/m$.
		
		\begin{proof}[Proof of \Cref{fullIntervalHeathBrownLemma}]
			Put $v=2n$.  We apply Heath--Brown's rational-moment argument
			\cite[Sections~2--5]{Heath-Brown1}, checking the coefficient estimates
			for our polynomials.
			Set
			\[
			B(s)=F(s)\mathcal N_{\mathscr J+1}(s;\boldsymbol b)^v.
			\]
			Only in the half-plane $\Re(s)>1$, choose the root fixed by the Euler
			product and write
			\[
			A(s)=F(s)^{1/v}\mathcal N_{\mathscr J+1}(s;\boldsymbol b)
			=\sum_{m\geq1}\frac{a_v(m)}{m^s};
			\qquad A(s)^v=B(s).
			\]
			Thus no logarithm of $F$ is chosen on the critical line: $B$ is the
			single-valued function used by the analytic argument.  At every
			unramified prime in an active block,
			\begin{equation}\label{weightedRootPrimeCoefficient}
				a_v(p)=\frac1v\sum_a\lambda_a(p)
				+\sum_a\left(c_a-\frac1v\right)\lambda_a(p)
				=\sum_ac_a\lambda_a(p).
			\end{equation}
			The Taylor polynomials only involve primes up to $T_{\mathscr J}$, and
			\[
			\log_2T-\log_2T_{\mathscr J}=\log(\widehat Yx_{\mathscr J}^2).
			\]
			After exponentiation, this difference gives the powers of
			$x_{\mathscr J}$ in the coefficient bounds below. The primes at most
			$T_1$ and the omitted ramified primes affect only the implied constants.
			Take $N=T^{1/2}$, put
			$S_N(s)=\sum_{m\leq N}a_v(m)m^{-s}$, and write
			$\sigma_\eta=1/2+\eta/\log T$ and
			\[
			\mathscr L_0=\sum_{m\leq N}\frac{|a_v(m)|^2}{m},
			\qquad
			\mathscr L_\eta=\sum_{m\leq N}
			\frac{|a_v(m)|^2}{m^{1+2\eta/\log T}}.
			\]
			From \eqref{weightedRootPrimeCoefficient} and the Rankin--Selberg prime
			sums in Appendix~\ref{appendixRankinSelberg}, partial summation gives,
			for some fixed $C_0>0$ independent of $\Lambda$,
			\begin{equation}\label{HBRootCoefficientCritical}
				x_{\mathscr J}^{-C_0}(\log T)^{\kappa(\boldsymbol c)}
				\ll \mathscr L_0
				\ll x_{\mathscr J}^{C_0}
				(\log T)^{\kappa(\boldsymbol c)}.
			\end{equation}
			Here and below the constants may depend on the fixed tuple and on
			$\boldsymbol c,v,\widehat Y$, but not on $\Lambda$.
			
			To estimate the discarded Taylor terms, keep $F^{1/v}$ and replace
			all Taylor factors by their full exponentials. For one label $(a,j)$,
			replace its factor by $\exp(zb_a\mathcal P_{a,j}(s)/2)$ and write
			$g_m(z)$ for the resulting Dirichlet coefficients. Let $r_m$ be the
			sum of the terms of degree greater than $K_j$ in $g_m(z)$, evaluated
			at $z=1$. Cauchy's formula, whose tail kernel on $|z|=R>1$ is bounded
			by $R^{-K_j}/(R-1)$, and Cauchy--Schwarz give
			\[
			\sum_m\frac{|r_m|^2}{m^{2\sigma}}
			\ll R^{-2K_j}\max_{|z|=R}
			\sum_m\frac{|g_m(z)|^2}{m^{2\sigma}}
			\qquad(\sigma>1/2,\ R=e^{1/2}).
			\]
			On the selected block,
			\[
			g_p(z)=\sum_{b=1}^r c_b\lambda_b(p)
			+(z-1)\frac{b_a}{2}\lambda_a(p),\qquad
			\sum_{T_{j-1}<p\leq T_j}
			\frac{|g_p(z)|^2-|g_p(1)|^2}{p^{2\sigma}}=O(P_j)
			\]
			uniformly on this circle for $\sigma\geq1/2$.
			Prime powers $p^e$ with $e\geq2$ contribute
			$O(\sum_p p^{-2})=O(1)$ to the logarithms of the coefficient Euler
			products. Comparing these products and using $R^{-2K_j}=e^{-K_j}$ gives
			\begin{equation}\label{HBLabelledTailBound}
				\frac{\displaystyle\sum_m |r_m|^2m^{-2\sigma}}
				{\displaystyle\sum_m |g_m(1)|^2m^{-2\sigma}}
				\ll\exp(-K_j+B_{\rm HB}P_j).
			\end{equation}
			Here $B_{\rm HB}$ depends only on the fixed tuple, $\boldsymbol c$, $v$,
			and $R$, independently of the block endpoints $T_h$ and truncation
			orders $K_h$. These coefficient sums converge for $\sigma>1/2$;
			the same argument holds at $\sigma=1/2$ after restricting to a finite
			prime set. Choose $\widehat Y$ so that $K_j\geq2B_{\rm HB}P_j$.
			Then \eqref{HBLabelledTailBound} is $O(e^{-K_j/2})$.
			For a set of omitted labelled tails, multivariable Cauchy gives
			the same bound with the cutoff exponents summed. Taking square roots,
			expanding the product of the Taylor truncations, and applying the
			triangle inequality bounds the total relative coefficient error by
			\[
			\prod_{a,j}\left(1+O(e^{-K_j/4})\right)-1
			\ll\sum_j e^{-K_j/4}.
			\]
			The final choice of $\Lambda$ makes this error smaller than any fixed
			positive fraction.
			Fix a sufficiently large constant $A_0>1$ from the fixed data.
			At the critical line first restrict to the unramified primes
			\[
			p\leq\min(T_{\mathscr J},N^{1/A_0})
			\]
			and apply Rankin's trick together with \eqref{HBLabelledTailBound}
			to obtain the lower bound in \eqref{HBRootCoefficientCritical}.
			For the critical upper bound one uses
			\[
			\sum_{m\leq N}\frac{|a_v(m)|^2}{m}
			\leq e\sum_m\frac{|a_v(m)|^2}{m^{1+1/\log N}},
			\]
			and the same comparison gives, for a fixed $C_1>0$ independent of $\Lambda$,
			\begin{equation}\label{HBRootCoefficientShifted}
				\mathscr L_\eta\gg\eta^{-C_1}\mathscr L_0
				\qquad(1\leq\eta\leq x_{\mathscr J})
			\end{equation}
			Indeed, on the active primes $p\leq T_{\mathscr J}$ the change in the
			logarithm of the Euler product is
			$O(\eta/(\widehat Yx_{\mathscr J}^2))$.  Above
			$T_{\mathscr J}$ the prime coefficient is
			$v^{-1}\sum_a\lambda_a(p)$, and partial summation in
			\eqref{rankinSelbergQuantitativeMertens} shows that the logarithmic loss
			is $O(\log(2+\eta))$.  The terms of local degree at least two and the
			Taylor tails contribute $O(1)$ by the preceding comparison.
			We now apply the standard Heath--Brown comparison.  With
			$H=C_{\rm G}\sqrt{\log T}$ put
			\[
			w_T(t)=\int_{T+H}^{2T-H}e^{-(t-u)^2}\,du,
			\qquad G(s)=B(s)-S_N(s)^v,
			\]
			and define
			\begin{align*}
				\mathcal L(\sigma)&=\int_{\mathbb R}|S_N(\sigma+it)|^2w_T(t)dt,\\
				\mathcal K(\sigma)&=\int_{\mathbb R}|G(\sigma+it)|^{2/v}w_T(t)dt,\\
				\mathcal J(\sigma)&=\int_{\mathbb R}|B(\sigma+it)|^{2/v}w_T(t)dt.
			\end{align*}
			Apply Gabriel's theorems to
			$B(s)e^{v(s-iu)^2/2}$ and $G(s)e^{v(s-iu)^2/2}$, and the square-mean
			estimate to $S_N(s)e^{(s-iu)^2/2}$, then integrate over
			$u\in[T+H,2T-H]$.  The resulting weight is $e^{\sigma^2}w_T(t)$.
			The weighted Montgomery--Vaughan theorem and
			\eqref{HBRootCoefficientCritical}--\eqref{HBRootCoefficientShifted}
			give, with constants independent of $\Lambda$,
			\begin{equation}\label{HBWeightedRelativeL}
				\mathcal L(\tfrac12)\asymp T\mathscr L_0,
				\qquad
				\mathcal L(\sigma_\eta)\gg
				\eta^{-C_1}\mathcal L(\tfrac12)
				\quad(1\leq\eta\leq x_{\mathscr J}).
			\end{equation}
			Because the coefficients of $G$ vanish through $N$,
			\[
			G(\tfrac54+it)\ll_\varepsilon N^{-1/4+\varepsilon}.
			\]
			If $F$ contains the zeta factor, multiply $G$ and $B$ by $s-1$ before
			applying Gabriel's strip and rectangle theorems.  For each localization
			center $u\in[T+H,2T-H]$, one has $|s-1|\asymp T$ throughout the
			range $|t-u|\ll\sqrt{\log T}$; the contribution outside this range
			is negligible by the decay of $e^{-(t-u)^2}$, as below.  Thus the
			pole-removing factor introduces comparable factors $T^{2/v}$ in every
			boundary integral. Heath--Brown's $J$--$K$--$L$ argument now applies
			verbatim after normalizing all three quantities by the actual value
			$\mathcal L(1/2)$.  The relative lower bound in
			\eqref{HBWeightedRelativeL} ensures that a sufficiently large fixed
			$\eta$ may be chosen independently of $\Lambda$; the final terminal
			choice includes $\Lambda\geq\eta$, so that $\eta\leq x_{\mathscr J}$.
			The choice $N=T^{1/2}$ gives a power saving for the integral involving
			$G$ on the right boundary; Gabriel convexity propagates this saving to
			$\sigma_\eta$, and
			\[
			|x+y|^{2/v}\ll_v|x|^{2/v}+|y|^{2/v}
			\]
			compares $\mathcal L$, $\mathcal J$, and $\mathcal K$.  A final strip
			interpolation for $B$ between $1/2$ and $3/2$, with this fixed $\eta$,
			yields
			\[
			\mathcal J(\tfrac12)\gg\mathcal L(\tfrac12)
			\geq c_{\rm HB}x_{\mathscr J}^{-C_{\rm HB}}
			T(\log T)^{\kappa(\boldsymbol c)}.
			\]
			To recover the interval integral, note
			that $0\leq w_T(t)\leq\sqrt\pi$, while, for
			$t\notin[T,2T]$,
			\[
			w_T(t)\ll T
			\exp\!\left(-\{H+\operatorname{dist}(t,[T,2T])\}^2\right).
			\]
			The standard polynomial vertical-strip bounds for the fixed
			$L$-functions, together with the elementary bounds for the finite
			Dirichlet polynomials, make the contribution of this tail $O(T^{-A})$
			for any prescribed $A$, once the fixed constant
			$C_{\rm G}$ is chosen large enough.  This localization constant does
			not enter the prime-block construction.  Since
			\[
			|B(\tfrac12+it)|^{2/v}
			=|F(\tfrac12+it)|^{2/v}
			|\mathcal N_{\mathscr J+1}(\tfrac12+it;\boldsymbol b)|^2,
			\]
			we obtain \eqref{weightedHeathBrownComparison}.
			For \eqref{pureMixedSecondLowerBound}, take $v=1$, $A=B=F$,
			and let $S_N$ truncate the Dirichlet series of $F$.  By
			\eqref{rankinSelbergQuantitativeMertens},
			\[
			\sum_{p\leq x}\frac{|\sum_a\lambda_a(p)|^2}{p}
			=r\log_2x+O(1).
			\]
			Thus $\sum_{m\leq N}|a_1(m)|^2/m\asymp(\log T)^r$, and the same
			comparison, with the square-mean triangle inequality, gives
			\eqref{pureMixedSecondLowerBound}.
		\end{proof}
		
		\begin{proof}[Proof of \Cref{oneOverNResidualMeanLemma}]
			Fix $\boldsymbol\nu\neq
			(\mathscr J+1,\ldots,\mathscr J+1)$ and put
			$\nu_{\min}=\min_a\nu_a$.  Use the constants
			$m_{\rm res},A_{\rm res},C_{\rm res}$ in
			\eqref{residualMajorantParameters}, fixed before $C_{\rm cut}$ and
			$\widehat Y$.  For every $x\geq0$,
			\begin{equation}\label{residualIntegerMajorant}
				x^{2/\vartheta_0}\leq1+x^{2m_{\rm res}}.
			\end{equation}
			Apply this to the exceptional factors, obtaining sums of squared moduli
			of Dirichlet polynomials.
			Before $\nu_{\min}$ all factors belong to the common good prefix.
			The coefficient estimate \eqref{multiblockCoefficientBound}, with
			coefficients $c_a\lambda_a(p)$ and no fixed-power labels, gives
			\begin{equation}\label{residualCommonPrefixNorm}
				\left\|
				\prod_{a=1}^r\prod_{2\leq j<\nu_{\min}}
				\mathcal N_{a,j}(s;2c_a)
				\right\|^{\,2}
				\ll(\log T)^{\kappa(\boldsymbol c)}.
			\end{equation}
			Here the diagonal sum is at most $\kappa(\boldsymbol c)\log_2T+O(1)$,
			and \eqref{crossBlockCovariances} bounds the cross terms after summing
			the common prefix.
			For $j\geq\nu_{\min}$ put
			\[
			\mathcal R_j=\{a:\nu_a\leq j\},\qquad
			\mathcal F_j=\{a:\nu_a=j\},\qquad f_j=|\mathcal F_j|,
			\]
			and let
			\[
			U_j=\sum_{T_{j-1}<p\leq T_j}\frac1p,\qquad
			S_j(s)=\sum_{T_{j-1}<p\leq T_j}p^{-s},
			\]
			with the same unramified-prime convention as before.  By
			\eqref{unweightedVarianceComparison}, $U_j\leq D_0P_j$.
			There are at most $2^{|\mathcal R_j|}\leq2^r$ selections from
			\eqref{residualIntegerMajorant} on this block. The Taylor factors in
			each selected Dirichlet polynomial are
			\[
			\prod_{a:j<\nu_a}\mathcal N_{a,j}(s;2c_a)
			\prod_{a\in E}\mathcal N_{a,j}(s;b_a)^{m_{\rm res}},
			\qquad E\subseteq\mathcal R_j.
			\]
			Since $|\lambda_a(p)|\leq d_a\leq2$, the absolute coefficients of their product
			are dominated coefficientwise by those of
			$\exp(A_{\rm res}S_j(s))$, independently of $K_j$.
			Let $\mathcal A_{j,E}$ be their product with the factors of
			$\mathcal Q_{\boldsymbol\nu}$ on this block. Put $L_j=f_j\ell_j$.
			The product of these latter factors has absolute coefficients bounded by those of
			\[
			\frac{2^{L_j}S_j(s)^{L_j}}
			{(C_{\rm cut}P_j)^{L_j}}.
			\]
			For an integer $L\geq0$ and $A\geq0$, write
			\[
			S_j(s)^L e^{AS_j(s)}=\sum_n\frac{c(n)}{n^s}.
			\]
			If $n$ is supported on this block and $\Omega(n)=L+k$, then
			\[
			c(n)=\frac{(L+k)!A^k}{k!}
			\prod_{p^e\Vert n}\frac1{e!}\qquad(k\geq0),
			\]
			while $c(n)=0$ when $\Omega(n)<L$. The multinomial theorem and
			Leibniz's rule give
			\begin{align}
				\sum_n\frac{|c(n)|^2}{n}
				&\leq\sum_n\frac{|c(n)|^2}{n}\prod_{p^e\Vert n}e!
				\nonumber\\
				&=\sum_{k\geq0}\frac{(L+k)!}{(k!)^2}
				A^{2k}U_j^{L+k}\nonumber\\
				&=e^{A^2U_j}(L!)^2
				\sum_{h=0}^L
				\frac{U_j^{L-h}(A^2U_j^2)^h}{(L-h)!(h!)^2}
				\nonumber\\
				&\leq e^{A^2U_j}L!U_j^L e^{2A\sqrt{LU_j}}.
				\label{residualCoefficientBound}
			\end{align}
			The last inequality follows from $L!/(L-h)!\leq L^h$ and
			$\sum_{h\geq0}y^{2h}/(h!)^2\leq e^{2y}$.
			If $f_j\geq1$, apply \eqref{residualCoefficientBound} with $L=L_j$ and
			$A=A_{\rm res}$, multiply by $4^{L_j}(C_{\rm cut}P_j)^{-2L_j}$,
			and use $U_j\leq D_0P_j$. This bounds $\|\mathcal A_{j,E}\|^2$ by
			\[
			\exp(A_{\rm res}^2D_0P_j)
			\frac{L_j!(4D_0P_j)^{L_j}}
			{(C_{\rm cut}P_j)^{2L_j}}
			\exp\!\left(2A_{\rm res}\sqrt{L_jD_0P_j}\right).
			\]
			Use $L_j!\leq L_j^{L_j}$ and
			$L_j\leq rC_{\rm cut}P_j$, and put
			$\tau=\log(C_{\rm cut}/(4D_0r))\geq16$.
			The logarithm of this bound is at most
			\[
			A_{\rm res}^2D_0P_j-\tau L_j
			+2A_{\rm res}\sqrt{L_jD_0P_j}
			\leq 2A_{\rm res}^2D_0P_j-\frac{\tau}{2}L_j.
			\]
			Here we used
			$2A\sqrt{LU}\leq(\tau/2)L+(2/\tau)A^2U$.
			Since $C_{\rm cut}\geq2$ and $P_j\geq1$,
			$L_j\geq f_jC_{\rm cut}P_j/2$.  Therefore, after summing the
			at most $2^r$ selections,
			\begin{equation}\label{oneOverNLocalBadBlockCoefficientBound}
				\sum_{E\subseteq\mathcal R_j}
				\|\mathcal A_{j,E}\|^{\,2}
				\leq
				\exp\!\left(C_{\rm res}P_j
				-4C_{\rm cut}f_jP_j\right).
			\end{equation}
			For $f_j=0$, use the $L=0$ case of
			\eqref{residualCoefficientBound}.  The bound is independent of
			the truncation orders $K_j$, $\widehat Y$, and $\Lambda$.
			Different blocks have disjoint prime supports, so the sums of squared
			coefficients divided by their indices factor over the blocks.  Every selected polynomial has length
			at most $T^{\varepsilon_0}$ by the length calculation, which
			includes the fixed power $m_{\rm res}$.  Montgomery--Vaughan,
			\eqref{residualCommonPrefixNorm}, and
			\eqref{oneOverNLocalBadBlockCoefficientBound} give
			\begin{align*}
				&\int_T^{2T}
				|\mathcal N_{\boldsymbol\nu}(\tfrac12+it;
				\boldsymbol\beta^{(0)})
				\mathcal Q_{\boldsymbol\nu}(\tfrac12+it)|^2
				R_{\boldsymbol\nu}(t)^{1/\vartheta_0}\,dt\\
				&\qquad\ll T(\log T)^{\kappa(\boldsymbol c)}
				\exp\!\left(
				C_{\rm res}\sum_{j\geq\nu_{\min}}P_j
				-4C_{\rm cut}\sum_{\nu_a\leq\mathscr J}P_{\nu_a}
				\right).
			\end{align*}
			By \eqref{primeBlockHierarchy},
			\[
			\sum_{j\geq\nu_{\min}}P_j
			\leq\sum_{\nu_a\leq\mathscr J}\sum_{j\geq\nu_a}P_j
			\leq2\sum_{\nu_a\leq\mathscr J}P_{\nu_a}.
			\]
			Since $C_{\rm cut}\geq C_{\rm res}$ and
			$K_j\leq11C_{\rm cut}\widehat YP_j$, the last exponent is at most
			\[
			-2C_{\rm cut}\sum_{\nu_a\leq\mathscr J}P_{\nu_a}
			\leq-\frac2{11\widehat Y}\sum_{\nu_a\leq\mathscr J}K_{\nu_a}.
			\]
			This proves \eqref{oneOverNResidualMeanEstimate} with
			$c_0=2/(11\widehat Y)$, independently of $\Lambda$.
		\end{proof}
		
		\begin{proof}[Proof of \Cref{modifiedHeathBrownOneOverN}]
			Write $s=\tfrac12+it$ and
			$W(t)=|F(s)|^{1/n}|\mathcal N_{\mathscr J+1}(s;\boldsymbol b)|^2$.
			We combine \Cref{fullIntervalHeathBrownLemma} with
			\begin{equation}\label{oneOverNComplementTarget}
				\int_{[T,2T]\setminus\mathscr G}W(t)\,dt
				\leq\frac12\int_T^{2T}W(t)\,dt.
			\end{equation}
			The parameters satisfy
			\begin{equation}\label{oneOverNBalanceIdentities}
				\frac4{d_i}\vartheta_i=\frac1n,\qquad
				\vartheta_0+\sum_i\vartheta_i=1,\qquad
				\vartheta_0\boldsymbol\beta^{(0)}
				+\sum_i\vartheta_i\boldsymbol\beta^{(i)}=\boldsymbol b.
			\end{equation}
			On $\mathscr B(\boldsymbol\nu)$, apply
			\Cref{factorwiseExponentialLemma} to the good prefixes and use
			\[
			|\mathcal N_{\mathscr J+1}(s;\boldsymbol b)|^2
			=|\mathcal N_{\boldsymbol\nu}(s;\boldsymbol b)|^2R_{\boldsymbol\nu}(t),
			\qquad
			\mathbf1_{\mathscr B(\boldsymbol\nu)}\leq|\mathcal Q_{\boldsymbol\nu}(s)|^2.
			\]
			By \eqref{oneOverNBalanceIdentities} and \eqref{cutoffSummability},
			\begin{align}
				\mathbf1_{\mathscr B(\boldsymbol\nu)}(t)W(t)
				&\ll
				\left(
				|\mathcal N_{\boldsymbol\nu}(s;\boldsymbol\beta^{(0)})
				\mathcal Q_{\boldsymbol\nu}(s)|^2
				R_{\boldsymbol\nu}(t)^{1/\vartheta_0}
				\right)^{\vartheta_0}\nonumber\\
				&\quad\times\prod_{i=1}^r
				\left(
				|L_i(s)|^{4/d_i}
				|\mathcal N_{\boldsymbol\nu}(s;\boldsymbol\beta^{(i)})
				\mathcal Q_{\boldsymbol\nu}(s)|^2
				\right)^{\vartheta_i}.
				\label{oneOverNBadSetPointwise}
			\end{align}
			Integrate and apply H\"older, \Cref{oneOverNResidualMeanLemma}, and
			\eqref{unifiedTwistedMean}.  Each moment bound has logarithmic exponent
			$\kappa(\boldsymbol c)$, since
			$\boldsymbol\beta^{(i)}+(4/d_i)\boldsymbol e_i=2\boldsymbol c$.
			Thus, with constants independent of $\Lambda$,
			\[
			\int_{\mathscr B(\boldsymbol\nu)}W(t)\,dt
			\ll x_{\mathscr J}^{C_*}T(\log T)^{\kappa(\boldsymbol c)}
			\exp\left(-c\sum_{\nu_a\leq\mathscr J}K_{\nu_a}\right).
			\]
			The sum over all bad sets satisfies
			\[
			\sum_{\boldsymbol\nu\ne(\mathscr J+1,\ldots,\mathscr J+1)}
			\exp\left(-c\sum_{\nu_a\leq\mathscr J}K_{\nu_a}\right)
			=\left(1+\sum_{j=2}^{\mathscr J}e^{-cK_j}\right)^r-1
			\ll e^{-c'x_{\mathscr J}},
			\]
			using $K_j\gg\widehat Yx_j$ and $x_{j-1}=e^{x_j}$.  Hence
			\begin{equation}\label{oneOverNBadSetRemoval}
				\int_{[T,2T]\setminus\mathscr G}W(t)\,dt
				\ll x_{\mathscr J}^{C_*}e^{-c'x_{\mathscr J}}
				T(\log T)^{\kappa(\boldsymbol c)}.
			\end{equation}
			Divide by \eqref{weightedHeathBrownComparison}.  The ratio is
			$O(x_{\mathscr J}^{C_*+C_{\rm HB}}e^{-c'x_{\mathscr J}})$, uniformly
			in $\Lambda$.  Choose $\Lambda$ so that this is at most $1/2$
			for every $x_{\mathscr J}\geq\Lambda$.  This proves
			\eqref{oneOverNComplementTarget}, and therefore
			\[
			\int_{\mathscr G}W(t)\,dt
			\geq\frac12c_{\rm HB}x_{\mathscr J}^{-C_{\rm HB}}
			T(\log T)^{\kappa(\boldsymbol c)}
			\gg T(\log T)^{\kappa(\boldsymbol c)},
			\]
			since $x_{\mathscr J}<e^\Lambda$ and $\Lambda$ is fixed.
		\end{proof}
		
	}
	
	\appendix
	
	{\section{Classical twisted fourth moments and contour identities}\label{appendixClassicalTwistedMoments}}
	{
		Retain the notation and hypotheses on $A$ and $\Phi$ from
		Section~\ref{twistedMomentsSection}.
		\begin{prop}\label{bblrFourthMoment}\textup{\cite[Theorem~1.1]{BettinBuiLiRadziwill}}
			Let $T\geq2$, let $|\alpha_j|\ll(\log T)^{-1}$ for
			$1\leq j\leq4$, and put
			\[
			I_{\alpha_1,\alpha_2,\alpha_3,\alpha_4}
			=\int_{\mathbb R}\prod_{j=1}^{2}
			\zeta(\tfrac12+\alpha_j+it)
			\prod_{j=3}^{4}\zeta(\tfrac12+\alpha_j-it)
			|A(\tfrac12+it)|^2\Phi(t/T)\,dt.
			\]
			Then
			\begin{align*}
				I_{\alpha_1,\alpha_2,\alpha_3,\alpha_4}
				={}&\sum_g\sum_{(n,m)=1}
				\frac{a(gn)\overline{a(gm)}}{gnm}
				\int_{\mathbb R}\Bigg(
				Z_{\alpha_1,\alpha_2,\alpha_3,\alpha_4,n,m}\\
				&\quad+\left(\frac t{2\pi}\right)^{-\alpha_1-\alpha_2-\alpha_3-\alpha_4}
				Z_{-\alpha_3,-\alpha_4,-\alpha_1,-\alpha_2,n,m}\\
				&\quad+\left(\frac t{2\pi}\right)^{-\alpha_1-\alpha_3}
				Z_{-\alpha_3,\alpha_2,-\alpha_1,\alpha_4,n,m}\\
				&\quad+\left(\frac t{2\pi}\right)^{-\alpha_1-\alpha_4}
				Z_{-\alpha_4,\alpha_2,\alpha_3,-\alpha_1,n,m}\\
				&\quad+\left(\frac t{2\pi}\right)^{-\alpha_2-\alpha_3}
				Z_{\alpha_1,-\alpha_3,-\alpha_2,\alpha_4,n,m}\\
				&\quad+\left(\frac t{2\pi}\right)^{-\alpha_2-\alpha_4}
				Z_{\alpha_1,-\alpha_4,\alpha_3,-\alpha_2,n,m}
				\Bigg)\Phi(t/T)\,dt\\
				&+O_\varepsilon(T^{1-\eta}),
			\end{align*}
			where $\eta=\eta(\varepsilon,\vartheta)>0$ depends on the fixed gap
			between $\vartheta$ and $1/4$.
		\end{prop}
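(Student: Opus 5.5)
This is cited as \cite[Theorem~1.1]{BettinBuiLiRadziwill}, so the plan is to explain how the displayed formula follows from that theorem, not to reprove it. The statement is the twisted fourth moment of $\zeta$ with shifts, with a smooth weight $\Phi(t/T)$ and a mollifier-type polynomial $A$ of length $T^\vartheta$, $\vartheta<1/4$. I would take it over essentially verbatim, with three checks.

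\textbf{Step 1: match the hypotheses.} I would first check that the assumptions kept from Section~\ref{twistedMomentsSection} are the ones needed. These are: $\Phi$ smooth and supported in $[1/2,4]$ with bounded derivatives, $a(n)\ll_\varepsilon n^\varepsilon$, and $\vartheta<\tfrac14-\varepsilon$. I would also check that the shifts $|\alpha_j|\ll(\log T)^{-1}$ lie in the admissible range of \cite{BettinBuiLiRadziwill}. That theorem is stated for shifts of this size, and the error term is uniform in them.

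\textbf{Step 2: match the main terms.} I would write the BBLR main term in the normalization used here. It is a sum over the six swaps of shifts allowed by the approximate functional equation: the identity, the full swap $(\alpha_1,\alpha_2)\leftrightarrow(-\alpha_3,-\alpha_4)$, and the four single transpositions. Each swap carries the factor $(t/2\pi)^{-(\text{sum of the swapped pair})}$, and each has an arithmetic factor. I would identify that arithmetic factor with $Z_{\cdot,n,m}=A\,B_{\cdot,n}\,B_{\cdot^\vee,m}$ after writing $n\to gn$, $m\to gm$ with $(n,m)=1$. The diagonal Euler product factors as $\prod_p S_p$. Dividing by the local factor at $p^{\nu}\Vert n$ produces exactly the ratios defining $B$. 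The global factor equals $A_{\boldsymbol\alpha}=\prod\zeta(1+\alpha_i+\alpha_j)/\zeta(2+\sum\alpha)$, up to an absolutely convergent Euler product that BBLR absorb into the same local ratios. This step is bookkeeping between their notation and the definitions of $A$, $B$, $Z$ above; it is where I expect most of the care to be needed, especially the order of arguments in $B_{\alpha_3,\alpha_4,\alpha_1,\alpha_2,m}$ and the sign conventions in the swapped terms.

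\textbf{Step 3: the error term.} BBLR give the remainder $O_\varepsilon(T^{1-\eta})$ with $\eta>0$ depending on $\tfrac14-\vartheta$. This is where the restriction $\vartheta<1/4$ enters, through their off-diagonal (Kloosterman/spectral) analysis, so nothing further needs to be proved. The one remaining point is analytic: the individual terms have poles when $\alpha_i+\alpha_j=0$, but their sum is holomorphic. Since the Euler products are interpreted by meromorphic continuation near $\boldsymbol 0$, the identity holds as stated for generic shifts and then for all shifts by continuity. This holomorphy is what later allows the contour representation and the $\max_{|z_j|=3^j/\log T}$ bound in \Cref{twistedFourthMomentUpperBoundMultiplicative}.
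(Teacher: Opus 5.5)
Your route is the paper's: the proposition is imported from \cite[Theorem~1.1]{BettinBuiLiRadziwill}, and your bookkeeping of the six swapped terms and of $Z=A\,B\,B$ is fine. In fact $A_{\boldsymbol\alpha}=\prod_pS_p(\boldsymbol\alpha)$ holds exactly by Ramanujan's identity, so no extra absolutely convergent Euler product appears.

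The one step the paper actually carries out is adapting the weight, and your Step~1 skips it by asserting the hypotheses already match. The paper's remark indicates that BBLR's theorem is stated for a weight on a single dyadic range, whereas $\Phi$ here is supported on $[1/2,4]$. So one writes $\Phi$ as a fixed finite sum of smooth pieces, each of the form $\Psi_i(t/T_i)$ with $T_i\asymp T$. This rescaling leaves $|\alpha_j|\ll1/\log T$ and the gap $\tfrac14-\vartheta$ essentially unchanged. One then applies the theorem to each piece and sums, which is legitimate because the main terms are linear in the weight. The resulting error $O_\varepsilon(T^{1/2+2\vartheta+\varepsilon}+T^{3/4+\vartheta+\varepsilon})$ is a power saving for $\vartheta<\tfrac14$.
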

		
		The version of \cite[Theorem~1.1]{BettinBuiLiRadziwill} with the present
		weight follows from a fixed dyadic partition of $\Phi$.  Its error
		$O_\varepsilon(T^{1/2+2\vartheta+\varepsilon}
		+T^{3/4+\vartheta+\varepsilon})$ is a power saving in the range used
		here.
		
		Put
		\[
		\Sigma_{\boldsymbol\alpha}=\alpha_1+\alpha_2+\alpha_3+\alpha_4,
		\qquad
		Q_{\boldsymbol\alpha}(w)
		=(w-\alpha_1)(w-\alpha_2)(w+\alpha_3)(w+\alpha_4),
		\]
		and define
		\begin{align*}
			F_{\boldsymbol\alpha}(z_1,z_2,z_3,z_4)
			={}&\sum_{n,m}\frac{a(n)\overline{a(m)}}{[m,n]}
			Z_{z_1,z_2,z_3,z_4,\frac n{(m,n)},\frac m{(m,n)}}\\
			&\times\int_{\mathbb R}\Phi(t/T)
			\left(\frac t{2\pi}\right)^{
				(z_1+z_2+z_3+z_4-\Sigma_{\boldsymbol\alpha})/2}\,dt.
		\end{align*}
		This function is meromorphic near the origin and is symmetric separately
		in $(z_1,z_2)$ and $(z_3,z_4)$.
		
		\begin{lem}\label{snaithContourLemma}
			Assume $|\alpha_j|\leq1/\log T$ for $1\leq j\leq4$.  Then
			\begin{align*}
				I_{\alpha_1,\alpha_2,\alpha_3,\alpha_4}
				={}&O_\varepsilon(T^{1-\eta})\\
				&+\frac1{4(2\pi i)^4}
				\oint_{\substack{|z_j|=3^j/\log T\\1\leq j\leq4}}
				\frac{F_{\boldsymbol\alpha}(z_1,z_2,z_3,z_4)
					\Delta(z_1,z_2,-z_3,-z_4)^2}
				{\displaystyle\prod_{j=1}^{2}
					Q_{\boldsymbol\alpha}(z_j)Q_{\boldsymbol\alpha}(-z_{j+2})}
				\,dz_1\,dz_2\,dz_3\,dz_4,
			\end{align*}
			where $\Delta(w_1,w_2,w_3,w_4)=\prod_{i<j}(w_i-w_j)$.
		\end{lem}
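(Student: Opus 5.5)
The plan is to follow the standard ``Snaith-type'' contour representation of the shifted fourth moment, in the spirit of Conrey--Farmer--Keating--Rubinstein--Snaith. We start from \Cref{bblrFourthMoment}, which writes $I_{\alpha_1,\alpha_2,\alpha_3,\alpha_4}$ as a sum of six swap terms plus $O_\varepsilon(T^{1-\eta})$. Reindex $gn,gm$ as $n,m$ with $g=(n,m)$; then $1/(gnm)=1/[n,m]$ and the reduced indices become $n/(m,n)$, $m/(m,n)$. Each of the six terms then takes the form
\[
\sum_{n,m}\frac{a(n)\overline{a(m)}}{[m,n]}
Z_{\boldsymbol\beta,\frac n{(m,n)},\frac m{(m,n)}}
\int\Phi(t/T)\Big(\tfrac t{2\pi}\Big)^{(\Sigma_{\boldsymbol\beta}-\Sigma_{\boldsymbol\alpha})/2}dt .
\]
Here $\boldsymbol\beta$ is obtained from $\boldsymbol\alpha$ by one of the six admissible swaps. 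Indeed, for the swap exchanging $\alpha_1$ with $-\alpha_3$ one has $\Sigma_{\boldsymbol\beta}-\Sigma_{\boldsymbol\alpha}=-2(\alpha_1+\alpha_3)$, which matches the factor $(t/2\pi)^{-\alpha_1-\alpha_3}$. In this notation the main term of $I$ is exactly
\[
\sum_{\boldsymbol\beta\in\mathcal S(\boldsymbol\alpha)}F_{\boldsymbol\alpha}(\boldsymbol\beta),
\]
where $\mathcal S(\boldsymbol\alpha)$ is the set of six choices of which two of $\{\alpha_1,\alpha_2,-\alpha_3,-\alpha_4\}$ occupy the ``plus'' slots (with the obvious sign conventions).

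The second step is to evaluate the contour integral by residues. In each $z_j$ the integrand has poles only at the zeros of $Q_{\boldsymbol\alpha}$: $z_1,z_2\in\{\alpha_1,\alpha_2,-\alpha_3,-\alpha_4\}$, and likewise $-z_3,-z_4$ in the same set. These lie inside the circles, since $|\alpha_j|\le 1/\log T<3/\log T$.

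We must also check that $F_{\boldsymbol\alpha}$ is holomorphic on the polydisc $|z_j|\le 3^4/\log T$. The factor $A_{\boldsymbol z}$ contains $\zeta(1+z_i+z_j)$, which has polar hyperplanes $z_i+z_j=0$ for $i\le2<j$. These are killed by the factor $\Delta(z_1,z_2,-z_3,-z_4)^2$, which contains $(z_i+z_j)^2$. So the integrand is meromorphic with poles only at the roots of $Q_{\boldsymbol\alpha}$. The local factors $B$ are holomorphic near $0$, and the sums over $n,m\le T^\vartheta$ are finite, so $F_{\boldsymbol\alpha}$ is fine otherwise.

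Taking residues iteratively, the Vandermonde squared vanishes whenever two of $z_1,z_2,-z_3,-z_4$ coincide. The surviving residues are therefore those where $\{z_1,z_2,-z_3,-z_4\}$ is a permutation of $\{\alpha_1,\alpha_2,-\alpha_3,-\alpha_4\}$. At such a point the residue is
\[
\frac{\Delta^2}{\prod Q'}=\pm 1 .
\]
This is the classical identity: $\Delta(w)^2/\prod_j\prod_{k\ne j}(w_j-w_k)$ equals $\pm1$, and the sign is absorbed by the orientation and sign changes from $z\mapsto -z$ in $Q(-z_{j+2})$.

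The symmetry of $F_{\boldsymbol\alpha}$ in $(z_1,z_2)$ and in $(z_3,z_4)$ groups the $4!=24$ permutations into $6$ classes of size $4$. This produces the factor $\frac14$ and leaves exactly the six swap terms identified in the first step.

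The main obstacle is this residue bookkeeping, in particular the sign check. The genuine subtlety is that the $\alpha_j$ may coincide (e.g. $\boldsymbol\alpha=0$), so the poles are not simple. To handle this, I would first prove the identity for distinct $\alpha_j$ in general position. I would then extend it by continuity: both sides are holomorphic in $\boldsymbol\alpha$ on $|\alpha_j|\le1/\log T$. The left side is holomorphic since the shifted moment is, and the error term is uniform in $\boldsymbol\alpha$ by \Cref{bblrFourthMoment}. The right side is a contour integral whose contours stay away from the poles.
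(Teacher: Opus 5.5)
Your proposal is correct and follows the paper's route. The paper simply invokes \cite[Lemma~2.5.1]{ConreyFarmerKeatingRubinsteinSnaith} in the signed variables $(z_1,z_2,-z_3,-z_4)$ and matches the six exponents; you carry out the residue/Vandermonde computation behind that lemma yourself. Your check that $F_{\boldsymbol\alpha}\Delta^2$ is holomorphic and your continuity extension to coincident shifts fill in details the citation leaves implicit, and the residue factor is exactly $+1$, since $(-1)^{\binom42}=1$ and the two substitutions $z_{j+2}\mapsto -z_{j+2}$ cancel.
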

		
		\begin{proof}
			This is the specialization of
			\cite[Lemma~2.5.1]{ConreyFarmerKeatingRubinsteinSnaith} with signed
			variables $(z_1,z_2,-z_3,-z_4)$.  The identity residue has exponent
			zero, the full-dual residue has exponent
			$-\Sigma_{\boldsymbol\alpha}$, and, for example, the residue
			$(-\alpha_3,\alpha_2,-\alpha_1,\alpha_4)$ has exponent
			$-\alpha_1-\alpha_3$.  These are exactly the six powers in
			\Cref{bblrFourthMoment}.
		\end{proof}
		
		\begin{proof}[Proof of \Cref{twistedFourthMomentUpperBoundMultiplicative}]
			Let $\boldsymbol\alpha\to\boldsymbol0$ in \Cref{snaithContourLemma}.
			The original integral is holomorphic in the shifts, and the fixed
			contours avoid the poles, so the denominator becomes
			$z_1^4z_2^4z_3^4z_4^4$.
			On these circles,
			\[
			\Delta(z_1,z_2,-z_3,-z_4)^2\ll(\log T)^{-12},
			\qquad A_{z_1,z_2,z_3,z_4}\ll(\log T)^4,
			\]
			and
			\[
			\int_{\mathbb R}\Phi(t/T)
			\prod_{j=1}^{4}\left(\frac t{2\pi}\right)^{z_j/2}dt\ll T.
			\]
			Trivial estimation of the contour integral gives the asserted bound.
		\end{proof}
		
		\subsection*{The fixed-character variant}
		Retain the notation $A^\chi_{\boldsymbol\alpha}$,
		$B^\chi_{\boldsymbol\alpha,n}$, and
		$Z^\chi_{\boldsymbol\alpha,n,m}$ from
		Section~\ref{twistedMomentsSection}.  A fixed-conductor shifted
		fourth-moment formula analogous to
		\cite[Theorem~1.1]{BettinBuiLiRadziwill} was communicated to the author by
		Winston Heap (private communication).  \Cref{twistedFourthMomentUpperBoundMultiplicativeDirichlet} is its
		unshifted contour consequence.  The formula follows from the same argument after
		inserting the fixed character in the shifted divisor coefficients,
		deleting the finitely many Euler factors at primes dividing the
		conductor, and replacing the archimedean scale $t/(2\pi)$ by
		$qt/(2\pi)$.  Since $q$ is fixed, neither the permitted polynomial
		length nor the power-saving error changes.

	}
	
	{\section{Combinatorial identities}\label{appendixAlgebraicTransport}}
	{This appendix collects the finite-dimensional coefficient identities used in the Dirichlet-polynomial estimates of Section~\ref{twistedMomentsSection}.  The arguments themselves are purely algebraic.}
	\begin{lem}\label{coefficientKernelBookkeepingLemma}
		{Let $\mathcal{P}$ be a finite set of primes and $\mathbf{z}=(z_p)_{p \in \mathcal{P}}$. To each $p \in \mathcal{P}$ we associate a kernel $K_p:\mathbb{Z}_{\geq0}\times \mathbb{Z}_{\geq0}\to\mathbb{C}$, and we put
			\[
			K(\boldsymbol{\alpha},\boldsymbol{\beta})
			=\prod_{p\in\mathcal P}K_p(\alpha_p,\beta_p).
			\]}
		
		Given complex numbers $u_j(p)$ and $v_h(p)$, we define, for $1\leq j \leq r$ and $1\leq h \leq q$, the objects $$L_j(\mathbf{z})=\sum_{p \in \mathcal{P}} u_j(p)z_p, \qquad M_h(\mathbf{z})=\sum_{p\in \mathcal{P}} v_h(p)z_p.$$ {Given positive integers $\ell_1,\dots,\ell_r$ and non-negative integers $X_1,\dots,X_q$, put
			\[
			E_X(z)=\sum_{m=0}^{X}\frac{z^m}{m!},
			\qquad
			F(\mathbf{z})=\prod_{j=1}^rL_j(\mathbf z)^{\ell_j}
			\prod_{h=1}^qE_{X_h}(M_h(\mathbf z))
			=\sum_{\boldsymbol\alpha}c_{\boldsymbol\alpha}
			\mathbf z^{\boldsymbol\alpha}.
			\]}
		Here $\boldsymbol{z}^{\boldsymbol{\alpha}}$ is the usual multi-index notation $\prod_{p\in \mathcal{P}} z_p^{\alpha_p}$, and the coefficients $c_{\boldsymbol{\alpha}}$ are defined implicitly. 
		{All four arrays in the following definition are required to have
			non-negative integer entries.}
		
		Finally, let
		\[
		\begin{aligned}
			\mathbf a&=(a_{j,p})_{1\leq j\leq r,\ p\in\mathcal P},
			&\widetilde{\mathbf a}&=(\widetilde a_{j,p})_{1\leq j\leq r,\ p\in\mathcal P},\\
			\mathbf b&=(b_{h,p})_{1\leq h\leq q,\ p\in\mathcal P},
			&\widetilde{\mathbf b}&=(\widetilde b_{h,p})_{1\leq h\leq q,\ p\in\mathcal P}.
		\end{aligned}
		\]
		We call the quadruple
		$(\mathbf a,\widetilde{\mathbf a},\mathbf b,\widetilde{\mathbf b})$
		admissible if $$\sum_{p \in \mathcal{P}} a_{j,p}=\sum_{p \in \mathcal{P}} \widetilde{a_{j,p}} = \ell_j, \qquad \sum_{p \in \mathcal{P}} b_{h,p} \leq X_h, \sum_{p \in \mathcal{P}} \widetilde{b_{h,p}}\leq X_h.$$ For an admissible sequence, we define $$A_p = \sum_{1\leq j \leq r} a_{j,p}+\sum_{1\leq h \leq q} b_{h,p} \qquad \widetilde{A_p} = \sum_{1 \leq j \leq r} \widetilde{a_{j,p}} + \sum_{1 \leq h \leq q} \widetilde{b_{h,p}}.$$ Then, if $\mathcal{Q}_K(F)$ denotes the quadratic form $$\mathcal{Q}_K(F)=\sum_{\boldsymbol{\alpha},\boldsymbol{\beta}} c_{\boldsymbol{\alpha}}\overline{c_{\boldsymbol{\beta}}}K(\boldsymbol{\alpha},\boldsymbol{\beta}),$$ we have
		\begin{align*}
			\mathcal{Q}_K(F)&={\prod_{j=1}^r (\ell_j!)^2 \sum_{\substack{(\mathbf{a},\widetilde{\mathbf{a}},\mathbf{b},\widetilde{\mathbf{b}})\\ \textup{admissible}}} \prod_{p \in \mathcal{P}} \left\{K_p(A_p,\widetilde{A_p})  \left(\prod_{j=1}^r \frac{u_j(p)^{a_{j,p}}\overline{u_j(p)}^{\widetilde{a_{j,p}}}}{a_{j,p}!\widetilde{a_{j,p}}!}  \prod_{h=1}^q \frac{v_h(p)^{b_{h,p}}\overline{v_h(p)}^{\widetilde{b_{h,p}}}}{b_{h,p}!\widetilde{b_{h,p}}!}\right)\right\}.}
		\end{align*}
	\end{lem}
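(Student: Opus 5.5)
The identity is a purely algebraic expansion, so the plan is to expand $F(\mathbf z)$ monomial by monomial and then insert the result into the quadratic form. I will use $\alpha_p,\beta_p$ for the components of the multi-indices, reserving $\boldsymbol\alpha,\boldsymbol\beta$ for the multi-indices themselves.

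\textbf{Expanding the fixed powers.} By the multinomial theorem,
\[
L_j(\mathbf z)^{\ell_j}=\ell_j!\sum_{\sum_p a_{j,p}=\ell_j}\prod_{p}\frac{u_j(p)^{a_{j,p}}z_p^{a_{j,p}}}{a_{j,p}!}.
\]

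\textbf{Expanding the truncated exponentials.} For each $h$, expand $E_{X_h}(M_h)=\sum_{m\le X_h}M_h^m/m!$ in the same way. The factor $m!$ from the multinomial theorem cancels the $1/m!$, leaving
\[
E_{X_h}(M_h(\mathbf z))=\sum_{\sum_p b_{h,p}\le X_h}\prod_p\frac{v_h(p)^{b_{h,p}}z_p^{b_{h,p}}}{b_{h,p}!}.
\]
The truncation appears only as the constraint $\sum_p b_{h,p}\le X_h$.

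\textbf{The coefficients $c_{\boldsymbol\alpha}$.} Multiplying these expansions over all $j$ and $h$ gives
\[
c_{\boldsymbol\alpha}=\prod_j\ell_j!\sum_{(\mathbf a,\mathbf b)}\prod_p\Bigl(\prod_j\frac{u_j(p)^{a_{j,p}}}{a_{j,p}!}\prod_h\frac{v_h(p)^{b_{h,p}}}{b_{h,p}!}\Bigr).
\]
Here the sum runs over pairs $(\mathbf a,\mathbf b)$ with nonnegative integer entries satisfying the row constraints $\sum_p a_{j,p}=\ell_j$ and $\sum_p b_{h,p}\le X_h$, together with the column condition
\[
\alpha_p=A_p:=\sum_j a_{j,p}+\sum_h b_{h,p}\quad\text{for every }p.
\]

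\textbf{Inserting into the quadratic form.} Write $\overline{c_{\boldsymbol\beta}}$ the same way with $(\widetilde{\mathbf a},\widetilde{\mathbf b})$ and conjugated weights. Substitute both expressions into $\mathcal Q_K(F)=\sum_{\boldsymbol\alpha,\boldsymbol\beta}c_{\boldsymbol\alpha}\overline{c_{\boldsymbol\beta}}K(\boldsymbol\alpha,\boldsymbol\beta)$ and interchange the finite sums. The outer sum over $(\boldsymbol\alpha,\boldsymbol\beta)$ is absorbed, because every pair $(\mathbf a,\mathbf b)$ determines $\boldsymbol\alpha$ uniquely through $\alpha_p=A_p$, and likewise every $(\widetilde{\mathbf a},\widetilde{\mathbf b})$ determines $\boldsymbol\beta$ through $\beta_p=\widetilde A_p$. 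What remains is a sum over admissible quadruples $(\mathbf a,\widetilde{\mathbf a},\mathbf b,\widetilde{\mathbf b})$.

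\textbf{Factorisation over primes.} The kernel factorises as $K(\boldsymbol\alpha,\boldsymbol\beta)=\prod_pK_p(A_p,\widetilde A_p)$, so it combines with the products over $p$ into the single product over $p$ appearing in the statement. The constants $\ell_j!$ from $c_{\boldsymbol\alpha}$ and $\overline{c_{\boldsymbol\beta}}$ together give the prefactor $\prod_j(\ell_j!)^2$; they are real, so conjugation does not affect them. The conjugated weights produce the factors $\overline{u_j(p)}^{\widetilde a_{j,p}}$ and $\overline{v_h(p)}^{\widetilde b_{h,p}}$.

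\textbf{Main point of care.} There is no real obstacle here; the one thing to track is the bookkeeping. Two points matter. First, the factor $m!$ for the exponential pieces cancels against $1/m!$, whereas the factor $\ell_j!$ for the fixed powers survives. Second, summing over quadruples rather than over $\boldsymbol\alpha$ involves no overcounting. This holds because the identity groups monomials by their exponent vector but never requires the map from quadruples to $(\boldsymbol\alpha,\boldsymbol\beta)$ to be injective: we sum over all preimages, which is exactly what collecting coefficients does. All sums are finite because $\mathcal P$ is finite and $\ell_j,X_h$ are finite, so the interchanges need no justification.
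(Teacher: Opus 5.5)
Your proposal is correct and is essentially the paper's proof. You expand each $L_j^{\ell_j}$ and $E_{X_h}(M_h)$ by the multinomial theorem, where $\ell_j!$ survives and the $m!$ cancels, repeat this for the conjugate copy, and insert $K=\prod_p K_p$; your added remark that the quadruple sum just collects the fibres of the map to $(\boldsymbol\alpha,\boldsymbol\beta)$ is accurate.
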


	\begin{proof}
		Expand each $L_j^{\ell_j}$ and $E_{X_h}(M_h)$ by the multinomial
		theorem.  Their coefficients are respectively
		\[
		\ell_j!\prod_p\frac{u_j(p)^{a_{j,p}}}{a_{j,p}!},
		\qquad
		\prod_p\frac{v_h(p)^{b_{h,p}}}{b_{h,p}!},
		\]
		with the stated degree restrictions.
		Applying the same expansion to the conjugate copy of $F$ and inserting
		$K=\prod_pK_p$ gives the formula.
	\end{proof}
	
	{The balanced specialization $K_p(a,b)=a!\mathbf{1}_{a=b}$ gives the following result.}
	
	\begin{lem}\label{balancedCoefficientExpansionLemma}
		Let $\mathcal{P}, \mathbf{z}, L_j(\mathbf{z}), M_h(\mathbf{z}),(X_1,\dots,X_q), (\ell_1,\dots,\ell_r)$ be as in the previous lemma. For ease of notation, write $${\mathcal{Y}_j(\mathbf z)}=\sum_{p\in \mathcal{P}} y_j(p)z_p,$$ where $\mathcal{Y}_j=L_j$ for $1\leq j \leq r$; and $\mathcal{Y}_{r+h}=M_h$ for $1\leq h \leq q$. Define $C_{a,b}=\sum_{p \in \mathcal{P}} y_{a}(p)\overline{y_{b}(p)}$. Then, if $$F(\mathbf{z})=\prod_{j=1}^r L_j^{\ell_j}(\mathbf{z})\prod_{h=1}^q E_{X_h}(M_h(\mathbf{z}))=\sum_{\boldsymbol{\alpha}} c_{\boldsymbol{\alpha}}\mathbf{z}^{\boldsymbol{\alpha}},$$ we have $$\sum_{\boldsymbol{\alpha}} \boldsymbol{\alpha}!|c_{\boldsymbol{\alpha}}|^2=\prod_{j=1}^r (\ell_j!)^2\sum_{\rho \in \mathscr{R}}\prod_{a,b=1}^{r+q} \frac{C_{a,b}^{\rho_{a,b}}}{\rho_{a,b}!},$$ where $\boldsymbol{\alpha}!=\prod_{p\in \mathcal{P}} \alpha_p!$ and {$\mathscr{R}$ is the set of non-negative integer matrices $\rho$ satisfying
			\[
			\sum_{i=1}^{r+q}\rho_{j,i}=
			\sum_{i=1}^{r+q}\rho_{i,j}=\ell_j
			\quad(1\leq j\leq r),
			\]
			and
			\[
			\sum_{i=1}^{r+q}\rho_{r+h,i}\leq X_h,
			\qquad
			\sum_{i=1}^{r+q}\rho_{i,r+h}\leq X_h
			\quad(1\leq h\leq q).
			\]}
	\end{lem}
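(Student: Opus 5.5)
The natural route is to specialize \Cref{coefficientKernelBookkeepingLemma} to the balanced kernel $K_p(a,b)=a!\mathbf 1_{a=b}$. The quadratic form then becomes $\mathcal Q_K(F)=\sum_{\boldsymbol\alpha}\boldsymbol\alpha!|c_{\boldsymbol\alpha}|^2$, which is the left-hand side. The right-hand side of \Cref{coefficientKernelBookkeepingLemma} reduces to
\[
\prod_j(\ell_j!)^2\sum_{(\mathbf a,\widetilde{\mathbf a},\mathbf b,\widetilde{\mathbf b})}\prod_{p}A_p!\,\mathbf 1_{A_p=\widetilde A_p}\prod_{a}\frac{y_a(p)^{e_{a,p}}\overline{y_a(p)}^{\widetilde e_{a,p}}}{e_{a,p}!\,\widetilde e_{a,p}!}.
\]
Here I merge the $L$- and $M$-rows by writing $e_{a,p}$ for $a_{a,p}$ when $a\le r$ and for $b_{a-r,p}$ when $a>r$, and similarly for $\widetilde e$. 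What remains is to reorganize this sum over admissible quadruples into a sum over matrices $\rho\in\mathscr R$.

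The key step is a local identity at each prime $p$. I would fix the row vectors $e_p=(e_{a,p})_a$ and $\widetilde e_p=(\widetilde e_{b,p})_b$ with common total $A_p=\widetilde A_p=N$. I then claim
\[
N!\prod_a\frac{y_a^{e_a}}{e_a!}\prod_b\frac{\overline{y_b}^{\widetilde e_b}}{\widetilde e_b!}=\sum_{\sigma}\prod_{a,b}\frac{(y_a\overline{y_b})^{\sigma_{a,b}}}{\sigma_{a,b}!},
\]
where $\sigma$ ranges over nonnegative integer matrices with row sums $e_a$ and column sums $\widetilde e_b$. The cleanest way to see this is by counting. Colour $N$ labelled balls in two ways, by $a$ with multiplicities $e$ and by $b$ with multiplicities $\widetilde e$. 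The number of pairs of colourings is $N!^2/(\prod e_a!\prod\widetilde e_b!)$. Grouping pairs by the joint colour-count matrix $\sigma$ shows that each $\sigma$ is realized $N!\cdot N!/\prod\sigma_{a,b}!$ times. Dividing by $N!$ gives the identity, since the $y$-powers match exactly because the row and column sums agree.

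Next I would insert the local identity at every $p$ and exchange the order of summation. The sum over admissible quadruples together with the local matrices $\sigma^{(p)}$ becomes a sum over families $(\sigma^{(p)}_{a,b})_p$, and the quadruple data is recovered from the margins. Setting $\rho_{a,b}=\sum_p\sigma^{(p)}_{a,b}$, the admissibility constraints become exactly the conditions defining $\mathscr R$, namely row and column sums equal to $\ell_j$ for $j\le r$ and at most $X_h$ for the truncated rows. For each fixed $\rho$, the multinomial theorem gives
\[
\sum_{\sum_p\sigma^{(p)}_{a,b}=\rho_{a,b}}\prod_p\frac{(y_a(p)\overline{y_b(p)})^{\sigma^{(p)}_{a,b}}}{\sigma^{(p)}_{a,b}!}=\frac{C_{a,b}^{\rho_{a,b}}}{\rho_{a,b}!}.
\]
Taking the product over $(a,b)$ yields the stated formula.

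I expect the main obstacle to be bookkeeping rather than depth. The point needing care is that the map from pairs (quadruple, local matrices) to families $(\sigma^{(p)})$ is a bijection. The forward direction only requires the local margin condition $A_p=\widetilde A_p$. In the converse direction, any family of local matrices determines the quadruple through its margins, and the global constraints depend only on $\rho$.
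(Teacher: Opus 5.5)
Your proposal follows the paper's proof: specialize \Cref{coefficientKernelBookkeepingLemma} to $K_p(A,B)=A!\mathbf 1_{A=B}$, apply the local transport identity $A!/(\prod_i m_i!\prod_j\widetilde m_j!)=\sum_\sigma 1/\prod_{i,j}\sigma_{ij}!$ at each prime, and recombine over primes with the multinomial theorem to get $C_{a,b}^{\rho_{a,b}}/\rho_{a,b}!$, where the margin constraints become exactly those defining $\mathscr R$.

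There is one slip in your counting proof of the local identity. A pair of colourings is the same as one colouring of the $N$ balls by pairs $(a,b)$, so each $\sigma$ is realized $N!/\prod_{a,b}\sigma_{a,b}!$ times, not $N!\cdot N!/\prod_{a,b}\sigma_{a,b}!$. With your count, dividing by $N!$ leaves a spurious factor $N!$. With the correct count, $N!^2/(\prod_a e_a!\prod_b\widetilde e_b!)=\sum_\sigma N!/\prod_{a,b}\sigma_{a,b}!$, and dividing by $N!$ gives exactly the identity you state. Equivalently, compare coefficients of $s^{e}t^{\widetilde e}$ in $(\sum_{a,b}s_at_b)^N=(\sum_a s_a)^N(\sum_b t_b)^N$; this is the multinomial-theorem step the paper invokes.
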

	
	\begin{proof}
		Apply \Cref{coefficientKernelBookkeepingLemma} with
		$K_p(A,B)=A!\mathbf1_{A=B}$.  For non-negative integers $m_i,\widetilde m_j$
		with common total $A$, the multinomial theorem gives
		\begin{equation}\label{balancedTransportIdentity}
			\frac{A!}{\prod_i m_i!\prod_j\widetilde m_j!}
			=
			\sum_{\substack{\rho_{ij}\geq0\\
					\sum_j\rho_{ij}=m_i,\ \sum_i\rho_{ij}=\widetilde m_j}}
			\frac1{\prod_{i,j}\rho_{ij}!}.
		\end{equation}
		At each prime, this expresses the local contribution as
		$\prod_{i,j}\frac{(y_i(p)\overline{y_j(p)})^{\rho^{(p)}_{ij}}}
		{\rho^{(p)}_{ij}!}$.  For $\rho_{ij}=\sum_p\rho^{(p)}_{ij}$,
		another multinomial expansion gives
		\[
		\sum_{\substack{\rho^{(p)}_{ij}\geq0\\
				\sum_p\rho^{(p)}_{ij}=\rho_{ij}}}
		\prod_p\frac{(y_i(p)\overline{y_j(p)})^{\rho^{(p)}_{ij}}}
		{\rho^{(p)}_{ij}!}
		=\frac{C_{ij}^{\rho_{ij}}}{\rho_{ij}!}.
		\]
		The global row and column sums are the total degrees in the two
		copies of $F$, hence satisfy exactly the restrictions defining
		$\mathscr R$.
	\end{proof}
	
	\begin{lem}\label{unbalancedTransportLemma}
		Let $\mathcal{I}$ be a finite index set. For each $i\in \mathcal{I}$, we are given non-negative integers $m_i, \widetilde{m_i}$ and complex numbers $y_i$. Let $A=\sum_{i \in \mathcal{I}} m_i$ and $\widetilde{A}=\sum_{i \in \mathcal{I}} \widetilde{m_i}$. 
		
		Suppose $A>\widetilde{A}$ and let $d=A-\widetilde{A}$. Then $$\frac{\prod_{i \in \mathcal{I}} y_i^{m_i}\overline{y_i}^{\widetilde{m_i}}}{\prod_{i \in \mathcal{I}} m_i!\widetilde{m_i}!}=\frac{d!}{A!}\sum_{\substack{(\eta_i)_{i \in \mathcal{I}}, \\ 0\leq \eta_i\leq m_i, \\ \sum_{i \in \mathcal{I}} \eta_i = d}} \sum_{\substack{(\rho_{i,j})_{i,j \in \mathcal{I}} \geq 0, \\ \sum_{j\in \mathcal{I}} \rho_{i,j}=m_i-\eta_i, \\ \sum_{i \in \mathcal{I}} \rho_{i,j} = \widetilde{m_j} }} \left(\prod_{i \in \mathcal{I}} \frac{y_i^{\eta_i}}{\eta_i!}\right) \prod_{i,j \in \mathcal{I}} \frac{(y_i\overline{y_j})^{\rho_{i,j}}}{\rho_{i,j}!}.$$
		
		In the case $\widetilde{A}>A$ the same conclusion holds {after interchanging the two sides}: $d=\widetilde{A}-A$; and $$\frac{\prod_{i \in \mathcal{I}} y_i^{m_i}\overline{y_i}^{\widetilde{m_i}}}{\prod_{i \in \mathcal{I}} m_i!\widetilde{m_i}!} = \frac{d!}{\widetilde{A}!} \sum_{\substack{(\widetilde{\eta_j})_{j \in \mathcal{I}}, \\ 0\leq \widetilde{\eta_j}\leq \widetilde{m_j}, \\ \sum_{j \in \mathcal{I}} \widetilde{\eta_j} = d}} \sum_{\substack{(\rho_{i,j})_{i,j \in \mathcal{I}} \geq 0, \\ \sum_{j\in \mathcal{I}} \rho_{i,j}=m_i, \\ \sum_{i \in \mathcal{I}} \rho_{i,j} = \widetilde{m_j} - \widetilde{\eta_j} }} \left(\prod_{j \in \mathcal{I}} \frac{\overline{y_j}^{\widetilde{\eta_j}}}{\widetilde{\eta_j}!}\right) \prod_{i,j \in \mathcal{I}} \frac{(y_i\overline{y_j})^{\rho_{i,j}}}{\rho_{i,j}!}.$$
	\end{lem}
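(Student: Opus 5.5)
We prove the case $A>\widetilde A$; the case $\widetilde A>A$ then follows by exchanging the two sides, i.e.\ replacing $y_i$ by $\overline{y_i}$ and swapping $m_i$ with $\widetilde{m_i}$. The idea is to reduce the unbalanced identity to the balanced transport identity \eqref{balancedTransportIdentity}. We do this by adjoining a fictitious index $\star$ to the right-hand side, which absorbs the $d$ excess units of the left-hand degree. Put $\widetilde m_\star=d$ and $\overline{y_\star}=1$. Then the right-hand totals are $\widetilde A+d=A$, so the balanced identity, applied with row data $(m_i)_{i\in\mathcal I}$ and column data $(\widetilde m_j)_{j\in\mathcal I}\cup\{\widetilde m_\star\}$, gives
\[
\frac{A!}{\prod_i m_i!\,\prod_j\widetilde m_j!\,d!}
=\sum_{\rho}\frac{1}{\prod_{i,j}\rho_{i,j}!\,\prod_i\rho_{i,\star}!}.
\]
The sum runs over non-negative arrays $(\rho_{i,j})_{i\in\mathcal I,\,j\in\mathcal I\cup\{\star\}}$ with row sums $m_i$ and column sums $\widetilde m_j$ and $d$. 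We then rename $\eta_i:=\rho_{i,\star}$. The constraints become exactly $0\leq\eta_i\leq m_i$, $\sum_i\eta_i=d$, $\sum_j\rho_{i,j}=m_i-\eta_i$, and $\sum_i\rho_{i,j}=\widetilde m_j$, matching the summation conditions in the statement.

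Next we multiply by the monomial. For any admissible $\rho,\eta$,
\[
\prod_i y_i^{m_i}\overline{y_i}^{\widetilde m_i}
=\prod_i y_i^{\eta_i}\prod_{i,j}(y_i\overline{y_j})^{\rho_{i,j}}.
\]
Indeed, the exponent of $y_i$ on the right is $\eta_i+\sum_j\rho_{i,j}=m_i$, and that of $\overline{y_j}$ is $\sum_i\rho_{i,j}=\widetilde m_j$. Since this monomial is the same for every term of the sum, we may multiply the displayed identity by it term by term. Finally we multiply through by $d!/A!$, which yields exactly the asserted formula.

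The only delicate point is that the balanced identity must be used in its rectangular form, with index sets of different sizes on the two sides, whereas \eqref{balancedTransportIdentity} was written with a common index set. If needed, we would re-derive it directly. Expanding $(\sum_{j}x_j)^{m_i}$ multinomially for each $i$ and extracting the coefficient of $\prod_j x_j^{\widetilde m_j}$ on both sides of $\prod_i(\sum_j x_j)^{m_i}=(\sum_j x_j)^{A}$ gives the identity immediately for arbitrary finite row and column index sets. This step is routine, so we expect no real obstacle. The only care needed is the bookkeeping of the auxiliary column $\star$ and the check that the constraint $\eta_i\le m_i$ is automatic from non-negativity of the $\rho_{i,j}$.
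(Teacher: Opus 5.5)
Your proof is correct, and it differs from the paper's only in how the $\boldsymbol\eta$-sum is handled.

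The paper fixes $\boldsymbol\eta$ and applies \eqref{balancedTransportIdentity} to the residual row sums $m_i-\eta_i$ against the column sums $\widetilde m_j$, both totalling $\widetilde A$. This gives $\widetilde A!/\prod_i(m_i-\eta_i)!\prod_j\widetilde m_j!$. The paper then sums over $\boldsymbol\eta$ using Vandermonde's convolution $\sum_{\boldsymbol\eta}\prod_i\binom{m_i}{\eta_i}=\binom{A}{d}$.

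You instead adjoin a sink column $\star$ of mass $d$. Then $\eta_i=\rho_{i,\star}$ becomes part of a single transport array, and one application of the rectangular transport identity produces the whole double sum at once. Vandermonde's identity is absorbed, since it is itself the two-column case (masses $d$ and $\widetilde A$) of that identity. The monomial bookkeeping is the same in both proofs.

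Your route is slightly more economical and shows the unbalanced identity is just the balanced one with an auxiliary column. The paper's route uses the transport identity only with a common index set on both sides, at the cost of the extra Vandermonde step. Your worry about the rectangular form is harmless in any case. Besides your direct multinomial derivation, you could pad with a dummy row $\star$ having $m_\star=0$, which forces $\rho_{\star,j}=0$, so that the index sets agree.

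One small point for the case $\widetilde A>A$: the reduction via $y_i\mapsto\overline{y_i}$, $m_i\leftrightarrow\widetilde m_i$ also requires transposing the transport array, $\rho_{i,j}\mapsto\rho_{j,i}$, to match the stated constraints. This is a trivial relabeling, and the paper leaves it equally implicit.
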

	
	\begin{proof}
		Suppose $A>\widetilde A$.  For each admissible $\boldsymbol\eta$,
		apply \eqref{balancedTransportIdentity} to the row sums
		$m_i-\eta_i$ and column sums $\widetilde m_j$.
		Vandermonde's identity gives
		\[
		\sum_{\substack{0\leq\eta_i\leq m_i\\\sum_i\eta_i=d}}
		\frac1{\prod_i\eta_i!(m_i-\eta_i)!}
		=\frac1{\prod_i m_i!}\binom Ad
		=\frac{A!}{d!\widetilde A!\prod_i m_i!}.
		\]
		The double sum of reciprocal factorials is therefore
		\[
		\frac{\widetilde A!}{\prod_j\widetilde m_j!}
		\frac{A!}{d!\widetilde A!\prod_i m_i!}
		=\frac{A!}{d!\prod_i m_i!\widetilde m_i!}.
		\]
		Each admissible term has the same monomial, since
		$m_i=\eta_i+\sum_j\rho_{ij}$ and
		$\widetilde m_j=\sum_i\rho_{ij}$.
		Multiplication by $d!/A!$ proves the identity.
		Interchanging the two sides proves the other case.
	\end{proof}
	
	{\section{Proofs of the Dirichlet-polynomial kernel estimates}\label{appendixDirichletPolynomialKernels}}
	{We prove the kernel estimates from
		Section~\ref{twistedMomentsSection}.}
	{\subsection{The untwisted product estimate}}
	\begin{proof}[Proof of \Cref{highPowersAndSomeSquaresUntwistedLemma}]
		The length assumption lets us apply Montgomery--Vaughan, so upon utilizing the trivial bound $$\sum_{\boldsymbol{\alpha}} |c_{\boldsymbol{\alpha}}|^2 \leq \sum_{\boldsymbol{\alpha}} \boldsymbol{\alpha}!|c_{\boldsymbol{\alpha}}|^2,$$ \Cref{balancedCoefficientExpansionLemma} gives {immediately} that $$\int_{T}^{2T} \prod_{j=1}^\mu \left|\mathcal{P}_j(\tfrac12+it)\right|^{2\ell_j} \prod_{j=1}^{\nu} |\mathcal{N}_j(\tfrac12+it)|^2 \dt \ll T \prod_{j=1}^{\mu} (\ell_j!)^2\sum_{\rho \in \mathscr{R}} \prod_{a,b=1}^{\mu+\nu} \frac{V_{a,b}^{\rho_{a,b}}}{\rho_{a,b}!}.$$ 
		
		Fix a transport matrix $\rho$ in the sum above. For $1\leq j \leq \mu$ let $m_j=\ell_j - \rho_{jj}$. By the admissibility condition produced in \Cref{balancedCoefficientExpansionLemma}, we have
		\begin{equation}\label{mjAdmissibilityCondition}
			m_j = \sum_{1\leq i \leq r, i \neq j} \rho_{i,j} = \sum_{1\leq i \leq r, i \neq j} \rho_{j,i}.
		\end{equation}
		
		By the definition of $m_j$, we have
		\begin{align*}
			\frac{\prod_{i=1}^{\mu} (\ell_i!)^2}{\prod_{i,j} \rho_{i,j}!} \prod_{i,j} V_{i,j}^{\rho_{i,j}} &= \left(\prod_{i=1}^\mu \ell_i!\right) \left(\prod_{i=1}^\mu \frac{\ell_i!V_i^{\rho_{i,i}}}{\rho_{ii}!} \right)\left( \prod_{i=\mu+1}^r \frac{V_i^{\rho_{i,i}}}{\rho_{ii}!}\right) \left(\prod_{1\leq i,j\leq r, i\neq j} \frac{V_{i,j}^{\rho_{i,j}}}{\rho_{i,j}!}\right) \\
			&=\left(\prod_{i=1}^\mu \ell_i!\right) \left(\prod_{i=1}^{\mu} V_{i}^{\ell_i} \frac{\ell_i!}{(\ell_i-m_i)!V_i^{m_i}}\right)\left( \prod_{i=\mu+1}^r \frac{V_i^{\rho_{i,i}}}{\rho_{ii}!}\right) \left(\prod_{1\leq i,j\leq r, i\neq j} \frac{V_{i,j}^{\rho_{i,j}}}{\rho_{i,j}!}\right) \\
			&\ll  \left(\prod_{i=1}^\mu \ell_i!V_i^{\ell_i}\right) \left(\prod_{i=1}^{\mu} \left(\frac{\ell_i}{V_i}\right)^{m_i} \right) \left( \prod_{i=\mu+1}^r \frac{V_i^{\rho_{i,i}}}{\rho_{ii}!}\right) \left(\prod_{1\leq i,j\leq r, i\neq j} \frac{|V_{i,j}|^{\rho_{i,j}}}{\rho_{i,j}!}\right),
		\end{align*}
		where we used the trivial bound $\frac{(\ell_i)!}{(\ell_i-m_i)!} \leq \ell_i^{m_i}$. 
		
		By (\ref{mjAdmissibilityCondition}) we can rewrite one of the factors as follows $$\prod_{i=1}^{\mu} \left(\frac{\ell_i}{V_i}\right)^{m_i} = \prod_{i=1}^{\mu} \left(\frac{\ell_i}{V_i}\right)^{\sum_{1\leq j \leq r, i \neq j} \rho_{i,j}}=\prod_{i=1}^{\mu} \prod_{1\leq j \leq r, i\neq j} \left(\frac{\ell_i}{V_i}\right)^{\rho_{i,j}}.$$ Thus
		\begin{align*}
			&\prod_{i=1}^{\mu} \left(\frac{\ell_i}{V_i}\right)^{m_i}
			\left(\prod_{\substack{1\leq i,j\leq r\\i\neq j}}
			\frac{|V_{i,j}|^{\rho_{i,j}}}{\rho_{i,j}!}\right)\\
			&\quad=\left(\prod_{1\leq i \leq \mu}
			\prod_{\substack{1\leq j \leq r\\j\neq i}}
			\left(\frac{\ell_i}{V_i}\right)^{\rho_{i,j}}
			\frac{|V_{i,j}|^{\rho_{i,j}}}{\rho_{i,j}!}\right)
			\left(\prod_{\substack{\mu+1\leq i \leq r,\\ 1\leq j \leq r, j\neq i}}\frac{|V_{i,j}|^{\rho_{i,j}}}{\rho_{i,j}!}\right) \\
			&= \left(\prod_{1\leq i \leq \mu} \prod_{1\leq j \leq r, i \neq j} \frac{1}{\rho_{i,j}!}\left(\frac{\ell_i|V_{i,j}|}{V_i}\right)^{\rho_{i,j}} \right)\left(\prod_{\substack{\mu+1\leq i \leq r,\\ 1\leq j \leq r, j\neq i}}\frac{|V_{i,j}|^{\rho_{i,j}}}{\rho_{i,j}!}\right).
		\end{align*}
		
		With this algebraic manipulation and upper bound in place, we are ready to sum $$\frac{\prod_{i=1}^{\mu} (\ell_i!)^2}{\prod_{i,j} \rho_{i,j}!} \prod_{i,j} V_{i,j}^{\rho_{i,j}}$$ over all admissible matrices $\rho$. Observe that {the resulting product} has a form where the factors are formed as products of terms coming from the Taylor series of $\exp$. Indeed, for $1\leq i\leq \mu$, the diagonal entry $\rho_{ii}$ is determined by the off-diagonal entries through
		$$
		\rho_{ii} = \ell_i-\sum_{\substack{1\leq j\leq r\\ j\neq i}}\rho_{i,j}.$$
		After taking absolute values, all terms in our upper bound are non-negative. We may therefore drop the remaining admissibility conditions and extend the ranges of all off-diagonal entries $\rho_{i,j}$, $i\neq j$, and the diagonal entries $\rho_{i,i}$, $i>\mu$, independently to all non-negative integers. The resulting sums separate and are Taylor series for the exponential.
		$$\sum_{\rho \in \mathscr{R}}\left|\frac{\prod_{i=1}^{\mu} (\ell_i!)^2}{\prod_{i,j} \rho_{i,j}!} \prod_{i,j} V_{i,j}^{\rho_{i,j}}\right| \ll \left(\prod_{i=1}^{\mu} \ell_i! V_i^{\ell_i}\right)\exp\left(\sum_{i=\mu+1}^r V_i + \sum_{\substack{1\leq i \leq \mu, \\ 1 \leq j \leq r, i \neq j}} \frac{\ell_i}{V_i}|{V_{i,j}}| + \sum_{\substack{\mu+1 \leq i \leq r, \\ 1 \leq j \leq r, i \neq j}} |{V_{i,j}}|\right)$$
		Using that $\ell_{i} \ll V_{i}$ and ${V_{i,j}}=O(1)$ for $i\neq j$, we conclude that $$\sum_{\rho\in \mathscr{R}}\frac{\prod_{i=1}^{\mu} (\ell_i!)^2}{\prod_{i,j} \rho_{i,j}!} \prod_{i,j} V_{i,j}^{\rho_{i,j}} \ll_r \left(\prod_{j=1}^\mu \ell_j! V_j^{\ell_j}\right)\left( \prod_{j=1}^{\nu} \exp\left( V_{j+\mu}\right)\right),$$ which is what we wanted to prove.
	\end{proof}
	
	{\subsection{Verification of the local kernels}}
	\begin{proof}[Proof of \Cref{verificationOfTwistedLocalKernels}]
		{At $p$, the factor $\sqrt{nm}/[n,m]$ is
			$p^{-|A-B|/2}$.  Combining this identity with the definitions of
			$B,B^\chi$ proves \eqref{zetaFourthLocalKernel} and
			\eqref{characterFourthLocalKernel}.  Combining it with
			\eqref{normalizedCuspLocalFactor} proves
			\eqref{cuspSecondLocalKernel}.  These
			formulae also show immediately that the kernels depend only on $A-B$.
			
			Expanding the two geometric series in the definition of $B$ gives,
			uniformly for the present shifts,
			\[
			B_{\mathbf z,p^d},\ B^\chi_{\mathbf z,p^d}
			\ll(d+1)C^d,
			\]
			and keeping the terms of total local degree one gives
			\eqref{zetaLinearLocalTerms}--\eqref{characterLinearLocalTerms}.
			Equations \eqref{cuspPlusKappa} and
			\eqref{cuspMinusKappa} follow directly from the displayed Euler
			factors.  Deligne's bound (and the Deligne--Serre bound when
			$\kappa=1$) $|\lambda_f(p^d)|\leq d+1$, together with
			$|p^{O(1/\log T)}|\leq e^{O(c)}$, proves
			\eqref{verifiedKappaBound} in all three cases.
			
			For a twist by $L_i$, consecutive prime blocks, and coefficients
			$b_a(p)=\alpha_a\lambda_a(p)/2$ with $\alpha_a\in\mathbb R$,
			partial summation gives
			\begin{align}
				\left|\sum_hW_{a,h}^+\right|
				+\left|\sum_hW_{a,h}^-\right|
				&\ll1 &&(a\neq i),\label{verifiedNondistinguishedW}\\
				\Re\sum_h(W_{i,h}^++W_{i,h}^-)
				&=\frac{u\alpha_i}{2}
				\sum_{p\ \mathrm{in\ the\ blocks}}
				\frac{|\lambda_i(p)|^2}{p}+O(1),
				\label{verifiedDistinguishedW}
			\end{align}
			where $u=4$ for a fourth-moment kernel and $u=2$ for the cusp-form
			second-moment kernel.  Indeed replacing a factor $p^{-z_j}$ or
			$p^{-w/2}$ by $1$ changes the complete prime sum by $O(1)$, uniformly
			for $|z_j|\ll1/\log T$; this is another direct partial-summation
			consequence of
			\eqref{rankinSelbergQuantitativePrimeTheorem} and
			\eqref{rankinSelbergQuantitativeMertens}.}
	\end{proof}
	
	{\subsection{The single-block and multiblock kernel estimates}}
	{We next derive a twisted analogue of
		\Cref{highPowersAndSomeSquaresUntwistedLemma}.  The formulation
		applies whenever the available twisted-moment kernel satisfies the
		stated Ramanujan-type coefficient bounds.}
	
	{The following lemma separates the non-summable local configurations from the absolutely convergent remainder and reorganizes the former globally.}
	
	\begin{lem}\label{twistedCombinatorialKernelLemma}
		Let $\mathcal{P}, r, \mu, \nu, \ell_j, X_j, \mathcal{N}_j, V_{i,j}$ and $V_j$ be as defined in \Cref{highPowersAndSomeSquaresUntwistedLemma}. Let $$\mathcal{A}(s)=\sum_{n} \frac{a(n)}{n^s} =\prod_{j=1}^\mu (\mathcal{P}_j(s))^{\ell_j} \prod_{j=1}^{\nu} \mathcal{N}_j(s)$$ 
		Assume that the coefficient $b_j(p)$ in the definition of $\mathcal{P}_j(s)$ satisfies $$b_j(p)\ll p^{\theta}$$ for some $0\leq \theta < 1/4$.
		Assume that ${V_{i,j}}=O(1)$ for $i\neq j$, and that $\ell_{i} \ll V_i$ for $1\leq i \leq \mu$. Let $K_p:\mathbb{Z}_{\geq 0} \times \mathbb{Z}_{\geq 0} \to \mathbb{C}$ be a kernel defined for all $p \in \mathcal{P}$ that satisfies:
		\begin{enumerate}
			\item $K_p(a,a)=1$ for $a\geq 0$ for all $p\in \mathcal{P}$.
			\item There exist quantities $\kappa_{p,d}^+$ and $\kappa_{p,d}^-$ independent of $a$ that satisfy $$K_p(a+d,a)=\kappa_{p,d}^+, \qquad K_p(a,a+d)=\kappa_{p,d}^-$$ when $d\geq 1$.
			\item There exists an absolute constant $C>0$ such that the parameters $\kappa$ satisfy $$|\kappa^{+}_{p,d}|+|\kappa^{-}_{p,d}|\ll (d+1)C^d p^{-d/2+\theta}$$
		\end{enumerate}
		We define the global kernel $K$ by $$K(n,m)=\prod_{p \in \mathcal{P}} K_p(v_p(n),v_p(m)),$$ where $v_p(n)$ is the $p$-adic valuation of $n$. 
		
		To the $\kappa$-parameters we associate a family of weighted sums $$W_j^+= \sum_{p \in \mathcal{P}} \frac{b_j(p)}{\sqrt{p}}\kappa^+_{p,1}, \qquad W_j^- = \sum_{p \in \mathcal{P}} \frac{\overline{b_j(p)}}{\sqrt{p}}\kappa^-_{p,1},$$ for which we assume that, apart from a distinguished index $j_0$, $$W_j^+ = O(1), W_j^- = O(1) \textup{ for } j\neq j_0. $$ The $j_0$-terms are not assumed to be bounded. Finally, we assume that the cut-off parameters $X_j$ satisfy $$X_j \geq C_1(V_{\mu+j}+|W^+_{\mu+j}| + |W^-_{\mu+j}|+1)$$ for $1\leq j \leq \nu$ for some sufficiently large constant $C_1$. 
		Then the following bounds hold.
		
		\begin{enumerate}
			\item If $j_0>\mu$ we have $$\left|\sum_{n,m} \frac{a(n)\overline{a(m)}}{\sqrt{nm}}K(n,m)\right| \ll \left(\prod_{i=1}^{\mu} \ell_i! V_i^{\ell_i}\right)\exp \left(\sum_{h=1}^{\nu} V_{\mu+h}\right)\exp(\textup{Re}\left(W_{j_0}^- + W_{j_0}^+\right)).$$
			\item If $j_0 \leq \mu$, assume in addition that $|W^+_{j_0}|\asymp V_{j_0}$ and $|W^{-}_{j_0}|\asymp V_{j_0}$. Then we have $$\left|\sum_{n,m} \frac{a(n)\overline{a(m)}}{\sqrt{nm}}K(n,m)\right| \ll \left(\prod_{i=1, i \neq j_0}^{\mu} \ell_i! V_i^{\ell_i}\right)\exp \left(\sum_{h=1}^{\nu} V_{\mu+h}\right) \mathcal{H}_{\ell_{j_0}}(V_{j_0}; W_{j_0}^+, W_{j_0}^-),$$ where $$\mathcal{H}_{\ell}(V;W^+,W^-)=(\ell!)^2\sum_{j=0}^{\ell} \frac{V^{\ell-j}|W^+W^-|^j}{(\ell-j)!(j!)^2}.$$
		\end{enumerate}
	\end{lem}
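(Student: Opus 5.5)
The plan is to expand the quadratic form through \Cref{coefficientKernelBookkeepingLemma}, with the kernel $K=\prod_pK_p$ and the normalisation $a(n)/\sqrt n$. Each prime then carries the arrays $a_{j,p},\widetilde a_{j,p}$ (for the fixed powers) and $b_{h,p},\widetilde b_{h,p}$ (for the truncations), weighted by $K_p(A_p,\widetilde A_p)\,p^{-(A_p+\widetilde A_p)/2}$. Since the kernel values depend only on $A_p-\widetilde A_p$, I would split the primes into two classes. The \emph{balanced} primes have $A_p=\widetilde A_p$, where $K_p=1$. The \emph{unbalanced} primes have $A_p\neq\widetilde A_p$, where the weight is $\kappa^\pm_{p,d}$.

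For unbalanced primes with $d=1$ and local degree $A_p+\widetilde A_p=1$, the contribution is exactly a term of $W_j^\pm$. Every other unbalanced configuration has total local weight $\ll C^{A_p+\widetilde A_p}p^{-(A_p+\widetilde A_p)/2-d/2+2\theta}$. Because $\theta<1/4$ and the total local degree is at least $2$ with $d\ge1$ (or at least $3$), this sum over primes converges absolutely, even after weighting by $C^{\text{degree}}$. Hence those primes contribute at most a factor $O(1)$ once I bound the multiplicity of coefficients using $b_j(p)\ll p^\theta$.

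At the balanced primes, and in the degree-two parts of the unbalanced ones, I apply \Cref{unbalancedTransportLemma}. This rewrites each local monomial as a transport matrix $\rho^{(p)}$ plus an excess vector $\eta^{(p)}$ or $\widetilde\eta^{(p)}$. The global resummation over primes, exactly as in the proof of \Cref{balancedCoefficientExpansionLemma}, turns $\rho$ into global entries $V_{i,j}^{\rho_{i,j}}/\rho_{i,j}!$ and turns the $\eta$'s into $(W_j^\pm)^{\eta_j}/\eta_j!$.

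The row and column constraints now read as follows. For a fixed power $j$, the row total $\sum_i\rho_{j,i}+\eta_j$ and the column total $\sum_i\rho_{i,j}+\widetilde\eta_j$ both equal $\ell_j$. For a truncation, the corresponding totals are at most $X_j$.

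The bookkeeping then proceeds by type.
\begin{itemize}
\item For $j\ne j_0$, every term $W_j^\pm=O(1)$. I argue exactly as in the proof of \Cref{highPowersAndSomeSquaresUntwistedLemma}: use $\ell_i!/(\ell_i-m_i)!\le\ell_i^{m_i}$ with $\ell_i\ll V_i$, and extend all off-diagonal and excess sums to exponentials. This gives $\prod\ell_i!V_i^{\ell_i}\exp(\sum V_{\mu+h})$ times $O(1)$.
\item The truncations $X_j$ are handled by dropping the constraint $\sum\le X_j$ and bounding the discarded terms. Because $X_j\ge C_1(V+|W^\pm|+1)$, the tail of the corresponding exponential series is negligible: it is at most $e^{-X_j}$ times the main term, by Stirling.
\item For $j_0>\mu$ (a truncation), the excess sums $\sum_{\eta}(W^+)^{\eta}/\eta!$ and $\sum_{\widetilde\eta}(W^-)^{\widetilde\eta}/\widetilde\eta!$ combine with the diagonal sum. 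Here I must keep the phase and not take absolute values. The diagonal and excess parts factor as the Taylor series of $\exp(V_{j_0}+W^+_{j_0}+W^-_{j_0})$, up to the truncation. Taking the modulus gives $\exp(\Re(W^++W^-))$.
\item For $j_0\le\mu$ (a fixed power), the row constraint forces $\eta_{j_0}=\widetilde\eta_{j_0}=k$, up to a bounded off-diagonal perturbation. The $j_0$-sum is then $(\ell!)^2\sum_k V^{\ell-k}|W^+W^-|^k/((\ell-k)!(k!)^2)$, which is $\mathcal H_\ell$. The off-diagonal entries cost only factors $(\ell/V)^{\rho}|V_{i,j}|^\rho$ or $(\ell/|W|)^{\eta}$, which are bounded by exponentials of $O(1)$ using $|W^\pm_{j_0}|\asymp V_{j_0}$.
\end{itemize}

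I expect the main obstacle to be the last step: case (1), where absolute values cannot be taken. The truncated exponential $E_{X}$ does not factor exactly, and the cross terms with unbalanced $d\ge2$ primes interact with the excess vector. To handle this I would first prove the identity without the truncation constraint, where the full generating function factorises into $\exp$ of linear and quadratic prime sums. I would then bound the difference between the truncated and untruncated versions in absolute value, via a Cauchy-contour argument on $E_X(zM)$ at $|z|=R>1$, as in the tail bound \eqref{HBLabelledTailBound}. With $X_j\ge C_1S_j$, that difference is smaller than the main term.
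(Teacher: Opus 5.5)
Your architecture is the paper's. You expand with \Cref{coefficientKernelBookkeepingLemma}, isolate the local types $(A_p,\widetilde A_p)=(1,1),(1,0),(0,1)$ as the only non-summable ones, and exponentiate them into $V_{i,j}$ and $W_j^\pm$. You control the fixed powers by the ratios $\ell!/((\ell-a)!V^a)\le(\ell/V)^a$ and $(\ell/|W^\pm|)^a$, and restore the truncations by Cauchy's formula on circles of radius $R>1$.

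\emph{The gap is the local-to-global passage, which fails as you state it.} At a balanced prime the kernel is $K_p(A,A)=1$, not $A!$ as in the specialization behind \Cref{balancedCoefficientExpansionLemma}. By the transport identity the local term is therefore $\frac1{A_p!}\sum_{\rho^{(p)}}\prod_{i,j}(y_i\overline{y_j})^{\rho^{(p)}_{ij}}/\rho^{(p)}_{ij}!$. Once some $A_p\ge2$, the resummation over primes does not produce $\prod_{i,j}V_{i,j}^{\rho_{i,j}}/\rho_{i,j}!$. Similarly, the global factor $(W_j^+)^{\eta_j}/\eta_j!$ gives two excess units at one prime the weight $(\kappa^+_{p,1})^2/2$, whereas the true weight involves $\kappa^+_{p,2}$. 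So the main term is not an exact resummation of the configurations you call regular, and ``those primes contribute a factor $O(1)$'' has no definite meaning, since the global degree constraints couple all primes.

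You also cannot fall back on absolute values, as the untwisted lemma does via $\sum|c_{\boldsymbol\alpha}|^2\le\sum\boldsymbol\alpha!|c_{\boldsymbol\alpha}|^2$. The twisted form has no positivity. Only the global sums $V_{i,j}$ with $i\ne j$ are assumed bounded, while $\sum_p|b_i(p)b_j(p)|/p$ is of order $\log\log T$ in the applications. And the phase of $W^\pm_{j_0}$ must survive in case (1).

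The fix, which is what the paper does, is an exact completion of the Euler product. Give each index degree variables $z_i,w_i$ and write the local series as $\mathfrak F_p=1+\mathfrak L_p+\mathfrak E_p$, with $\mathfrak L_p$ the three non-summable types. Put every other configuration into $\mathfrak E_p$, including the balanced ones with $A_p\ge2$, which are missing from your list of summable configurations. Then, exactly,
$\prod_p\mathfrak F_p=\mathfrak H\exp\bigl(\sum_{i,j}V_{i,j}z_iw_j+\sum_iW_i^+z_i+\sum_iW_i^-w_i\bigr)$, where $\mathfrak H=\prod_p\mathfrak F_pe^{-\mathfrak L_p}$. The degree-weighted bound $\|\mathfrak E_p\|_R+\|\mathfrak L_p\|_R^2\ll p^{-2+4\theta}$ gives $\|\mathfrak H\|_R\ll1$. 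This is the precise sense in which the remainder is a bounded factor. With it in place, your fixed-degree comparisons and the Cauchy restoration of the truncations go through as you describe. The factorization in your last paragraph is correct only once $\mathfrak H$ is included.

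Separately, your local exponent undercounts the $\theta$-losses. Each coefficient costs $p^\theta$, so the correct bound is $p^{-(A_p+\widetilde A_p)(1/2-\theta)-d/2+\theta}$. Its worst cases, degree three with $d=1$ and balanced degree four, are $p^{-2+4\theta}$. This is exactly where $\theta<1/4$ is needed, whereas your exponent would suggest $\theta<1/2$ suffices.
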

	
	\begin{rem}
		We note the bound for $\mathcal{H}_{\ell}(V;W^+,W^-)$. Indeed, using the inequalities $\frac{\ell!}{(\ell-j)!}\leq \ell^j$ and  $$\sum_{j\geq 0} \frac{x^j}{(j!)^2}\leq \sum_{j\geq 0} \frac{4^jx^j}{(2j)!} \leq \sum_{j \geq 0} \frac{(2\sqrt{x})^{2j}}{(2j)!}\leq e^{2\sqrt{x}}$$ for $x\geq 0$, we get that $$(\ell!)^2\sum_{j=0}^{\ell} \frac{V^{\ell-j}|W^+W^-|^j}{(\ell-j)!(j!)^2}\leq \ell! V^{\ell} \sum_{j=0}^{\ell} \frac{1}{(j!)^2}\left(\ell\frac{|W^+W^-|}{V}\right)^j \leq \ell! V^{\ell}\exp\left(2 \sqrt{\ell\frac{|W^+W^-|}{V}}\right).$$
	\end{rem}
	
	\begin{proof}
		{The proof follows
			\Cref{highPowersAndSomeSquaresUntwistedLemma}, with the additional
			treatment of the unbalanced configurations
			$A_p\ne\widetilde A_p$.}  As before, for each $p\in \mathcal{P}$, we have sequences of non-negative integers $m_{1,p},\dots,m_{r,p}$ and $\widetilde{m_{1,p}},\dots,\widetilde{m_{r,p}}$. We say that $((m_{j,p}), (\widetilde{m_{j,p}}))$ forms an admissible pair if
		\begin{enumerate}
			\item $\sum_{p \in \mathcal{P}} m_{j,p}=\sum_{p \in \mathcal{P}} \widetilde{m_{j,p}} = \ell_j \text{ for } 1\leq j \leq \mu$ and 
			\item $\sum_{p \in \mathcal{P}} m_{\mu+h,p}, \sum_{p \in \mathcal{P}} \widetilde{m_{\mu+h,p}} \leq X_h \text{ for } 1 \leq h \leq \nu$
		\end{enumerate}
		{As above, put} $A_p = \sum_{j} m_{j,p}$ and $\widetilde{A_p} = \sum_j \widetilde{m_{j,p}}$. Applying \Cref{coefficientKernelBookkeepingLemma} with $u_j(p)=b_j(p)p^{-1/2}$ for $1\leq j \leq \mu$ and $v_j(p)=b_{\mu+j}(p)p^{-1/2}$ for $1\leq j \leq \nu$, we get 
		$$\sum_{n,m} \frac{a(n)\overline{a(m)}}{\sqrt{nm}}K(n,m) = \prod_{j=1}^{\mu} (\ell_j!)^2 \sum_{\substack{((m_{j,p}), (\widetilde{m_{j,p}})) \\ \textup{admissible}}} \prod_{p \in \mathcal{P}} K_p(A_p,\widetilde{A_p})\left(\prod_{j=1}^r \frac{b_j(p)^{m_{j,p}}\overline{b_j(p)}^{\widetilde{m_{j,p}}}}{p^{(m_{j,p}+\widetilde{m_{j,p}})/2}m_{j,p}!\widetilde{m_{j,p}}!}\right).$$
		We estimate the local contributions.
		
		Put
		\[
		B_p=\sum_{i=1}^r\frac{|b_i(p)|}{\sqrt p}\ll p^{-1/2+\theta},
		\qquad
		D_p=\sum_{i,j=1}^r\frac{|b_i(p)b_j(p)|}{p}
		=B_p^2\ll p^{-1+2\theta}.
		\]
		The configurations $(A_p,\widetilde A_p)=(1,1),(1,0),(0,1)$ contribute
		respectively
		\[
		\frac{b_i(p)\overline{b_j(p)}}p,\qquad
		\kappa_{p,1}^+\frac{b_i(p)}{\sqrt p},\qquad
		\kappa_{p,1}^-\frac{\overline{b_i(p)}}{\sqrt p}.
		\]
		Their prime sums are $V_{i,j}$ and $W_i^\pm$.
		Apart from the constant term, all remaining configurations are absolutely summable:
		
		If $A_p=\widetilde A_p=A\geq2$, \eqref{balancedTransportIdentity}
		and the multinomial theorem give the total bound
		\[
		\sum_{A\geq2}\frac{D_p^A}{(A!)^2}\ll p^{-2+4\theta}.
		\]
		If $|A_p-\widetilde A_p|=1$ with both degrees positive,
		\Cref{unbalancedTransportLemma} leaves one unpaired occurrence and
		a non-empty transport matrix.  Summing both gives
		\[
		(|\kappa_{p,1}^+|+|\kappa_{p,1}^-|)B_pD_pe^{D_p}
		\ll p^{-2+4\theta}.
		\]
		Finally, for $d=|A_p-\widetilde A_p|\geq2$, the unpaired degrees sum
		to $B_p^d/d!$, the transport matrices to at most $e^{D_p}$, and
		$d!/\max(A_p,\widetilde A_p)!\leq1$.  Thus
		\[
		e^{D_p}\sum_{d\geq2}
		\frac{|\kappa_{p,d}^+|+|\kappa_{p,d}^-|}{d!}B_p^d
		\ll\sum_{d\geq2}\frac{(d+1)(C')^d}{d!}p^{-d+(d+1)\theta}
		\ll p^{-2+3\theta}.
		\]
		Both exponents are less than $-1$ because $\theta<1/4$.
		{
			We now make the passage from the local estimates to the global
			generating series exact.  Introduce independent degree variables
			$\mathbf z=(z_1,\ldots,z_r)$ and
			$\mathbf w=(w_1,\ldots,w_r)$, and put
			\[
			\mathfrak F_p(\mathbf z,\mathbf w)
			=\sum_{\mathbf m,\widetilde{\mathbf m}\geq0}
			K_p(A,\widetilde A)
			\prod_{i=1}^r
			\frac{(b_i(p)z_i/\sqrt p)^{m_i}}{m_i!}
			\frac{(\overline{b_i(p)}w_i/\sqrt p)^{\widetilde m_i}}
			{\widetilde m_i!},
			\]
			where $A=\sum_i m_i$ and $\widetilde A=\sum_i\widetilde m_i$.
			The three non-absolutely-summable local types found above form
			\[
			\mathfrak L_p(\mathbf z,\mathbf w)
			=\sum_{i,j=1}^r\frac{b_i(p)\overline{b_j(p)}}p z_iw_j
			+\sum_{i=1}^r\kappa_{p,1}^+\frac{b_i(p)}{\sqrt p}z_i
			+\sum_{i=1}^r\kappa_{p,1}^-\frac{\overline{b_i(p)}}{\sqrt p}w_i.
			\]
			Thus, exactly,
			$\mathfrak F_p=1+\mathfrak L_p+\mathfrak E_p$.
			
			For a power series
			$G=\sum_{\mathbf a,\mathbf b}g_{\mathbf a,\mathbf b}
			\mathbf z^{\mathbf a}\mathbf w^{\mathbf b}$ and a fixed $R>1$, write
			\[
			\|G\|_R=\sum_{\mathbf a,\mathbf b}|g_{\mathbf a,\mathbf b}|
			R^{|\mathbf a|+|\mathbf b|}.
			\]
			The preceding balanced and unbalanced estimates remain valid after
			weighting every configuration by $R$ to its total labelled degree.
			They give
			\begin{equation}\label{weightedLocalCompletionRemainder}
				\|\mathfrak E_p\|_R+\|\mathfrak L_p\|_R^2
				\ll_Rp^{-1-\delta},
				\qquad \delta=1-4\theta>0.
			\end{equation}
			Since $\|GH\|_R\leq\|G\|_R\|H\|_R$, the product
			\[
			\mathfrak H(\mathbf z,\mathbf w)
			:=\prod_{p\in\mathcal P}\left(
			\mathfrak F_p(\mathbf z,\mathbf w)
			\exp(-\mathfrak L_p(\mathbf z,\mathbf w))\right)
			\]
			has absolutely summable weighted coefficients and, for every fixed $R>1$,
			\begin{equation}\label{singleBlockAnalyticRemainderBound}
				\|\mathfrak H\|_R\ll_R1.
			\end{equation}
			Indeed,
			$\|\mathfrak F_pe^{-\mathfrak L_p}-1\|_R
			\ll_R\|\mathfrak E_p\|_R+\|\mathfrak L_p\|_R^2$.
			We have proved the exact factorization
			\begin{equation}\label{singleBlockExactCompletion}
				\prod_{p\in\mathcal P}\mathfrak F_p(\mathbf z,\mathbf w)
				=\mathfrak H(\mathbf z,\mathbf w)
				\exp\left(
				\sum_{i,j=1}^rV_{i,j}z_iw_j
				+\sum_{i=1}^rW_i^+z_i
				+\sum_{i=1}^rW_i^-w_i
				\right).
			\end{equation}
			By \Cref{coefficientKernelBookkeepingLemma}, the quadratic form is
			$\prod_{i\leq\mu}(\ell_i!)^2$ times the coefficient sum with degree
			$(\ell_i,\ell_i)$ in every fixed pair and degrees at most $(X_h,X_h)$
			in every truncated pair.
			
			First complete the truncated factors by setting their variables to $1$.
			The terms with $V_{i,j}$, $i\ne j$, and the nondistinguished $W_i^\pm$
			can be included in $\mathfrak H$: their total absolute coefficient sum
			is bounded, so \eqref{singleBlockAnalyticRemainderBound} still holds.
			For an ordinary fixed pair and $0\leq a,b\leq\ell$, a term of this
			remainder that removes degrees $a,b$ satisfies
			\begin{equation}\label{ordinaryFixedCoefficientShift}
				\frac{(\ell!)^2[z^{\ell-a}w^{\ell-b}]e^{Vzw}}{\ell!V^\ell}
				=\mathbf1_{a=b\leq\ell}\frac{\ell!}{(\ell-a)!V^a}
				\leq\mathbf1_{a=b\leq\ell}(\ell/V)^a.
			\end{equation}
			For a distinguished fixed pair, put $u=|W^+|$, $t=|W^-|$, and
			$C_{m,n}=[z^mw^n]e^{Vzw+uz+tw}$. Termwise comparison gives
			\begin{equation}\label{distinguishedFixedCoefficientShift}
				C_{\ell-a,\ell-b}
				\leq(\ell/u)^a(\ell/t)^b C_{\ell,\ell}
				\qquad(0\leq a,b\leq\ell).
			\end{equation}
			Indeed, supply the missing powers from $uz$ and $tw$; the factorial
			ratios are at most $\ell^a$ and $\ell^b$. Since $\ell\ll V$ and
			$u,t\asymp V$, a fixed radius bounds every degree change, including
			$a\ne b$. Summing against the weighted absolute coefficients of
			$\mathfrak H$ therefore costs a fixed factor.
			
			Each ordinary fixed pair contributes $\ell!V^\ell$, and each
			completed truncated pair contributes $e^V$. If $j_0>\mu$, the
			distinguished linear terms give
			$\exp\Re(W_{j_0}^++W_{j_0}^-)$. If $j_0\leq\mu$, they give
			\[
			(\ell!)^2C_{\ell,\ell}=\mathcal H_\ell(V;W^+,W^-).
			\]
			Thus the completed expression satisfies the respective asserted bound;
			write $\mathcal M$ for that bound.
			
			To restore all truncations, let $E$ be any subset of their $z$- and
			$w$-variables, and let $\mathcal Q_E$ be the contribution where each
			selected degree exceeds its cutoff. Leave the other truncated variables
			at $1$. Cauchy's formula on circles of fixed radius $R>2$ has tail
			kernel bounded by $R^{-X_h}/(R-1)$. The preceding coefficient
			comparisons hold uniformly on these circles, and give
			\[
			|\mathcal Q_E|\leq C\mathcal M
			\prod_{(h,\pm)\in E}\exp(-X_h\log R+C_RS_h),\qquad
			S_h=V_{\mu+h}+|W_{\mu+h}^+|+|W_{\mu+h}^-|+1.
			\]
			Here $+$ and $-$ indicate the $z$- and $w$-degree respectively.
			Choose $C_1\log R\geq2C_R$. Since $X_h\geq C_1S_h$,
			inclusion--exclusion bounds the truncated expression by
			\[
			C\mathcal M\prod_{h=1}^{\nu}(1+e^{-X_h\log R/2})^2
			\ll\mathcal M,
			\]
			because $\nu$ is fixed. This restores both degrees of every truncated
			factor, preserving the signed distinguished linear terms, and proves
			both assertions.
		}
	\end{proof}
	
	\begin{proof}[Proof of
		\Cref{multiBlockCombinatorialLemma}]
		{Give each label $\gamma=(a,h)$ its own variables
			$z_\gamma,w_\gamma$.  Define $\mathfrak F_p,\mathfrak L_p,\mathfrak E_p$
			as in the single-block proof, using only the labels on the block containing
			$p$, and the same weighted coefficient sum $\|\cdot\|_R$.
			The estimate \eqref{weightedLocalCompletionRemainder} is uniform in
			$h$, since each prime belongs to one block with at most $r_0$ labels.
			Since $\sum_p p^{-2+4\theta}<\infty$, we obtain one global bound
			\[
			\mathfrak H=\prod_p(\mathfrak F_pe^{-\mathfrak L_p}),
			\qquad \|\mathfrak H\|_R\ll_R1,
			\]
			with a constant independent of $H$, and exactly
			\begin{equation}\label{multiblockExactEulerCompletion}
				\prod_p\mathfrak F_p
				=\mathfrak H\exp\left(
				\sum_{\gamma,\delta}V_{\gamma,\delta}z_\gamma w_\delta
				+\sum_\gamma W_\gamma^+z_\gamma
				+\sum_\gamma W_\gamma^-w_\gamma\right).
			\end{equation}
			
			Complete every truncated family by setting
			$z_\gamma=w_\gamma=1$ for $\gamma\in\mathfrak T$.  The
			truncated--truncated quadratic exponent becomes
			\[
			\sum_{\gamma\in\mathfrak T}V_\gamma
			+\sum_{a\ne b}\sum_{\substack{h:(a,h),(b,h)\in\mathfrak T}}
			V_{(a,h),(b,h)}.
			\]
			Its off-diagonal part is $O(1)$ by
			\eqref{multiblockAggregateCovariance}.  Likewise
			\eqref{multiblockAggregateLinear} makes all completed
			non-distinguished linear terms $O(1)$ after the block sum, while the
			distinguished terms give
			$W_{\mathfrak T,0}^++W_{\mathfrak T,0}^-$.  Off-diagonal terms involving a fixed power have bounded sum of
			absolute coefficients by the second condition in
			\eqref{multiblockAggregateCovariance}, as do the remaining
			nondistinguished linear terms by \eqref{multiblockAggregateLinear}.
			
			Extract degree $(\ell_\gamma,\ell_\gamma)$ in each fixed pair.
			The comparisons \eqref{ordinaryFixedCoefficientShift} and
			\eqref{distinguishedFixedCoefficientShift} bound every change of these
			degrees when expanding the remainder and the bounded cross terms.
			Their weighted absolute coefficient sum is bounded independently of
			$H$, so there is a single fixed factor for all the fixed pairs.
			The resulting bound is the right-hand side of
			\eqref{multiblockTwistedConclusion}; denote it by $\mathcal M$.
			We restore the truncations simultaneously. For any set $E$ of truncated
			sides $(\gamma,+)$ or $(\gamma,-)$, let $\mathcal Q_E$ be the
			contribution where each selected $z_\gamma$- or $w_\gamma$-degree
			exceeds $X_\gamma$. Apply Cauchy's formula to the selected variables
			on circles of fixed radius $R>\max\{2,e^{c_1}\}$, leaving the other
			truncated variables at $1$. Each tail kernel is bounded by
			$R^{-X_\gamma}/(R-1)$. Changing the selected variables changes the
			diagonal, cross, and linear terms by at most
			\[
			C_R\sum_{(\gamma,\pm)\in E}
			\left(V_\gamma+|W_\gamma^+|+|W_\gamma^-|+1+
			\sum_{\delta\ne\gamma}(|V_{\gamma,\delta}|+|V_{\delta,\gamma}|)\right)
			=C_R\sum_{(\gamma,\pm)\in E}S_\gamma.
			\]
			The fixed-degree comparisons and the remainder bound are uniform on
			all these circles. Consequently
			\[
			|\mathcal Q_E|\leq C\mathcal M
			\prod_{(\gamma,\pm)\in E}
			\exp(-X_\gamma\log R+C_RS_\gamma),
			\]
			where $C$ is independent of $E$ and $H$. Unmarked variables remain
			at $1$, so we keep the cancellations in their aggregate sums.
			Choose $C_1(\log R-c_1)\geq C_R$. Since $X_\gamma\geq C_1S_\gamma$,
			inclusion--exclusion bounds the fully truncated expression by
			\[
			C\mathcal M\prod_{\gamma\in\mathfrak T}(1+e^{-c_1X_\gamma})^2
			\leq C\mathcal M\exp\left(2\sum_{\gamma\in\mathfrak T}
			e^{-c_1X_\gamma}\right)\ll\mathcal M,
			\]
			using \eqref{multiblockCutoffConditions}. This proves
			\eqref{multiblockTwistedConclusion}.
			Taking $K_p(A,B)=\mathbf1_{A=B}$ removes all linear terms. Use
			\eqref{ordinaryFixedCoefficientShift} for every fixed pair; no
			distinguished linear-term assumption is then needed. The quadratic form is
			$\sum_n|a(n)|^2/n$, so this proves \eqref{multiblockCoefficientBound}.}
	\end{proof}
	
	{\section{Rankin--Selberg prime sums}\label{appendixRankinSelberg}
		\begin{prop}\label{rankinSelbergPrimeSumsProposition}
			Let $L_1,\ldots,L_r$ be the fixed tuple from Section~1.
			With the usual omission of ramified primes, there are
			$\eta_0=\eta_0(\boldsymbol L)>0$ and constants $C_{ab}$ such that
			\begin{align}
				\sum_{p\leq x}\lambda_a(p)\overline{\lambda_b(p)}\log p
				&=\delta_{ab}x+
				O_{\boldsymbol L}\left(\frac{x}{(\log x)^{\eta_0}}\right),
				\label{rankinSelbergQuantitativePrimeTheorem}\\
				\sum_{p\leq x}\frac{\lambda_a(p)\overline{\lambda_b(p)}}p
				&=\delta_{ab}\log\log x+C_{ab}
				+O_{\boldsymbol L}\left(\frac1{(\log x)^{\eta_0}}\right)
				\label{rankinSelbergQuantitativeMertens}
			\end{align}
			uniformly for $x\geq3$.  For the prime-block endpoints of
			Section~\ref{holderReductionsSection}, put
			\[
			R_{ab,\ell}=
			\sum_{T_{\ell-1}<p\leq T_\ell}
			\frac{\lambda_a(p)\overline{\lambda_b(p)}}p
			-\delta_{ab}(\log\log T_\ell-\log\log T_{\ell-1}).
			\]
			Then
			\begin{equation}\label{rankinSelbergBlockUniformity}
				\sum_{\ell=2}^{\mathscr J}|R_{ab,\ell}|
				\ll_{\boldsymbol L,r,\widehat Y}1.
			\end{equation}
		\end{prop}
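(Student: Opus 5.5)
The plan is to derive both estimates from the standard prime number theorem for Rankin--Selberg $L$-functions $L(s,\pi_a\times\widetilde\pi_b)$, and then to obtain the block bound \eqref{rankinSelbergBlockUniformity} from the quantitative Mertens estimate together with the very rapid growth of the block endpoints $T_\ell$.

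\textbf{Step 1: the prime number theorem \eqref{rankinSelbergQuantitativePrimeTheorem}.} For each pair $(a,b)$ the coefficient $\lambda_a(p)\overline{\lambda_b(p)}$ is the prime coefficient of $L(s,\pi_a\times\widetilde\pi_b)$ at unramified $p$. These are $\zeta$ or Dirichlet $L$-functions in the $\mathrm{GL}(1)\times\mathrm{GL}(1)$ case, $\mathrm{GL}(2)$ twists by characters in the mixed case, and genuine $\mathrm{GL}(2)\times\mathrm{GL}(2)$ Rankin--Selberg $L$-functions otherwise.

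1. Each of these $L$-functions is entire except for a simple pole at $s=1$, which occurs exactly when $\pi_a\cong\pi_b$. Since the $L_a$ are distinct primitive objects, this means exactly when $a=b$.
2. Each has a standard zero-free region of de la Vallée Poussin type. The possible exceptional (Siegel) zero is harmless here, because the tuple is fixed: it contributes only a term $x^{\beta_1}$ with $\beta_1<1$ fixed, which is absorbed into the error.
3. The standard explicit-formula argument (Iwaniec--Kowalski Theorem 5.13, or Liu--Ye for Rankin--Selberg) then gives
\[
\sum_{n\le x}\Lambda_{a\times\widetilde b}(n)=\delta_{ab}x+O\bigl(x e^{-c\sqrt{\log x}}\bigr).
\]
4. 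To pass from $\Lambda_{a\times\widetilde b}$ to the prime sum, remove the prime powers $p^k$ with $k\ge2$. This costs $O(\sqrt x\log^2x)$, using $|\lambda(p)|\le2$ from Deligne and Deligne--Serre. The finitely many ramified primes are also removed.

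This yields \eqref{rankinSelbergQuantitativePrimeTheorem} with any $\eta_0>0$, and in fact with a much stronger error term.

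\textbf{Step 2: the Mertens estimate \eqref{rankinSelbergQuantitativeMertens}.} Apply partial summation with weight $1/(p\log p)$. Write $\theta_{ab}(x)=\delta_{ab}x+E(x)$. The main term produces $\delta_{ab}\log\log x$ plus a constant. The error term produces
\[
\int_x^\infty \frac{|E(u)|}{u^2\log u}\,du\ll(\log x)^{-\eta_0},
\]
which shows that $C_{ab}$ exists and gives the stated tail.

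\textbf{Step 3: the block bound \eqref{rankinSelbergBlockUniformity}.} Apply \eqref{rankinSelbergQuantitativeMertens} at both endpoints of each block. The constants $C_{ab}$ cancel, so
\[
|R_{ab,\ell}|\ll(\log T_{\ell-1})^{-\eta_0}.
\]
The first block ($\ell=2$, with left endpoint $T_1=\exp(2\widehat Y^2)$) contributes $O_{\widehat Y}(1)$. For $\ell\ge3$ we have
\[
\log T_{\ell-1}=\frac{\log T}{\widehat Y(\log_{\ell-1}T)^2}\ge \frac{x_{\ell-2}}{\widehat Y x_{\ell-1}^2}.
\]
Since $x_{\ell-2}=e^{x_{\ell-1}}$, this is $\gg_{\widehat Y} e^{x_{\ell-1}/2}$. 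Hence
\[
\sum_{\ell\ge3}(\log T_{\ell-1})^{-\eta_0}\ll\sum_\ell e^{-\eta_0x_{\ell-1}/2}.
\]
The $x_\ell$ decrease super-exponentially as $\ell$ increases, and all of them are at least $\Lambda\ge1$. Comparing with the tower, the terms form an essentially geometric (indeed faster) sequence: going from the last term backwards, each $x$ at least doubles. The sum is therefore bounded by a constant depending only on $\eta_0$.

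\textbf{Expected obstacle.} The only genuine issue is Step 1 in the $\mathrm{GL}(2)\times\mathrm{GL}(2)$ case with nebentypus and weight one. There one needs the analytic properties of $L(s,f_a\times\widetilde f_b)$: it is entire when $f_a\not\cong f_b$, has a simple pole when $f_a\cong f_b$, and is nonvanishing on $\Re s=1$ with a standard zero-free region. I would quote these from Shahidi's nonvanishing theorem together with Moreno's and Sarnak's zero-free regions. The weak $(\log x)^{-\eta_0}$ error stated in the proposition suggests that a Wiener--Ikehara or Tauberian-type input suffices, which makes this step routine.
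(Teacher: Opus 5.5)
Your argument is correct in outline, but Step~1 takes a different route from the paper. Steps~2 and~3 essentially agree with it. In Step~3 you use the tower growth of $\log T_{\ell-1}$ to get a rapidly convergent sum. The paper instead bounds every later endpoint error crudely by $(\log T_2)^{-\eta_0}$ and uses $\mathscr J(\log T_2)^{-\eta_0}\ll1$. Either works.

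\textbf{The paper's Step~1.} It never uses a de la Vall\'ee Poussin region. It combines:
\begin{itemize}
\item nonvanishing on $\Re s=1$ (Sarnak);
\item Brumley's \emph{narrow} zero-free region, of width $\gg U^{-B}$ up to height $U$;
\item a smoothed Perron formula at the short height $U=(\log x)^{\xi}$ with $\xi B<1$, de-smoothed via Brun--Titchmarsh.
\end{itemize}
This is why the statement only claims a $(\log x)^{-\eta_0}$ saving. The advantage is that it applies to every Rankin--Selberg pair without any functoriality input.

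\textbf{Your Step~1.} It yields the much stronger error $x e^{-c\sqrt{\log x}}$. However, it needs a standard region for $L(s,f_a\times\widetilde f_b)$ with $f_a\neq f_b$, including weight one and nebentypus.
\begin{itemize}
\item That region is available. The clean source is Ramakrishnan's $\mathrm{GL}(2)\times\mathrm{GL}(2)\to\mathrm{GL}(4)$ functoriality. Up to finitely many Euler factors, it factors this $L$-function into standard $L$-functions of cuspidal representations of $\mathrm{GL}(m)$, $m\leq4$. Each of these has the classical region apart from possible exceptional real zeros, which are fixed here and hence harmless.
\item The citations you propose do not by themselves give this. Shahidi's theorem is only nonvanishing on $\Re s=1$. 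The effective regions of Sarnak and Brumley are narrower than de la Vall\'ee Poussin.
\end{itemize}

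\textbf{Your closing remark should go.} A Wiener--Ikehara-type Tauberian theorem gives only $o(x)$ with no rate. Some effective width of zero-free region is indispensable: Step~3 sums endpoint errors over $\mathscr J\to\infty$ blocks, and an error with no rate need not give a bounded sum.
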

		
		\begin{proof}
			Let $\pi_a$ be the unitary automorphic representation attached to $L_a$.
			At an unramified prime, write
			\[
			L_p(s,\pi_a)=\prod_{u=1}^{d_a}
			(1-\alpha_{a,u}(p)p^{-s})^{-1}.
			\]
			Here $|\alpha_{a,u}(p)|=1$: this is elementary in degree one,
			and follows from Deligne \cite{Deligne} or Deligne--Serre
			\cite{DeligneSerre} for holomorphic forms.
			The Rankin--Selberg local factor is
			\[
			L_p(s,\pi_a\times\widetilde\pi_b)
			=\prod_{u=1}^{d_a}\prod_{v=1}^{d_b}
			(1-\alpha_{a,u}(p)\overline{\alpha_{b,v}(p)}p^{-s})^{-1}.
			\]
			Consequently $-L'/L(s,\pi_a\times\widetilde\pi_b)$ has prime coefficient
			$\lambda_a(p)\overline{\lambda_b(p)}\log p$.
			The higher prime powers contribute $O(\sqrt x\log^2x)$ to the
			corresponding summatory function.  The finitely many ramified primes
			also contribute a power-saving error.
			
			The representations in the tuple are pairwise non-isomorphic.
			Thus $L(s,\pi_a\times\widetilde\pi_b)$ has a simple pole at $1$
			exactly when $a=b$, and is otherwise holomorphic there
			\cite[Sections~5.12--5.13]{IwaniecKowalski}; it is non-vanishing on
			$\Re(s)=1$ \cite{Sarnak1}.
			Brumley's narrow zero-free region
			\cite[Corollary~6]{Brumley}, applied to this fixed tuple, gives
			a zero-free width $\gg U^{-B}$ up to height $U$.
			In the smoothed Perron argument for $-L'/L$, take
			$U=(\log x)^\xi$ with $\xi B<1$.  The shifted vertical integral then
			has a factor $\exp(-c(\log x)^{1-\xi B})$, while the truncation error
			is $O(x/U)$.
			Choose $0<\eta_0<\xi$.  Bounded prime coefficients and
			Brun--Titchmarsh on the transition intervals permit de-smoothing,
			giving \eqref{rankinSelbergQuantitativePrimeTheorem}.
			The constants are uniform in $a,b$ because the tuple is fixed.
			
			Partial summation gives \eqref{rankinSelbergQuantitativeMertens}.
			For \eqref{rankinSelbergBlockUniformity}, the first block costs $O(1)$,
			and all later endpoint errors are
			$O((\log T_2)^{-\eta_0})$.  Their sum is bounded because
			$\mathscr J(\log T_2)^{-\eta_0}\ll1$.
		\end{proof}
	}
	

\begin{thebibliography}{0}
		\bibitem{AkbaryFodden1} A. Akbary, B. Fodden, \emph{Lower bounds for power moments of {$L$}-functions}, Acta Arith. {\bf 151} no. 1 (2012), 11--38.
		
		\bibitem{AndersenThorner} {N. Andersen, J. Thorner, \emph{Zeros of $\mathrm{GL}_2$ $L$-functions on the critical line}, Forum Math. {\bf 33} no. 2 (2021), 477--491.}
		
		\bibitem{ArguinBailey1} L-P. Arguin, E. Bailey, \emph{Large deviation estimates of {S}elberg's {C}entral {L}imit
			{T}heorem and applications}, Int. Math. Res. Not. IMRN no. 23 (2023), 20574--20612. 
		
		\bibitem{ArguinBailey2} L.P. Arguin, E. Bailey, \emph{Lower bounds for the large deviations of {S}elberg's central limit theorem}, Mathematika {\bf 71} no. 1 (2025).
		
		\bibitem{BettinBuiLiRadziwill} S. Bettin, H.M. Bui, X. Li and M. Radziwi\l\l, \emph{A quadratic divisor problem and moments of the {R}iemann zeta-function}, J. Eur. Math. Soc. (JEMS) {\bf 22} no. 12 (2020), 3953--3980.
		
		\bibitem{BettinChandeeRadziwill} S. Bettin, V. Chandee, M. Radziwi\l\l, \emph{The mean square of the product of the {R}iemann zeta-function with {D}irichlet polynomials}, J. Reine Angew. Math. {\bf 729} (2017), 51--79.
		
		\bibitem{BombHej1} E. Bombieri, D.A. Hejhal, \emph{On the distribution of zeros of linear combinations of Euler products}, Duke Math. J. {\bf 80} no.3 (1995), 821--862.
		
		\bibitem{BlomerHarcos} {V. Blomer, G. Harcos, \emph{The spectral decomposition of shifted convolution sums}, Duke Math. J. {\bf 144} no. 2 (2008), 321--339.}
		
		\bibitem{BlomerHarcosErratum} {V. Blomer, G. Harcos, \emph{Twisted $L$-functions over number fields and Hilbert's eleventh problem}, Geom. Funct. Anal. {\bf 20} no. 1 (2010), 1--52; erratum dated November 1, 2013.}
		
		\bibitem{Brumley} {F. Brumley, \emph{Effective multiplicity one on $\mathrm{GL}(n)$ and narrow zero-free regions for Rankin--Selberg $L$-functions}, Amer. J. Math. {\bf 128} no. 6 (2006), 1455--1474.}
		
		\bibitem{ConreyFarmerKeatingRubinsteinSnaith} J.B. Conrey, D.W. Farmer, J.P. Keating, M.O. Rubinstein, N.C. Snaith, \emph{Integral moments of {$L$}-functions}, Proc. London Math. Soc. (3) {\bf 91} no. 1 (2005), 33--104.
		
		\bibitem{ConreyGonek1} J.B. Conrey and S.M. Gonek, \emph{High moments of the {R}iemann zeta-function}, Duke Math. J. {\bf 107} no. 3 (2001), 577--604.
		
		\bibitem{Deligne} P. Deligne, \emph{La conjecture de {W}eil. {I}}, Inst. Hautes \'Etudes Sci. Publ. Math. {\bf 43} (1974), 273--307.
		
		\bibitem{DeligneSerre} {P. Deligne and J.-P. Serre, \emph{Formes modulaires de poids $1$}, Ann. Sci. \'{E}cole Norm. Sup. (4) {\bf 7} no. 4 (1974), 507--530.}
		
		\bibitem{DurkanKarakMahatab} B. Durkan, N. Karak, K. Mahatab, \emph{Sharp lower bounds for shifted moments of Dedekind zeta functions}, preprint, \href{https://arxiv.org/abs/2609.01101}{arxiv:2609.01101}.
		
		\bibitem{Gabriel} {R. M. Gabriel, \emph{Some results concerning the integrals of moduli of regular functions along certain curves}, J. London Math. Soc. {\bf 2} (1927), 112--117.}
		
		\bibitem{GonekHughesKeating1} S.M. Gonek, C.P. Hughes, J.P. Keating, \emph{A hybrid {E}uler-{H}adamard product for the {R}iemann zeta function}, Duke Math. J. {\bf 136} no. 3 (2007), 507--549.
		
		\bibitem{GunKumarThakur} S. Gun, G. Kumar, D. Thakur, \emph{Lower {B}ounds for {M}oments of $L$-functions}, preprint, \href{https://arxiv.org/abs/2608.30061}{arxiv:2608.30061}.
		
		\bibitem{Hagen1} M.V. Hagen, Sharp conditional moment bounds for products of
		$L$-functions, Q. J. Math. {\bf 76} no. 2 (2025), 683--713.
		
		\bibitem{HardyLittlewood1} G.H. Hardy and J.E. Littlewood, \emph{Contributions to the theory of the {R}iemann zeta-function and the theory of the distribution of primes}, Acta Math. {\bf 41} no. 1 (1916), 119--196.
		
		\bibitem{Harper1} A. Harper, \emph{Sharp conditional bounds for moments of the Riemann zeta function}, preprint, \href{https://arxiv.org/abs/1305.4618}{arxiv:1305.4618}.
		
		\bibitem{Heap2} W. Heap, \emph{Moments of the {D}edekind zeta function.}
		PhD thesis, University of York (2013).
		
		\bibitem{Heap1} W. Heap, \emph{The twisted second moment of the {D}edekind zeta function of a quadratic field}, Int. J. Number Theory {\bf 10} no. 1 (2014), 235--281.
		
		\bibitem{Heap3} W. Heap, \emph{On the splitting conjecture in the hybrid model for the Riemann zeta function}, Forum Mathematicum {\bf 35}, no.2 (2023), 329--362.
		
		\bibitem{HeapLi} W. Heap and J. Li, \emph{Simultaneous extreme values of zeta and {$L$}-functions}, Math. Ann. {\bf 390} no. 4 (2024), 6355--6397.
		
		\bibitem{HeapRadSound1} W. Heap, M. Radziwi\l\l\, and K. Soundararajan, \emph{Sharp upper bounds for fractional moments of the {R}iemann zeta function}, Q. J. Math. {\bf 70} no. 4 (2019), 1387--1396.
		
		\bibitem{HeapSound1} W. Heap and K. Soundararajan, \emph{Lower bounds for moments of zeta and {$L$}-functions revisited}, Mathematika {\bf 68} no.1 (2022), 1--14.
		
		\bibitem{Heath-Brown1}{D. R. Heath-Brown, \emph{Fractional moments of the {R}iemann zeta-function}, J. London Math. Soc. (2) {\bf 24} no. 1 (1981), 65--78.}
		
		\bibitem{Ingham1} A.E. Ingham, \emph{Mean-{V}alue {T}heorems in the {T}heory of the {R}iemann {Z}eta-{F}unction}, Proc. London Math. Soc. (2) {\bf 27} no. 4 (1927), 273--300.
		
		\bibitem{Inoue1} S. Inoue, \emph{On the logarithm of the {R}iemann zeta function and its
			iterated integrals}, Ann. Sc. Norm. Super. Pisa Cl. Sci. (5) {\bf 27} no. 1 (2026), 491--533.
		
		\bibitem{InoueLi1} S. Inoue and J. Li, \emph{Joint value distribution of {$L$}-functions on the critical line}, preprint, \href{https://arxiv.org/abs/2102.12724}{arxiv:2102.12724}.
		
		\bibitem{IwaniecKowalski} H. Iwaniec and E. Kowalski, \emph{Analytic number theory}, American Mathematical Society Colloquium Publications {\bf 53}, American Mathematical Society, Providence, RI (2004).
		
		\bibitem{KeatingSnaith1} J.P. Keating and N.C. Snaith, \emph{Random matrix theory and {$\zeta(1/2+it)$}}, Comm. Math. Phys. {\bf 214} no. 1 (2000), 57--89.
		
		\bibitem{KimSarnak} H. Kim and P. Sarnak, \emph{Appendix: Refined estimates towards the Ramanujan and Selberg conjectures}, J. Amer. Math. Soc. {\bf 16} no. 1 (2003), 175--181.
		
		\bibitem{MilinoButter1} M. Milinovich, C. Turnage-Butterbaugh, \emph{Moments of the product of automorphic $L$-functions}, J. Number Theory {\bf 139} (2014), 175--204.
		
		\bibitem{Motohashi1} Y. Motohashi, \emph{A note on the mean value of the {D}edekind zeta-function of the quadratic field}, Math. Ann. {\bf 188} (1970), 123--127.
		
		\bibitem{Rad1} M. Radziwi\l\l, \emph{Large deviations in Selberg’s central limit theorem}, preprint, \href{https://arxiv.org/abs/1108.5092}{arxiv:1108.5092}. 
		
		\bibitem{RS1} M. Radziwi\l\l, K. Soundararajan, \emph{Continuous lower bounds for moments of zeta and {$L$}-functions}, Mathematika {\bf 59} no. 1 (2013), 119--128.	
		
		\bibitem{RS2} M. Radziwi\l\l, K. Soundararajan, \emph{Moments and distribution of central $L$-values of quadratic twists of elliptic curves}, Invent. Math. {\bf 202} no. 3 (2015), 1029--1068.
		
		\bibitem{Rane1} V. V. Rane, \emph{On an approximate functional equation for {D}irichlet {$L$}-series}, Math. Ann. {\bf 264} no. 2 (1983), 137--145.
		
		\bibitem{Sarnak1} P. Sarnak, \emph{Nonvanishing of {$L$}-functions on {$\mathfrak{R}(s)=1$}}, Contributions to automorphic forms, geometry, and number
		theory, Johns Hopkins Univ. Press, Baltimore, MD (2004), 719--732.
		
		\bibitem{Sel1} A. Selberg, \emph{Old and new conjectures and results about a class of {D}irichlet series}, Collected Papers, Vol. II, Springer-Verlag, Berlin, 1991, 47--63.
		
		\bibitem{Sono1} K. Sono, \emph{Continuous lower bounds for the moments of {D}edekind zeta-functions}, J. Number Theory {\bf 188} (2018), 335--356.
		
		\bibitem{Sound1} K. Soundararajan, \emph{Moments of the Riemann zeta function}, Annals of Math. {\bf 170} no. 2 (2009), 981--993. 
		
		\bibitem{Sourmelidis1} A. Sourmelidis, \emph{Joint extreme values of {$L$}-functions on and off the critical line}, preprint, \href{https://arxiv.org/abs/2605.03665}{arxiv:2605.03665}.
		
		\bibitem{Titchmarsh} E. C. Titchmarsh, \emph{The theory of the {R}iemann zeta-function}, 2nd ed., The Clarendon Press, Oxford University Press, New York, 1986.
		
		
	\end{thebibliography}
\end{document}